\title{Accessible hyperbolic components in anti-holomorphic dynamics}
\author{Hiroyuki Inou \and Tomoki Kawahira}
\newtheorem{thm}{Theorem}[section]
\newtheorem{lem}[thm]{Lemma}
\newtheorem{cor}[thm]{Corollary}
\theoremstyle{definition}
\newtheorem{defn}[thm]{Definition}
\newtheorem*{ackn}{Acknowledgement}
\theoremstyle{remark}
\newtheorem{rem}[thm]{Remark}
\DeclareMathOperator{\Int}{Int}
\newcommand{\C}{\mathbb{C}}
\newcommand{\D}{\mathbb{D}}
\newcommand{\R}{\mathbb{R}}
\newcommand{\Z}{\mathbb{Z}}
\newcommand{\cH}{\mathcal{H}}
\newcommand{\cM}{\mathcal{M}}
\newcommand{\cR}{\mathcal{R}}
\newcommand{\cU}{\mathcal{U}}
\DeclareMathOperator{\im}{Im}
\DeclareMathOperator{\Koenigs}{Koe}
\DeclareMathOperator{\KoenigsInv}{Poi}
\DeclareMathOperator{\Boettcher}{B\ddot ot}
\DeclareMathOperator{\Fatou}{Fat}
\newcommand{\FatouInv}{\Psi} 
\DeclareMathOperator{\rep}{rep}
\DeclareMathOperator{\attr}{attr}
\DeclareMathOperator{\diam}{diam}
\DeclareMathOperator{\area}{Area}
\begin{document}

\maketitle

\begin{abstract}
  The tricorn, the connectedness locus of the anti-holomorphic quadratic family, is known to be non-locally connected.
  The boundary of every hyperbolic component of odd period contains arcs that are inaccessible from the complement of the tricorn. As the period increases, the decorations become more and more complicated, and it seems natural to think that every hyperbolic component of sufficiently large and odd period is inaccessible.
  Contrary to this expectation, we show that the tricorn contains infinitely many hyperbolic components that are accessible from the complement.
\end{abstract}

\section{Introduction}

The Mandelbrot set $\cM$ is the connectedness locus of the (holomorphic) quadratic family $Q_c(z)=z^2 + c$, i.e., the set of parameters $c$ for which the filled Julia set
\[
  K(Q_c) = \{z \in \C \mid \{Q_c^n(z)\}_{n \ge 0}\text{: bounded}\}
\]
is connected.
For the anti-holomorphic quadratic family $f_c(z)=\bar{z}^2+c$, the connectedness locus $\cM^*=\{c \in \C \mid K(f_c)\text{: connected}\}$ is called the \emph{tricorn} \cite{MR1181083} or the \emph{Mandelbar set} \cite{MR1020441}.

Milnor \cite{MR1181083} found tricorn-like sets in the parameter space of real cubic polynomials, and presented the tricorn as a prototype of them.
In fact, tricorn-like sets naturally appear in the parameter loci of families of (holomorphic) rational maps having symmetry by an anti-holomorphic involution.
For example, Bonifant, Buff and Milnor \cite{MR3772619} studied cubic antipode-preserving maps and found some tricorn-like sets, and Lodge and Mukherjee \cite{MR4211202} studied real degree four Newton maps and showed the existence of invisible tricorn-like sets.
The dynamics of some anti-holomorphic correspondences, called Schwarz reflections, have been intensively studied recently and some tricorn-like structures are discussed (see \cite{MR4706575}, \cite{Lee:2018aa} and \cite{MR4250610}).

Since the second iterate of anti-holomorphic dynamics is holomorphic,
every anti-holomorphic dynamics can be regarded as a subfamily of some family of holomorphic dynamics.
However, it is not a holomorphic subfamily; the second iterate of an anti-holomorphic family depends just real analytically on parameters. For example, $f_c^2(z) = (z^2+\bar{c})^2+c$ contains both $c$ and $\bar{c}$, so it is a real two-parameter family of holomorphic quartic polynomials.
Hence such a one-complex-parameter family of anti-holomorphic dynamics can inherit many wild structures in a holomorphic two-dimensional family, which do not appear in holomorphic one-dimensional families.
From this point of view, such a family can be considered as a ``1.5-dimensional family'' and it acts as a stepping stone for understanding higher-dimensional families.

It is well-known that $\cM$ is connected and full \cite{MR762431},
and the famous \emph{MLC conjecture} asserts that $\cM$ is locally connected.
If the MLC conjecture holds, then by Carath\'eodory 's theorem, $\partial \cM$ is a quotient of the circle and hence every point in the boundary is the landing point of some parameter ray.
In particular, every point in $\partial \cM$ is \emph{accessible} from the complement of $\cM$; that is, there exists a path in $\C \setminus \cM$ which converges to that point.

On the contrary, it is known that $\cM^*$ is not locally connected, although it is connected and full \cite{MR1235477}.
Indeed, there are real-analytic arcs of positive length in $\partial \cM^*$ which consist of inaccessible points from the complement \cite{hubbard_multicorns_2014}.
Such arcs are contained in the boundary of \emph{hyperbolic components} of odd period. Moreover, it is also known that every parameter ray for $\cM^*$ accumulating to a hyperbolic component of odd period at least three does not converge to a point \cite{MR3502067}.

Decorations of $\cM^*$ accumulating to a hyperbolic component of odd period become more and more complicated as the period increases, so it seems natural to think that such a hyperbolic component has no accessible boundary point if the period is sufficiently large (see the decorations in Figure~\ref{fig:inaccessible} and Figure~\ref{fig:umbilical cord}).
Hubbard and Schleicher discussed the existence of an ``\emph{open beach}'' (a boundary arc of $\mathcal{M}^*$ with no decoration. See \S2 for more detail) in \cite{hubbard_multicorns_2014} and remarked as follows:
\textit{We do not know whether there are infinitely many parabolic arcs for which this occurs}.

In fact, the accessibility of a boundary point $c$ is related to the condition that the basin of infinity, when projected into the repelling Ecalle cylinder of the periodic point for $f_c$, contains a horizontal circle (we give a sufficient condition in Theorem~\ref{thm:accessibility}).
However, since the projection of the basin of infinity into the Ecalle cylinder is an annulus, whose modulus is reciprocal to the period of the hyperbolic component, it is unlikely to contain a horizontal circle when the period is large.

In this article, we prove that this speculation is in fact incorrect:
\begin{thm}
  \label{thm:main}
  There exist infinitely many hyperbolic components of odd periods in $\Int\cM^*$ which are accessible from $\C \setminus \cM^*$.
\end{thm}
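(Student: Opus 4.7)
The plan is to apply Theorem~\ref{thm:accessibility} to reduce the problem to verifying, for infinitely many odd periods $p$, the following geometric condition: there exists a hyperbolic component $H$ of period $p$ and a parabolic parameter $c_0 \in \partial H$ such that the projection of the basin of infinity of $f_{c_0}$ into the repelling Ecalle cylinder of the parabolic cycle contains a horizontal circle. The introduction already flags why this is subtle: the projected annulus has modulus of order $1/p$, so for large $p$ it is very thin. The content of the proof must be that ``thin'' does not preclude ``wrapping around''; one can arrange the annulus to cover a full horizontal period of the cylinder at some height, even though its overall conformal modulus tends to $0$.

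A first step is to make the horizontal-circle condition quantitative. At a parabolic parameter $c$ of odd period $p$, I would identify the repelling Ecalle cylinder with $\C/\Z$ via the Fatou coordinate of $f_c^{2}$, and compute the lifts to the Fatou coordinate of those external rays of $\cM^*$ that land at the parabolic cycle. The projected basin of infinity then becomes a union of vertical strips in $\C/\Z$, and contains a horizontal circle precisely when, at some common height $y_0 \in \R$, the real parts of these strips cover all of $\R/\Z$. This turns the analytic condition into a combinatorial one involving the external addresses of rays landing at the parabolic cycle together with their Fatou heights.

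The second step is the construction of the family. A natural mechanism is anti-holomorphic renormalisation of odd period: starting from a ``model'' parabolic parameter $c_*$ for which the horizontal-circle condition holds with room to spare, one looks for a sequence $c_n \in \partial \cM^*$ of parabolic parameters whose dynamics near the parabolic cycle is hybrid-equivalent to that of $f_{c_*}$, with renormalisation periods $p_n \to \infty$ chosen so that the resulting overall periods remain odd (for instance by taking both base and renormalisation periods odd). The hybrid equivalence transports the ray structure from $c_*$ to $c_n$ up to quasi-conformal distortion, so if this distortion can be controlled uniformly in $n$, then the horizontal-circle condition is inherited at every stage.

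The main obstacle is this uniform control. The straightening theorem for (anti-)holomorphic polynomial-like maps yields hybrid equivalences whose dilatation depends on the moduli of the polynomial-like annuli, and these moduli can degenerate with the depth of renormalisation. One would need either to single out a combinatorial subfamily where the moduli stay bounded below, or to supplement the hybrid-equivalence argument with direct geometric estimates on the Fatou coordinate of $f_{c_n}^{2}$ and the landing behaviour of the external rays landing at its parabolic cycle. I expect the technical core of the argument to lie in this uniform lateral control of the projected basin across $n$, together with a careful count verifying that the scheme genuinely produces hyperbolic components of infinitely many distinct odd periods.
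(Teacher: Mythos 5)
Your reduction to Theorem~\ref{thm:accessibility} and the general idea of using anti-holomorphic renormalisation along a sequence of odd periods $p_n \to \infty$ matches the skeleton of the paper. But the mechanism you propose for verifying the ``horizontal circle'' condition --- transferring it from a model parameter $c_*$ through the hybrid equivalence of the renormalisation, with uniform control of the dilatation --- does not and cannot work, and this is precisely where the real content of the theorem lies.

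The obstruction is that the hybrid equivalence of the polynomial-like restriction only controls the \emph{small} filled Julia set, i.e.\ the part of $K(f_{c_n})$ inside the polynomial-like domain. The set you must show avoids the line of critical Ecalle height in the repelling Fatou coordinate is the \emph{entire} $K(f_{c_n})$, and the dangerous part is the decorations outside the small copy, about which the hybrid equivalence says nothing. Moreover, you correctly observe that the projected annulus has modulus $\asymp 1/p$; this means the candidate horizontal circle shrinks with $n$, so ``inheriting the condition with room to spare'' across infinitely many levels via a uniformly bounded dilatation is not possible even in principle. The paper sidesteps this entirely: it does not need the $\varepsilon$ of Theorem~\ref{thm:accessibility} to be uniform in $n$, only positive for each $n$, so the modulus collapse is irrelevant as long as the error terms collapse faster.

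What the paper actually does is far more specific. It takes the sequence $c_n \to \hat{c} = -2$ (anti-Chebyshev), so that the scaling limit of $f_{c_n+\rho_n(t)}^{N}$ recovers the original family $\bar z^2 + s$ with \emph{exponential} error (Theorem~\ref{thm:scaling limit}), and it tunes the renormalisation to be hybrid equivalent to $f_{\omega/4}$, whose critical Ecalle height is $0$ and whose repelling Fatou coordinate sends $\R$ onto the external ray of angle $1/3$ (Lemma~\ref{lem:c=1/4}). The decorations are controlled not by the hybrid equivalence but by Rivera-Letelier's semi-hyperbolic estimate (Theorem~\ref{thm:RL}) at $\hat c = -2$, combined with the Yoccoz puzzle structure: at puzzle depth $m$ the decorations sit near arguments $l\pi/2^m$ (Lemma~\ref{lem:argument}), and angle $2\pi/3$ is never a dyadic multiple of $\pi$, so the ray misses the decorations by a definite amount. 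The final accounting is a race between an exponentially small error $O(\Lambda^{-\mu n})$ (from the q.c.\ B\"ottcher and Fatou estimates, Lemmas~\ref{lem:Boettcher} and \ref{lem:Fatou convergence}) and the $j_n = O(\log n)$ iterations needed to push the fundamental arc out to a definite puzzle depth ($2^{j_n} = O(n)$); the accumulated error $O(n\Lambda^{-\mu n})$ still tends to $0$. None of these ingredients --- the choice of $-2$, the role of semi-hyperbolicity, the dyadic ray geometry, the exponential-versus-logarithmic race --- appear in your proposal, and without something playing their role the ``uniform lateral control'' you flag as the technical core remains unproved.
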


Since the second iterate $f_c^2(z) = (z^2+\bar{c})^2+c$ is holomorphic in $z$,
the situation depends on the parity of the period.
For example, if a periodic point is of odd period, then its multiplier (as a periodic point for $f_c^2$) is always non-negative real.
Thus the boundary of a hyperbolic component of odd period consists of maps with a parabolic periodic point.
In fact, the boundary arcs consist of three \emph{cusps} and three arcs connecting them, which are called \emph{parabolic arcs}
(see \cite{MR1020441}, \cite{MR2020986} and \cite{Mukherjee:2014ab} for more details).
When the period is even, the situation looks similar to the case of the Mandelbrot set, except possibly the root is blown up to an arc with a cusp. Such a blown-up arc is a common boundary of two hyperbolic components, one of which has twice the period of the other.

The existence of parabolic arcs for odd period hyperbolic components is the source of complicated topological structure such as non-path connectivity \cite{hubbard_multicorns_2014}, \cite{MR4302166}, non-landing parameter rays \cite{MR3502067} and failure of self-similarity \cite{MR4302166} (see also \cite{10.1093/imrn/rnaa365}).

For the Mandelbrot set, the set of parabolic parameters of a given period is discrete.
Hence if a sequence of holomorphic quadratic polynomials has the property that the multiplier of the periodic point of a given period converges to one, then the sequence converges.
However, since a parabolic arc for the tricorn consists of parameters having a parabolic periodic point of the same period with multiplier one, such a sequence does not always converge for the anti-holomorphic quadratic family.

More precisely, a parabolic arc is a quasiconformal conjugacy class, and it is parametrized by the \emph{critical Ecalle height}.
Hence, when a sequence or an arc converges to a parabolic arc from outside of the hyperbolic component, the accumulation set corresponds to that of the critical Ecalle heights. In particular, if the critical Ecalle height does not converge,
it does not converge to a point. Such a non-convergence yields ``wiggly'' features of parameter rays, the ``umbilical cords'', and so on.

In fact, non-path connectivity and failure of self-similarity are proved by the existence of ``wiggly umbilical cords''; that is, the continuum connecting the origin to a given hyperbolic component of odd period (``the umbilical cord'') does not converge to a point. The only landing umbilical cords are those on the real line up to symmetry, and this shows that the natural dynamical correspondence (the \emph{straightening map}) between the unions of hyperbolic components and parts of the decorations is not continuous.

Similar to the fact that every parabolic parameter in the Mandelbrot set has exactly one or two parameter rays landing at it, for every parabolic arc in the tricorn, only one or two external angles accumulate it.
Here readers should notice that external rays are defined in terms of the natural real-analytic diffeomorphism between $\C \setminus \cM^*$ and $\C \setminus \overline{\D}$, which is not holomorphic \cite{MR1235477},
thus Carath\`eodory's theorem does not hold.

Accessibility and inaccessibility of hyperbolic components also shows that decorations of hyperbolic components of odd periods are topologically different in general (see Figure~\ref{fig:accessible} and Figure~\ref{fig:inaccessible}).
\begin{figure}[th]
  \centering
  \includegraphics[width=5cm]{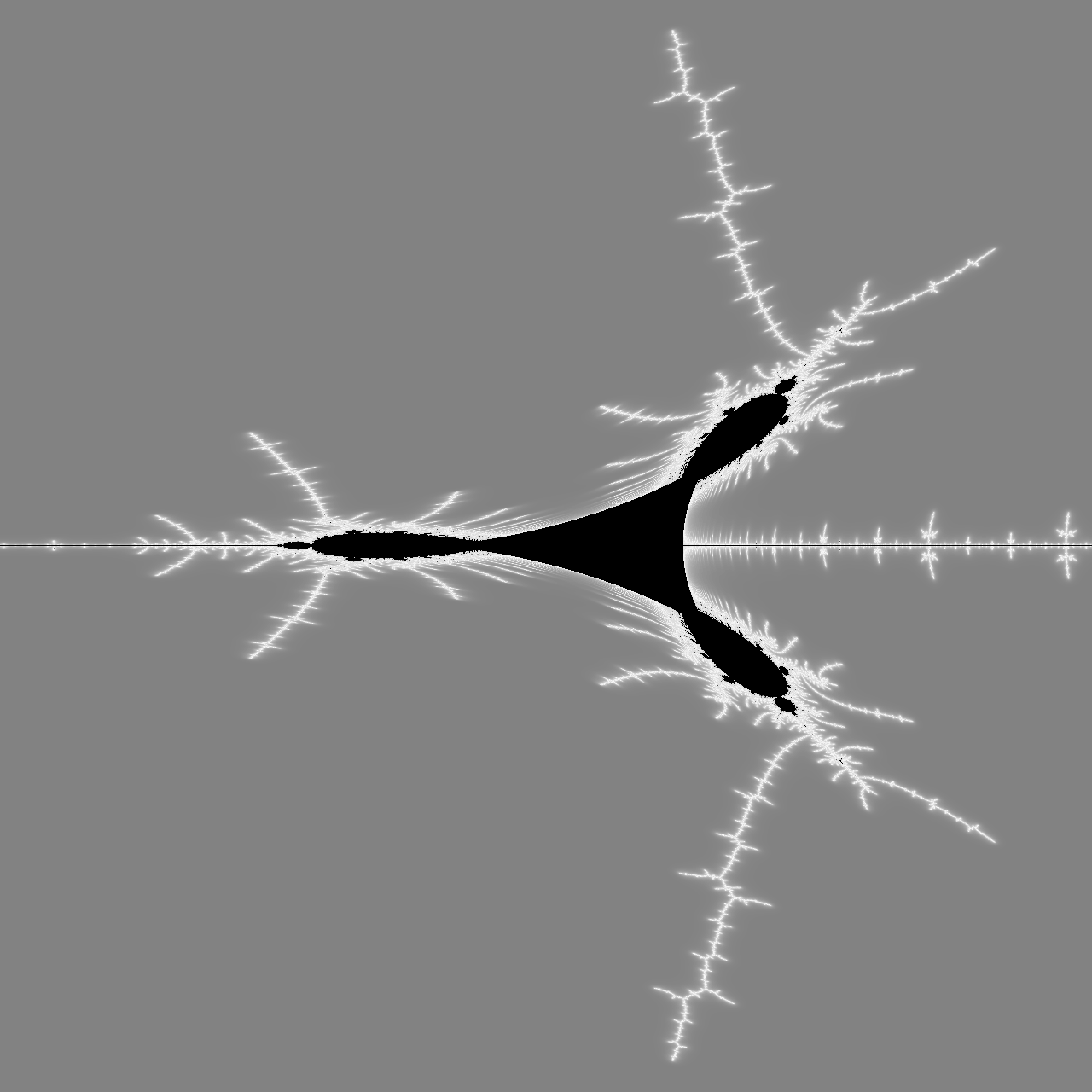}

  \smallskip

  \includegraphics[width=10cm]{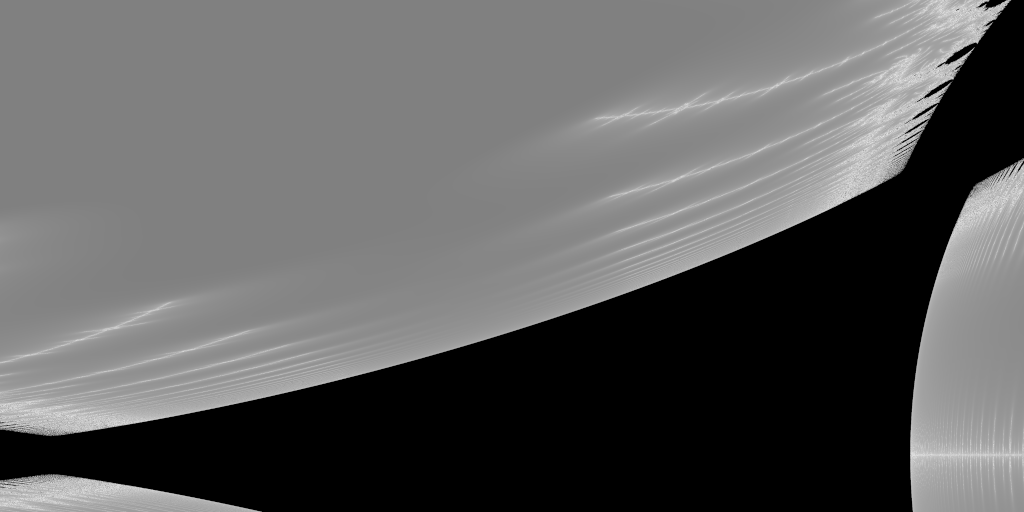}

  \caption{An accessible hyperbolic component of period 3 (centered at the airplane).}
  \label{fig:accessible}
\end{figure}

\begin{figure}[th]
  \centering
  \includegraphics[width=5cm]{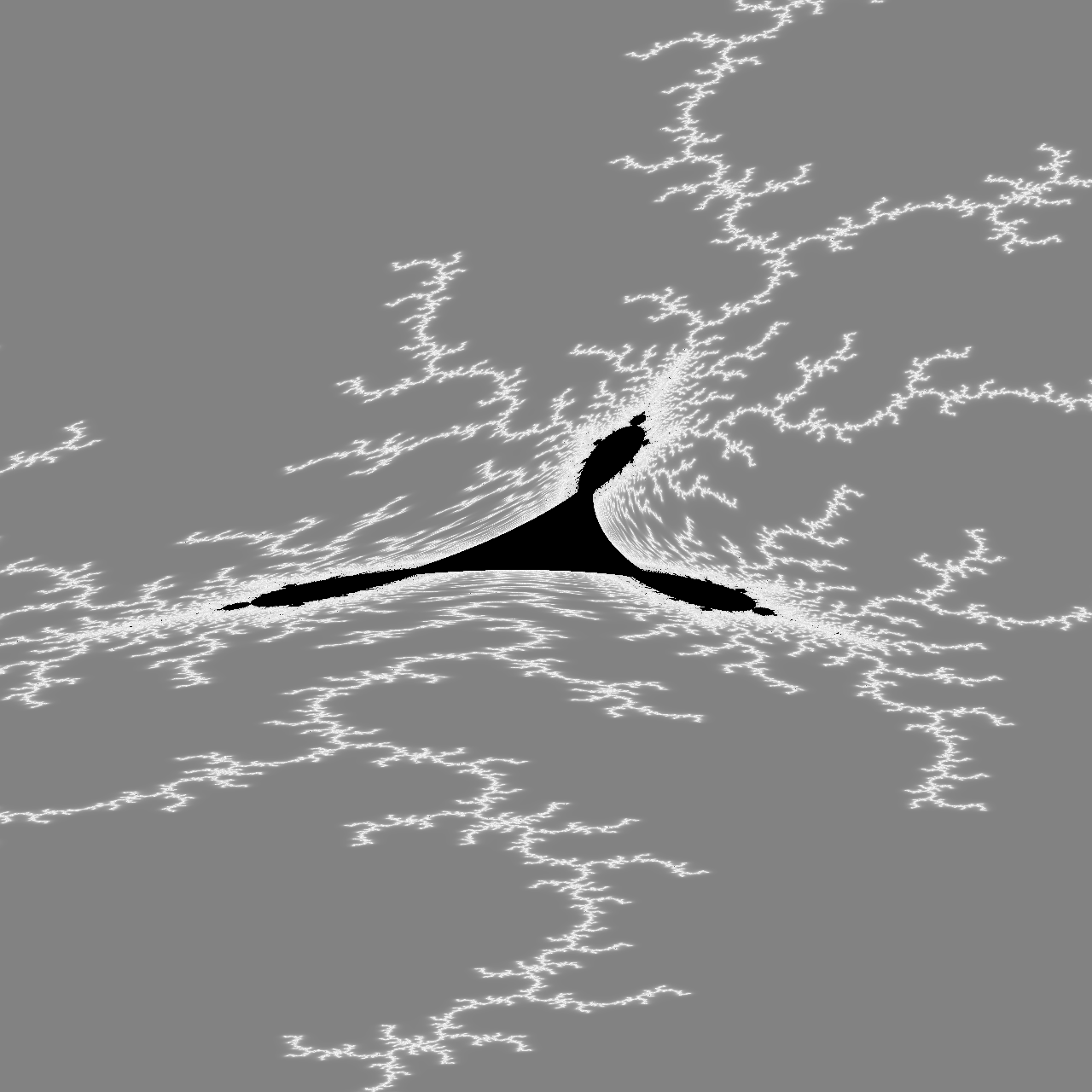}
  \includegraphics[width=5cm]{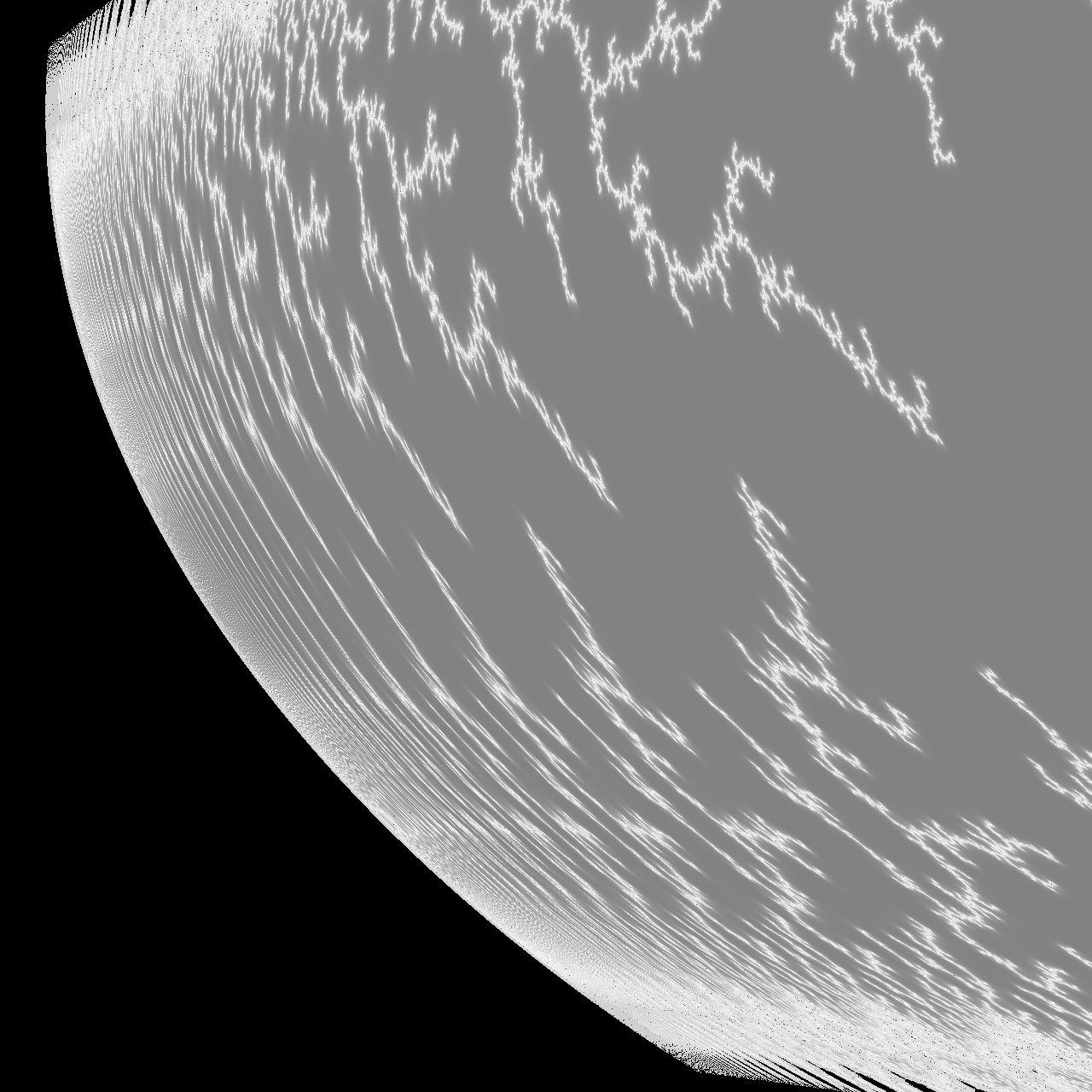}
  \includegraphics[width=5cm]{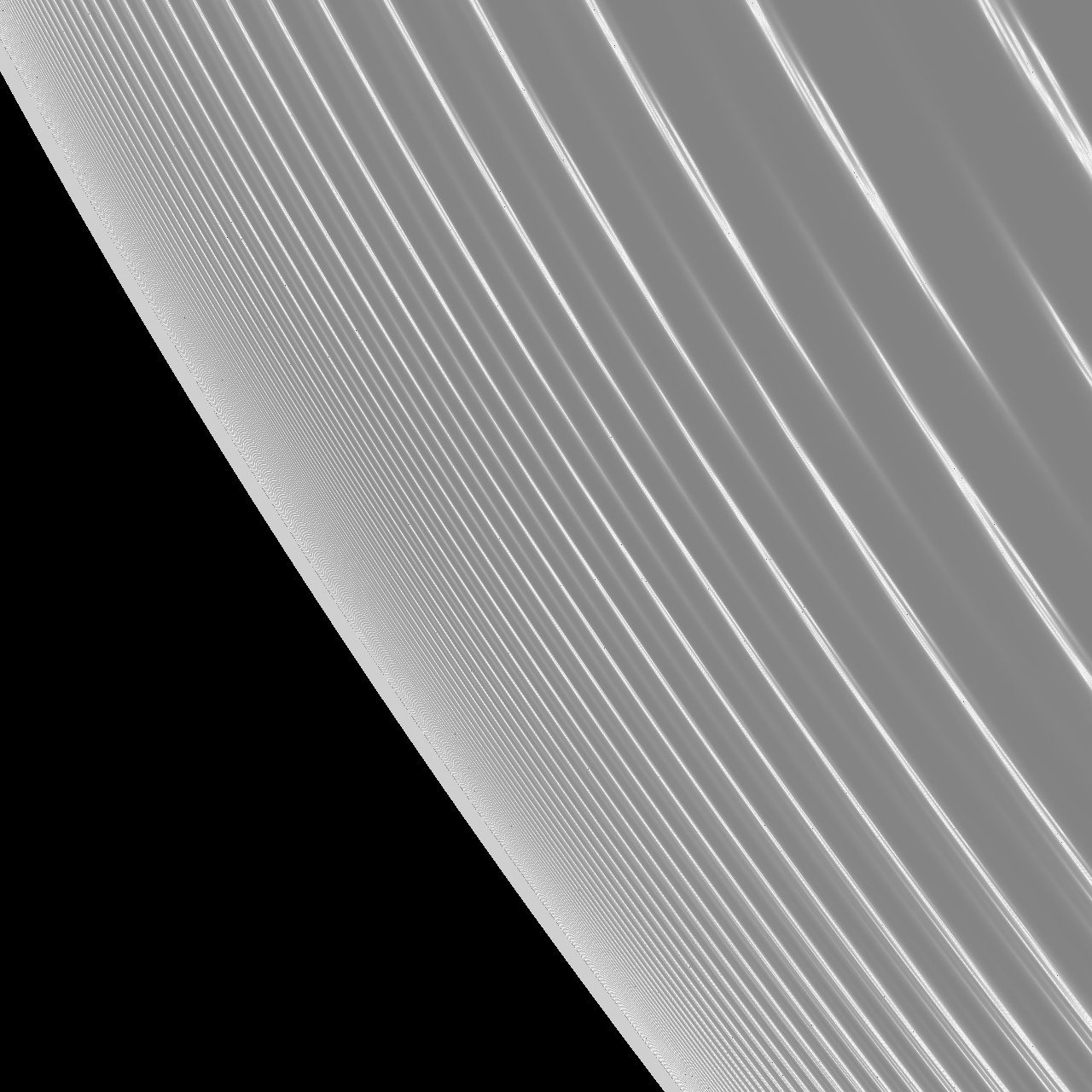}

  \caption{A hyperbolic component of period 9 which seems inaccessible.}
  \label{fig:inaccessible}
\end{figure}

The accessible hyperbolic components in Theorem~\ref{thm:main} are centered at real parameters and accumulates to the anti-Chebyshev map $f_{-2}$.
We can numerically see the decorations of the ``baby tricorn-like set'' centered at such a hyperbolic component converge to truncated dyadic parameter rays of the tricorn.
Also in the phase space for a parameter in the baby tricorn-like set,
the decorations attached to the small Julia set are also converge to truncated dyadic dynamical rays (see Figure~\ref{fig:decorations}).
\begin{figure}[th]
  \centering
  \usetikzlibrary{arrows.meta}
  \begin{tikzpicture}[every node/.style={inner sep=0,outer sep=0},
    pre/.style={<-,shorten <=2pt,>={Stealth[round]},semithick}]
    \node at (-0.75/4*5,0) {\includegraphics[width=5cm]{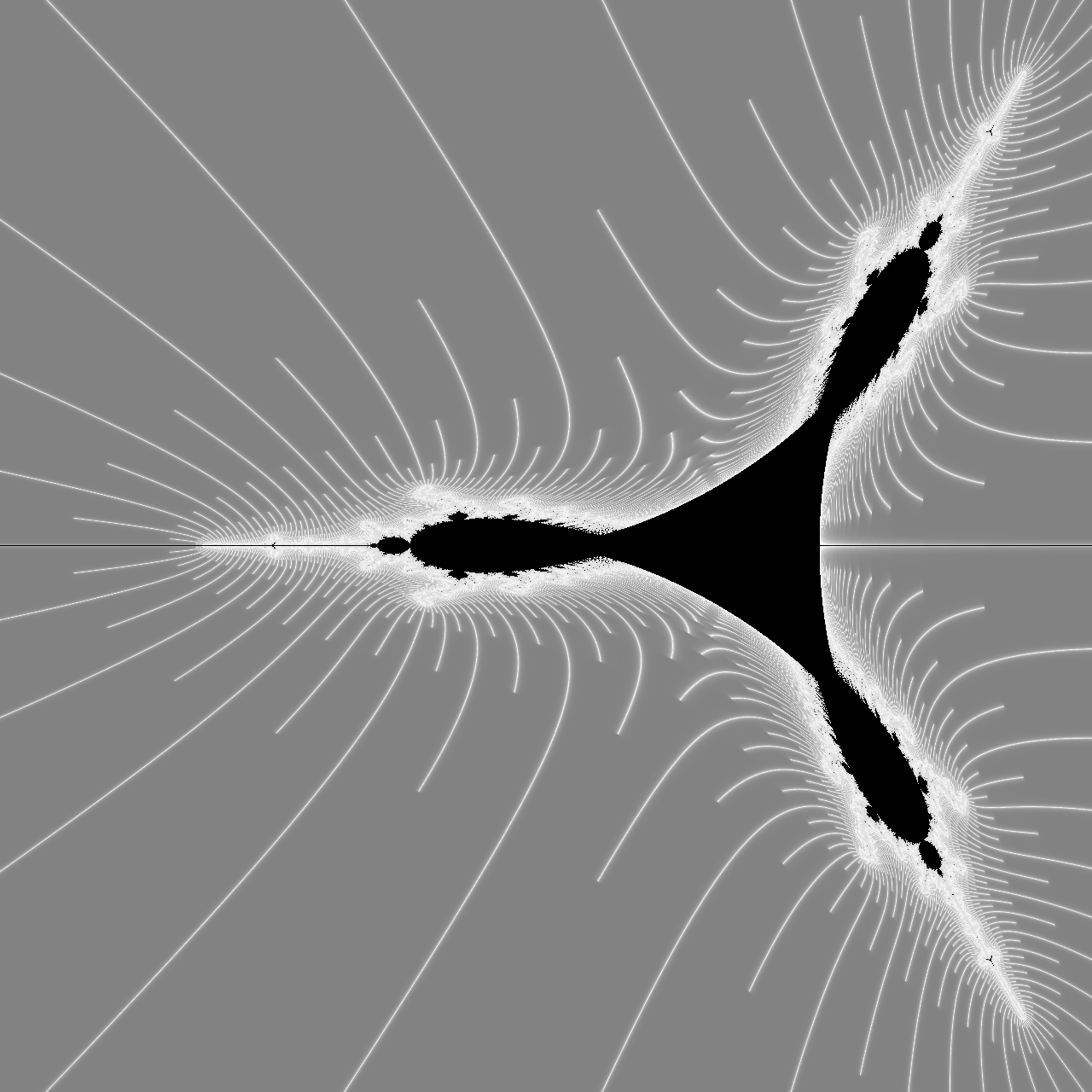}};
    \coordinate (O) at (0,0);
    \coordinate (A) at (-0.25*0.5/4*5,{0.25*sqrt(3)/2/4*5})
    edge [pre,white] (O);
    \fill [red] (A) circle (2pt);
  \end{tikzpicture}
  \includegraphics[width=5cm]{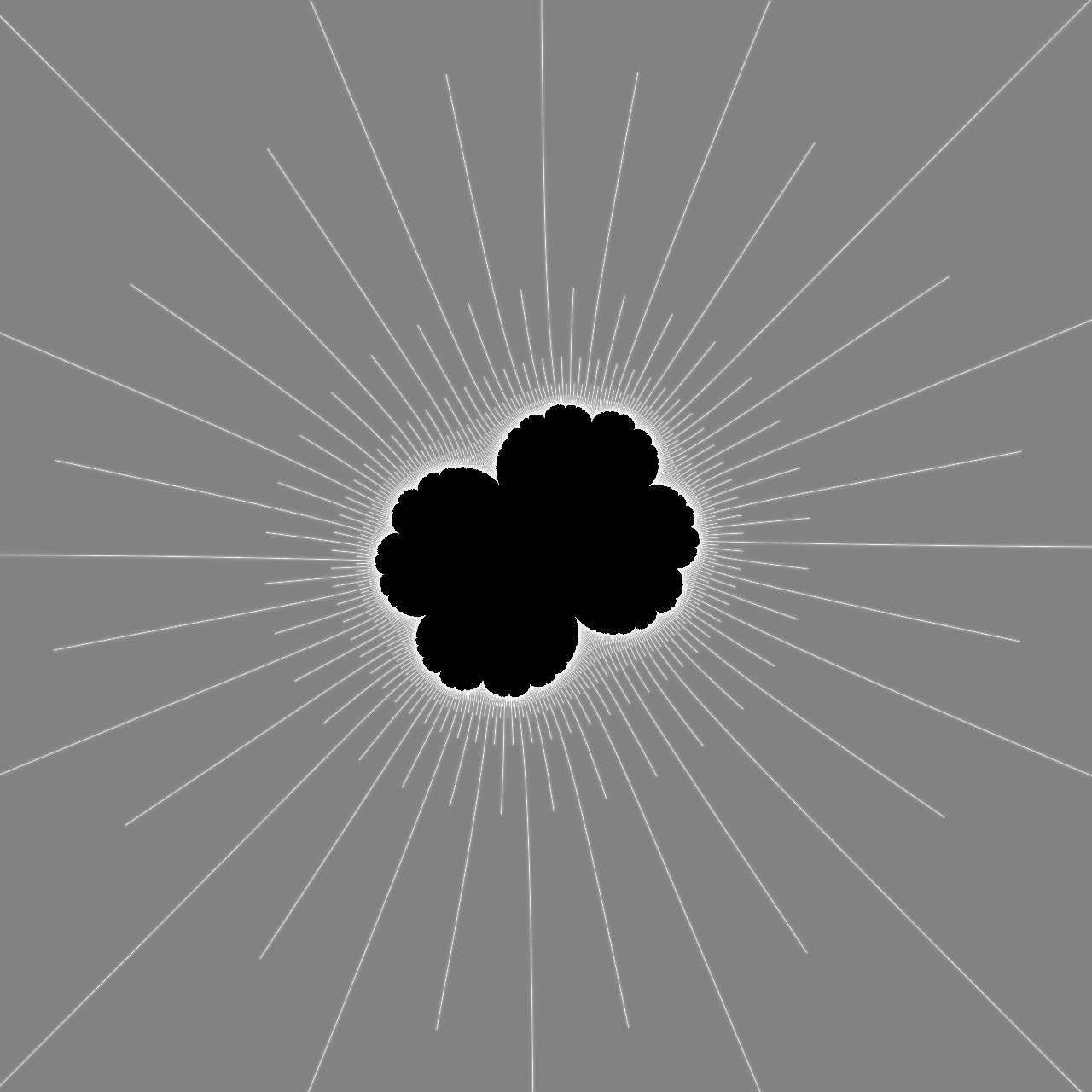}

  \usetikzlibrary{arrows.meta}
  \begin{tikzpicture}[every node/.style={inner sep=0,outer sep=0},
    pre/.style={<-,shorten <=2pt,>={Stealth[round]},semithick}]
    \node at (-0.75/4*5,0) {\includegraphics[width=5cm]{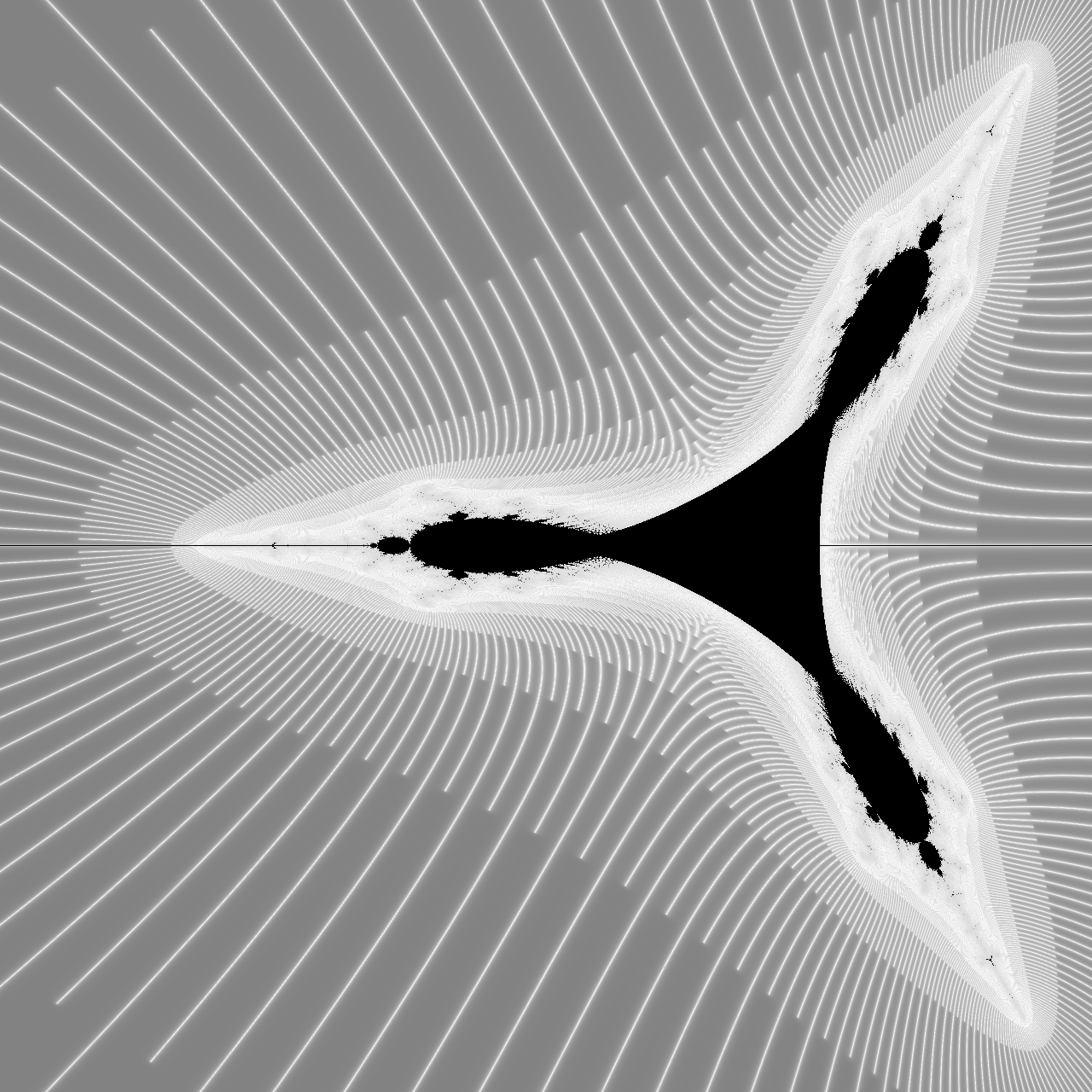}};
    \coordinate (O) at (0,0);
    \coordinate (A) at (-0.25*0.5/4*5,{0.25*sqrt(3)/2/4*5})
    edge [pre,white] (O);
    \fill [red] (A) circle (2pt);
  \end{tikzpicture}
  \includegraphics[width=5cm]{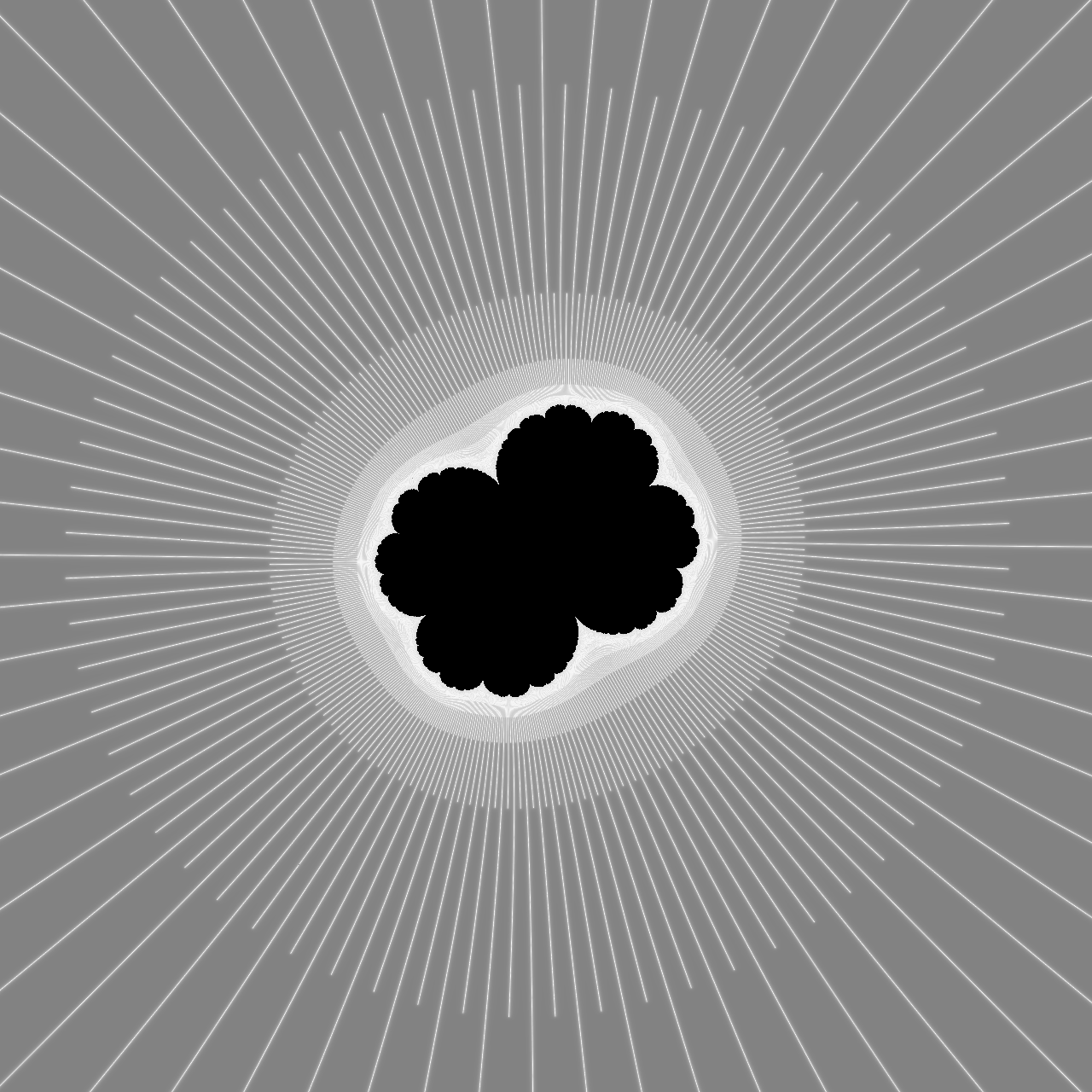}

  \usetikzlibrary{arrows.meta}
  \begin{tikzpicture}[every node/.style={inner sep=0,outer sep=0},
    pre/.style={<-,shorten <=2pt,>={Stealth[round]},semithick}]
    \node at (-0.75/4*5,0) {\includegraphics[width=5cm]{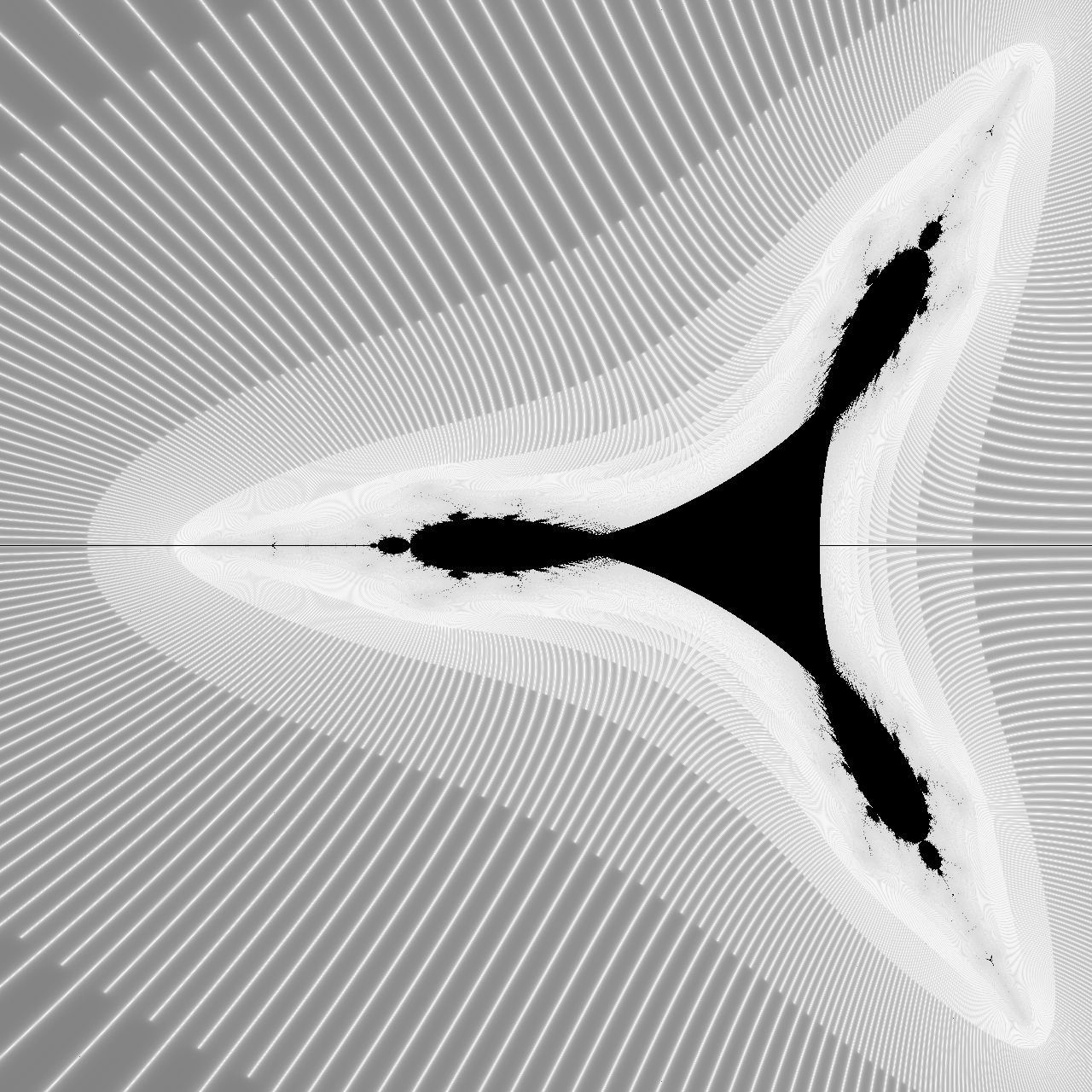}};
    \coordinate (O) at (0,0);
    \coordinate (A) at (-0.25*0.5/4*5,{0.25*sqrt(3)/2/4*5})
    edge [pre,white] (O);
    \fill [red] (A) circle (2pt);
  \end{tikzpicture}
  \includegraphics[width=5cm]{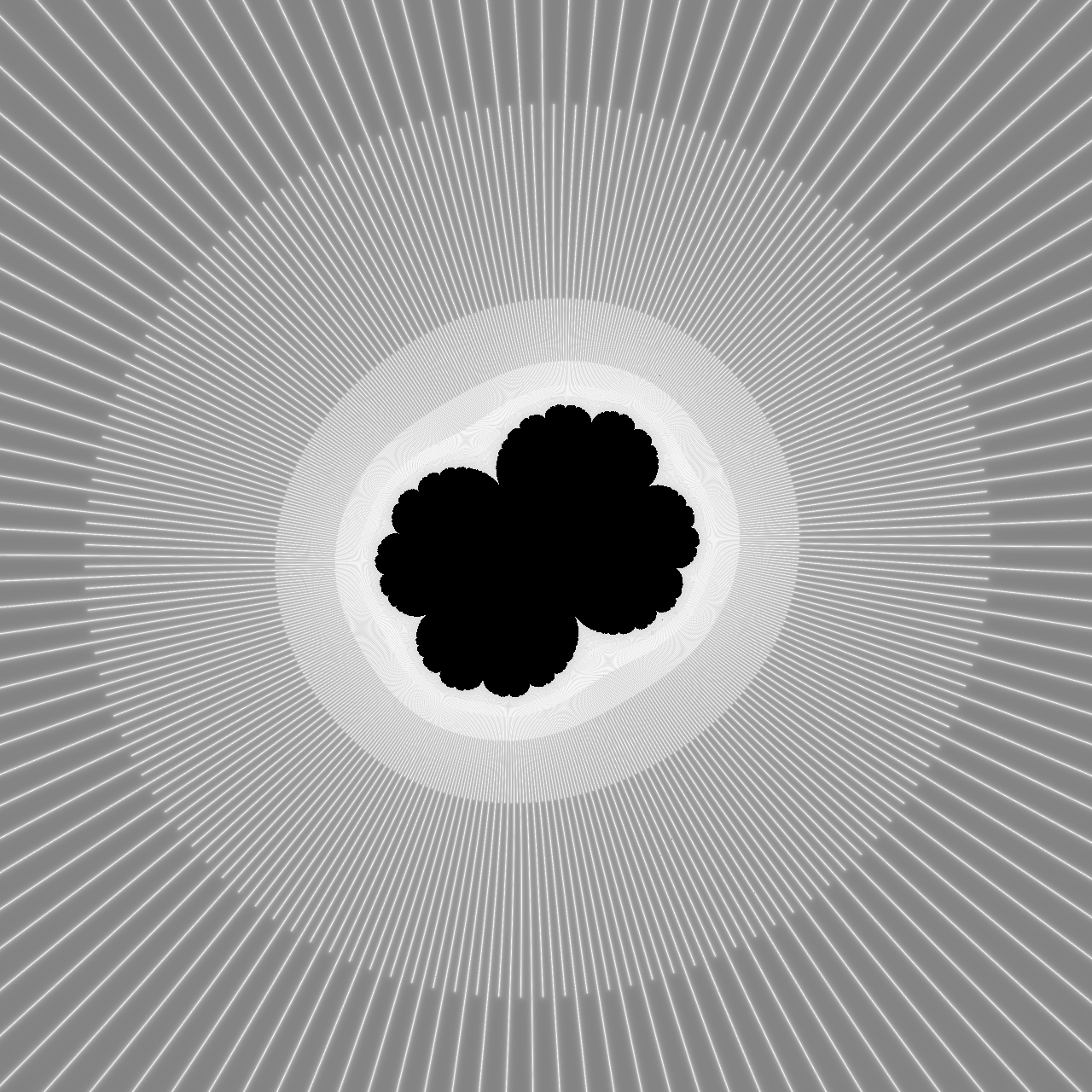}

  \caption{Decorations of baby tricorn-like sets (rescaled to 5:9) and corresponding Julia sets at parabolic parameters (indicated by the dots and arrows) for period 15 (top), 55 (middle), 105 (bottom).}
  \label{fig:decorations}
\end{figure}
We in fact use such a convergence in the phase space to prove the accessibility.


The structure of the paper is as follows: In Section~\ref{sec:preliminaries},
we recall basic facts on the dynamics of anti-holomorphic quadratic polynomials,
especially on parabolic maps and their bifurcations,
and using Fatou coordinate, we prove a sufficient condition for accessibility (Theorem~\ref{thm:accessibility}) in terms of the \emph{critical Ecalle height}.

In Section~\ref{sec:scaling limit}, we prove that there exists a sequence of critically finite parameters $c_n \to \hat{c} = -2$, such that up to scaling,
the family $\{f_{c_n+t}^{2n+3}(z)\}$ converges to the original family $\{f_t(z)\}$ locally uniformly (Theorem~\ref{thm:scaling limit}) on the whole complex plane.
This is a simple modification of the classical fact for the holomorphic case (see \cite{MR810493}, \cite{MR816367} and \cite{MR1765082}).

To show the exponential convergence of decorations to truncated dyadic rays as $c_n \to \hat{c}$,
we need a concrete construction for hybrid conjugacies of renormalizations;
we give precise constructions of polynomial-like restrictions in Section~\ref{sec:poly-like} and hybrid conjugacies in Section~\ref{sec:straightening}.
In Section~\ref{sec:qc estimate}, we show the hybrid conjugacies just constructed converge to the identity as $n \to \infty$ exponentially fast.
In Section~\ref{sec:Boettcher}, we define a ``quasiconformal B\"ottcher coordinate'' for renormalizations and show that it converges exponentially to the identity in an fundamental annulus.
In Section~\ref{sec:approx J} we apply a result by Rivera-Letelier \cite{MR1880905} on convergence of filled Julia sets at semihyperbolic map to show
that the decorations of the small filled Julia set
converge to (truncated) dyadic dynamical rays exponentially fast.

In Section~\ref{sec:Fatou estimate}, we show the exponential convergence of the Fatou coordinate for a specific parameter,
which is necessary to check the sufficient condition in Theorem~\ref{thm:accessibility}.

Finally in Section~\ref{sec:proof}, we gather all the estimates to show the accessibility.
The key fact is that an escaping orbit grows super-exponentially for renormalizations (quadratic-like maps).
After rescaling, the renormalization hybrid equivalent to a given quadratic polynomial has a domain that grows exponentially, and converges exponentially to the polynomial.
This implies that for a given point, the escape time from the region of renormalization grows logarithmically, hence the argument (or the external angle) expands only linearly. Therefore, this does not affect exponential convergence
(see Lemma~\ref{lem:argument} and Corollary~\ref{cor:argument bound} for more detail).

\begin{ackn}
  The authors are grateful to the anonymous referees for their thoughtful comments and suggestions.
  The first author is partially supported by JSPS KAKENHI Grant Numbers JP26400115 and JP18K03367.
  The second author is partially supported by JSPS KAKENHI Grant Number JP19K03535.
\end{ackn}

\section{Preliminaries}
\label{sec:preliminaries}

The purpose of this section is to recall some basic notions and facts on anti-holomorphic dynamics, and to give a sufficient condition for accessibility (Theorem~\ref{thm:accessibility})
by discussing accessibility of hyperbolic components of odd period and parabolic bifurcations in anti-holomorphic dynamics.

\subsection{The tricorn}

We denote $K_c = K(f_c)$ be the \emph{filled Julia set} of $f_c$ and $J_c = \partial K_c$ be the \emph{Julia set} of $f_c$.

Let $\Boettcher_c$ be the B\"ottcher coordinate for $f_c$; i.e.,
the conformal isomorphism defined near infinity such that
\begin{align*}
  \Boettcher_c(f_c(z)) & = \overline{\Boettcher_c(z)}^2,
                       &
  \lim_{z \to \infty} \frac{\Boettcher_c(z)}{z} = 1.
\end{align*}
Just as the same as the holomorphic case, we can extend the domain of definition of the B\"ottcher coordinate by the functional equation above,
and we may assume the following:
\begin{itemize}
  \item $\Boettcher_c: (\C \setminus K_c) \to (\C \setminus \overline{\D})$ is a well-defined conformal isomorphism  when $c \in \cM^*$,
        where $\D$ is the unit disk;
  \item $\Boettcher_c(c)$ is well-defined when $c \not \in \cM^*$.
\end{itemize}

\begin{thm}[Nakane \cite{MR1235477}]
  The map
  \[
    \begin{array}{rccc}
      \Phi^*: & (\C \setminus \cM^*) & \to     & (\C \setminus \overline{\D}) \\
              & c                  & \mapsto & \Boettcher_{f_c}(c)
    \end{array}
  \]
  is a real-analytic diffeomorphism.
  In particular, $\cM^*$ is connected and full.
\end{thm}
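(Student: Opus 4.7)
The plan is to follow the classical Douady--Hubbard argument for the connectedness of $\cM$, adapted to the fact that $\Phi^*$ is real-analytic rather than holomorphic in $c$. First, I would verify that $\Phi^*$ is well-defined and real-analytic on $\C\setminus\cM^*$: when $c\notin\cM^*$ the critical orbit escapes, and iterating the functional equation $\Boettcher_c(f_c(z))=\overline{\Boettcher_c(z)}^2$ backward from a neighborhood of infinity produces an unambiguous value for $\Boettcher_c(c)$. Explicitly,
\[
 \Phi^*(c)\;=\;\lim_{n\to\infty}\bigl(f_c^{2n}(c)\bigr)^{1/4^n},
\]
with the branch of the $4^n$-th root chosen asymptotic to $c$ at infinity; since $f_c^2$ is holomorphic in $z$ and $f_c^{2n}(c)$ is a polynomial in $c$ and $\bar c$, the right-hand side is real-analytic in $c$.

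Next I would establish properness. The series $\Boettcher_c(z)=z+\bar c/(2z)+O(|z|^{-2})$ yields $\Phi^*(c)=c+\bar c/(2c)+O(|c|^{-2})$, so $|\Phi^*(c)|\to\infty$ as $|c|\to\infty$. At the inner boundary, $|\Phi^*(c)|\to 1$ as $c\to\partial\cM^*$, since the escape rate $G(c):=\log|\Phi^*(c)|$ tends to zero there. Hence $\Phi^*\colon\C\setminus\cM^*\to\C\setminus\overline{\D}$ is a proper real-analytic map.

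The crux is to check that $\Phi^*$ is a local diffeomorphism, equivalently that the real Jacobian
\[
 J_{\Phi^*}\;=\;|\partial_c\Phi^*|^2-|\partial_{\bar c}\Phi^*|^2
\]
is everywhere positive. From the expansion above one reads off $J_{\Phi^*}(c)>0$ for $|c|$ large. To extend this globally I would differentiate the telescoping formula in $c$ and $\bar c$ and show that the Beltrami coefficient $\mu_{\Phi^*}=\partial_{\bar c}\Phi^*\big/\partial_c\Phi^*$ satisfies $|\mu_{\Phi^*}|<1$ throughout the escape locus. Once this is established, $\Phi^*$ is a proper orientation-preserving local diffeomorphism onto the connected surface $\C\setminus\overline{\D}$, hence a finite-sheeted covering; the asymptotic $\Phi^*(c)\sim c$ at infinity forces degree one, so $\Phi^*$ is a global real-analytic diffeomorphism. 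Extending by $\Phi^*(\infty)=\infty$ turns $\hat\C\setminus\cM^*$ into a topological disk, so $\cM^*$ is full, and together with compactness it is connected.

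The main obstacle is the global non-degeneracy of the Jacobian. Unlike the holomorphic Douady--Hubbard situation, where holomorphy plus properness immediately yield a conformal isomorphism, here the $\bar\partial_c$-contributions produced by the conjugation in $\bar z^2$ must be controlled explicitly. A careful accounting of these terms along the telescoping formula, using that the critical orbit never returns to the critical point $0$ on the escape locus, should deliver the bound $|\mu_{\Phi^*}|<1$ and finish the proof.
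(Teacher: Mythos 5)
The paper does not prove this theorem itself; it is quoted from Nakane's paper, so your proposal has to stand on its own. Your overall framework is the correct one and matches the structure of Nakane's argument: verify that $\Phi^*$ is real-analytic (via the telescoping product/limit coming from the functional equation), verify that it is proper (the asymptotics $\Phi^*(c)=c+\bar c/(2c)+O(|c|^{-2})$ at infinity and $|\Phi^*|\to 1$ at $\partial\cM^*$ are correct), show it is an orientation-preserving local diffeomorphism, and then invoke degree theory to upgrade to a global diffeomorphism. The surrounding steps are routine and fine.

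The genuine gap is exactly where you say "a careful accounting of these terms along the telescoping formula $\ldots$ should deliver the bound $|\mu_{\Phi^*}|<1$." That sentence is the entire theorem. Differentiating the recursion $P_{n+1}=(P_n^2+\bar c)^2+c$ with $P_n=f_c^{2n}(c)$ gives
\[
\partial_c P_{n+1}=4P_n(P_n^2+\bar c)\,\partial_c P_n+1,\qquad
\partial_{\bar c} P_{n+1}=4P_n(P_n^2+\bar c)\,\partial_{\bar c}P_n+2(P_n^2+\bar c),
\]
and both derivatives are driven by the same "multiplier" $4P_n(P_n^2+\bar c)$ but different forcing terms; it is not at all evident from this recursion that $|\partial_{\bar c}\Phi^*|<|\partial_c\Phi^*|$ pointwise on all of $\C\setminus\cM^*$, especially near $\partial\cM^*$ where the orbit is close to the Julia set and no expansion estimate is cheap. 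Your argument establishes $J_{\Phi^*}>0$ only for $|c|$ large, and a proper local diffeomorphism conclusion needs it everywhere; a single degenerate point would destroy the covering argument. Nakane's proof does not obtain this bound by brute-force estimation of the telescoping series, but rather by exploiting the fact that $f_c^2$ sits inside the \emph{holomorphic} two-parameter biquadratic family $(a,b)\mapsto (z^2+a)^2+b$, so that the relevant B\"ottcher data is holomorphic in $(a,b)$ and one restricts to the totally real slice $a=\bar b$; the non-degeneracy is then extracted from that holomorphic structure rather than from a direct Beltrami estimate. As written, your proposal assumes the hard part rather than proving it, so it is not a complete proof.

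Two smaller points worth tightening if you pursue this: (i) real-analyticity of the limit $\lim_n (f_c^{2n}(c))^{1/4^n}$ requires showing locally uniform convergence of the telescoping series together with its derivatives, not just pointwise convergence of real-analytic functions; (ii) before invoking the covering/degree argument you must rule out bounded components of $\C\setminus\cM^*$, which does follow from properness but should be said explicitly since connectedness of $\C\setminus\cM^*$ is part of what is being proved.
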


\begin{defn}
  A map $f_c$ is \emph{hyperbolic} if $c \not\in \cM^*$ or $f_c$ has an attracting cycle.

  A \emph{hyperbolic component} is a maximal connected open set of $c \in \C$ for which $f_c$ has an attracting cycle.
  The \emph{period} of a hyperbolic component is defined by that of the attracting cycle of $f_c$ for any (hence all) $c$ in the hyperbolic component.
\end{defn}

For the holomorphic quadratic family, a hyperbolic component is a component of the interior of $\cM$.
On the other hand, an interior component of $\cM^*$ can contain several hyperbolic components \cite{MR1020441}.

\subsection{External rays}

\begin{defn}[Dynamical rays and parameter rays]
  For $\theta \in \R/\Z$, let
  \begin{align*}
    R_c(\theta) & := \Boettcher_c^{-1}(\{r e^{2\pi i \theta}\mid r >1\}),
                &
    \cR(\theta) & := (\Phi^*)^{-1}(\{r e^{2\pi i \theta}\mid r >1\}).
  \end{align*}
  We call $R_c(\theta)$ the \emph{dynamical ray} of angle $\theta$ for $f_c$ and
  $\cR(\theta)$ the \emph{parameter ray} of angle $\theta$.
\end{defn}

Since $\Boettcher_c$ is the B\"ottcher coordinate for the holomorphic polynomial $f_c^2$, every dynamical ray of rational angle lands at a point unless it bifurcates \cite{MR762431}.
Furthermore, every parameter ray of rational angle for the Mandelbrot set lands at a point. On the contrary, there are many parameter rays of rational angles for the tricorn which do not land:
\begin{thm}[Inou-Mukherjee \cite{MR3502067}]
  \label{thm:non-landing parameter rays}
  Any parameter ray for $\cM^*$ accumulating to a hyperbolic component of odd period greater than one does not land at a point.
  The angle of every such a ray is rational.
\end{thm}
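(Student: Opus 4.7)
The plan is to combine the structural fact that the boundary of a hyperbolic component $H$ of odd period $k>1$ is a union of three cusps and three real-analytic \emph{parabolic arcs} with a parabolic Ecalle-cylinder analysis of how external rays behave across the bifurcation locus. I would organise the argument into three steps: first, localise the accumulation of the ray to a parabolic arc; second, show the angle must be rational; third, rule out landing at a single point.

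For the first step, let $\cR(\theta)$ be a parameter ray accumulating on $H$; its accumulation set is contained in $\partial H$. Near any cusp the multiplier of the $k$-periodic point leaves $\R_{\geq 0}$ with non-zero derivative, producing a definite sector of $\C\setminus\cM^*$ opening outward, and a single ray cannot fit inside such a sector while accumulating only at the cusp. Hence $\cR(\theta)$ must accumulate on a parabolic arc. For the second step (rationality), on a parabolic arc the parabolic $k$-periodic point is the landing point of finitely many dynamical rays whose angles are periodic under $\theta \mapsto -2\theta$ (the lift of doubling for $f_c^2$), hence rational. A wake-type holomorphic-motion argument across the arc identifies the angle of $\cR(\theta)$ with one of these co-landing angles, giving the second assertion.

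For the third step, suppose for contradiction that $\cR(\theta)$ lands at a single $c_0$ on a parabolic arc. Along the arc, the critical Ecalle height $h(c)$, defined by projecting the critical value of $f_c$ into the repelling Ecalle cylinder of the parabolic point, is real-analytic and non-constant — this non-constancy is morally the ``wiggly umbilical cord'' phenomenon already mentioned in the introduction. A parabolic-implosion analysis near $c_0$ relates the exit position of the critical orbit through the Ecalle cylinder — as $c$ travels along $\cR(\theta)$ toward $c_0$ — to $h(c_0)$, and in turn expresses $\theta$ as an explicit function of $h(c_0)$. Since $\theta$ is constant along the ray while $h(c_0)$ varies non-trivially as $c_0$ moves along the arc, no single $c_0$ can consistently play the role of the landing point, yielding a contradiction.

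The hard part is the parabolic-implosion computation that ties $\theta$ to $h(c_0)$. This requires transferring a parameter-plane datum (the external angle) into the dynamical plane across the parabolic bifurcation via a Lavaurs-type construction, and then comparing it with the Ecalle-cylinder position of the critical value. Steps one and two are comparatively soft — a local sector analysis near cusps, plus standard wake combinatorics — whereas the non-landing statement ultimately rests on this quantitative $0$-versus-$1$-dimensional mismatch between a rigid angle $\theta$ and the varying real-analytic function $h$ on the parabolic arc.
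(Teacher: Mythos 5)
This theorem is quoted from Inou--Mukherjee \cite{MR3502067}; the present paper does not reprove it, so there is no ``paper's own proof'' to compare against. Evaluating your proposal on its merits: the overall strategy (parabolic arcs, Ecalle cylinders, implosion near the arc) is in the spirit of \cite{MR3502067}, and the rationality step is plausible in outline, but two of your steps contain genuine errors.

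First, the cusp-exclusion mechanism is wrong. You argue that near a cusp ``the multiplier of the $k$-periodic point leaves $\R_{\geq 0}$ with non-zero derivative.'' For odd period $k$ the multiplier is $|\partial_{\bar z} f_c^k(z)|^2$, which is non-negative real for every $c$, and on all of $\partial\cH$ (cusps included) it equals $1$. So the multiplier never leaves $\R_{\ge 0}$ and there is no ``sector'' argument available. The correct and much simpler reason a parameter ray cannot accumulate only at a cusp is the fact, recalled in Section~\ref{sec:preliminaries}, that cusps lie in the \emph{interior} of $\cM^*$; a parameter ray lives in $\C\setminus\cM^*$ and cannot accumulate on an interior point.

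Second, and more seriously, the non-landing step is logically incoherent as written. You posit that the implosion analysis expresses $\theta$ ``as an explicit function of $h(c_0)$'' and then claim a contradiction because ``$h(c_0)$ varies non-trivially as $c_0$ moves along the arc'' while $\theta$ is constant. But under your own hypothesis the ray lands at a \emph{fixed} $c_0$, so $c_0$ does not move and no contradiction follows. The datum you are missing is the Ecalle \emph{phase}. The exit of the critical orbit through the repelling cylinder is a point of $\C/\Z$, with a real (height) coordinate converging to the critical Ecalle height of the nearby arc parameter and an angular (phase) coordinate governed by the lifted phase. Along $\cR(\theta)$, as the potential tends to $0$, the lifted phase is unbounded and its fractional part sweeps through all of $\R/\Z$ infinitely often; each phase class singles out a different parameter on the arc (the arc being parametrized by critical Ecalle height). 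It is this winding, not a purported functional dependence $\theta=F(h(c_0))$, that forces the accumulation set of the ray to be a non-degenerate sub-arc of the parabolic arc. Without bringing the phase into the argument there is no contradiction, and the step fails.
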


\subsection{Accessibility}

\begin{defn}
  A point $c_0 \in \partial \cM^*$ is \emph{accessible}
  if there exists a continuous arc $c=c(s): [0,\delta] \to \C$ for some $\delta>0$
  such that $c(0)=c_0$ and $c(s) \not \in \cM^*$ for $0<s\le 1$.
  We call such $c(s)$ a \emph{path to $c$}.
  A hyperbolic component is \emph{accessible} if it has an accessible boundary point.

  A point or a hyperbolic component is \emph{inaccessible} if it is not accessible.
\end{defn}

Every hyperbolic component $\cH$ of even period is accessible,
since there are external rays landing on the boundary of $\cH$
\cite[Lemma 7.3, Lemma 7.4]{Mukherjee:2014ab}.
Hence we need only consider hyperbolic components of odd period.

By Theorem~\ref{thm:non-landing parameter rays},
we cannot prove accessibility for a hyperbolic component of odd period greater than one by showing a parameter ray land at a boundary point.
However, it is not difficult to see that the hyperbolic components of period three is still accessible (see Figure~\ref{fig:accessible}):
\begin{thm}[Inou-Mukherjee \cite{MR3502067}]
  Every hyperbolic components of period one or three contains undecorated sub-arcs.
  In particular, it is accessible.
\end{thm}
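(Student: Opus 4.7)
The plan is to combine the standard description of $\partial \cH$ for a hyperbolic component $\cH$ of odd period as a union of three cusps and three parabolic arcs with the sufficient condition for accessibility (Theorem~\ref{thm:accessibility}), which rephrases accessibility as a concrete geometric statement about the projection of the basin of infinity into the repelling Ecalle cylinder of the parabolic cycle.

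First I would recall that on each parabolic arc of $\partial \cH$, the decorations --- the satellite baby tricorn- and Mandelbrot-like sets together with the accumulation sets of the non-landing parameter rays supplied by Theorem~\ref{thm:non-landing parameter rays} --- attach only at a discrete set of satellite bifurcation points. Consequently each parabolic arc decomposes into open sub-arcs on which no decoration is attached, giving the candidate \emph{undecorated sub-arcs}. Fix any $c_0$ in such a sub-arc, construct repelling Fatou coordinates on the repelling petal of the parabolic cycle of $f_{c_0}$, and let $U$ be the image of the basin of infinity in the repelling Ecalle cylinder $\C/\Z$. By Theorem~\ref{thm:accessibility}, it suffices to exhibit a round horizontal circle contained in $U$.

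For period $1$, the filled Julia set $K_{c_0}$ is small, the basin of infinity dominates $\C$, and $U$ has very large modulus and fills almost the entire cylinder, so a horizontal circle is found directly by a modulus or area estimate. For period $3$, the modulus of $U$ is smaller, but on an undecorated sub-arc the absence of satellite attachment at $c_0$ means that the basin of infinity, viewed in the repelling petal of $f_{c_0}$, has tongues that wrap completely around the Ecalle cylinder rather than being pinched off by an accumulated satellite piece; hence $U$ contains a horizontal circle. Applying Theorem~\ref{thm:accessibility} to this circle produces a continuous path $c(s)$ in $\C \setminus \cM^*$ landing at $c_0$, proving accessibility of $\cH$. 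The main obstacle is the horizontal-circle verification at period $3$, which rests on turning ``no decoration is attached at $c_0$'' into the geometric wrap-around statement above; this is cleanest along the real slice, where the bifurcation combinatorics of the airplane component and its two rotates are fully understood.
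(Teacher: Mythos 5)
This statement is cited by the paper from Inou--Mukherjee \cite{MR3502067} without proof, so there is no ``paper's own proof'' to compare against; I can only assess whether your proposal would constitute a valid argument, and it would not, for two reasons.

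First, your opening step is false as a general statement. You assert that on each parabolic arc the decorations ``attach only at a discrete set of satellite bifurcation points,'' and from this you conclude that undecorated sub-arcs exist. But the paper itself (citing \cite{hubbard_multicorns_2014}) tells you that $\partial\cM^*$ contains real-analytic arcs of positive length consisting of inaccessible points, lying on the boundaries of odd-period hyperbolic components: the accumulation sets of non-landing parameter rays are not discrete. For large odd periods there is no reason to believe undecorated sub-arcs exist at all, and the whole content of the Inou--Mukherjee theorem is that they do exist for periods $1$ and $3$. You cannot take their existence as a preliminary observation.

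Second, and more fundamentally, your period-$3$ argument is circular. Theorem~\ref{thm:accessibility} proves: \emph{if} the projection of the basin of infinity to the repelling Ecalle cylinder contains a horizontal band through the critical Ecalle height, \emph{then} a neighborhood of $c_0$ meets $\cM^*$ only in $\overline{\cH}$ (i.e.\ $c_0$ sits on an undecorated sub-arc). You instead start from ``$c_0$ lies on an undecorated sub-arc'' and use this to conclude that the basin-of-infinity tongues ``wrap completely around'' the cylinder --- i.e.\ you invoke the conclusion to derive the hypothesis. A correct proof must verify the Ecalle-cylinder hypothesis of Theorem~\ref{thm:accessibility} directly from the dynamics: for period $1$ by a modulus/area estimate as you suggest (this part is plausible), and for period $3$ by an explicit analysis of the parabolic maps on the co-root arcs of the airplane component and its rotates (the paper indicates this is done for a parameter with critical Ecalle height $0$), and only then conclude that an undecorated sub-arc exists. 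As written, your proposal would not survive scrutiny at period $3$.
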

We say an arc $\gamma \subset \partial \mathcal{H} \cap \partial \cM^*$ is \emph{undecorated} if there exists a neighborhood $\mathcal{U}$ of $\gamma$ such that
$\mathcal{U} \cap \partial \cM^* = \gamma$.
In other words,
\[
  \mathcal{U} \cap \cM^* \subset \overline{\mathcal{H}},
\]
hence no other parts (``decorations'') of $\cM^*$ than $\overline{\mathcal{H}}$ intersect $\mathcal{U}$. Undecorated arcs are sometimes called ``\emph{open beaches}'' (see also Theorem~\ref{thm:accessibility} below).

\subsection{Parabolic arcs}

Here we recall some basic facts on hyperbolic components of odd period.
For further detail, see \cite{MR2020986} and \cite{Mukherjee:2014ab}.

The \emph{multiplicity} of a periodic point $z$ for $f_c$ is, by definition, that of $z$ for $f_c^2$. More precisely, let $p$ be the period of $z$ for $f_c$. Then the multiplier $\lambda$ is defined by
\[
  \lambda =
  \begin{cases}
  (f_c^p)'(z) & \text{for $p$ even,} \\
  (f_c^{2p})'(z) & \text{for $p$ odd.}
  \end{cases}
\]
A simple but important fact is that when $p$ is odd, $\lambda=|\frac{\partial}{\partial\bar{z}}(f_c^p)(z)|^2$ is a non-negative real number.

Let $\cH$ be a hyperbolic component of odd period $p\ge 3$.
Then by the above fact, for any $c \in  \partial \cH$,
there exists a periodic point $z$ such that $(f_c^{2p})'(z) = 1$.
Moreover, one can prove $z$ has the exact period $p$.

We say a parameter $c \in \partial \cH$
is a \emph{cusp} if $z$ is a double parabolic point,
i.e., $z$ has two attracting petals.
Otherwise, $z$ has only one attracting petal and we say $c$ is \emph{non-cusp}.
The boundary $\partial \cH$ consists of three cusps and three arcs connecting them.
These three arcs are called \emph{parabolic arcs}.
There exists a unique parabolic arc $\gamma \subset \partial \cH$ such that
the parabolic periodic points disconnect the filled Julia set $K_c$ for $c \in \gamma$.
We call $\gamma$ the \emph{root} arc; the others are called the \emph{co-root} arcs.
Roughly speaking, the root arc is the arc which the ``umbilical cord'' accumulates (see Figure~\ref{fig:umbilical cord}).
\begin{figure}
  \includegraphics[width=10cm]{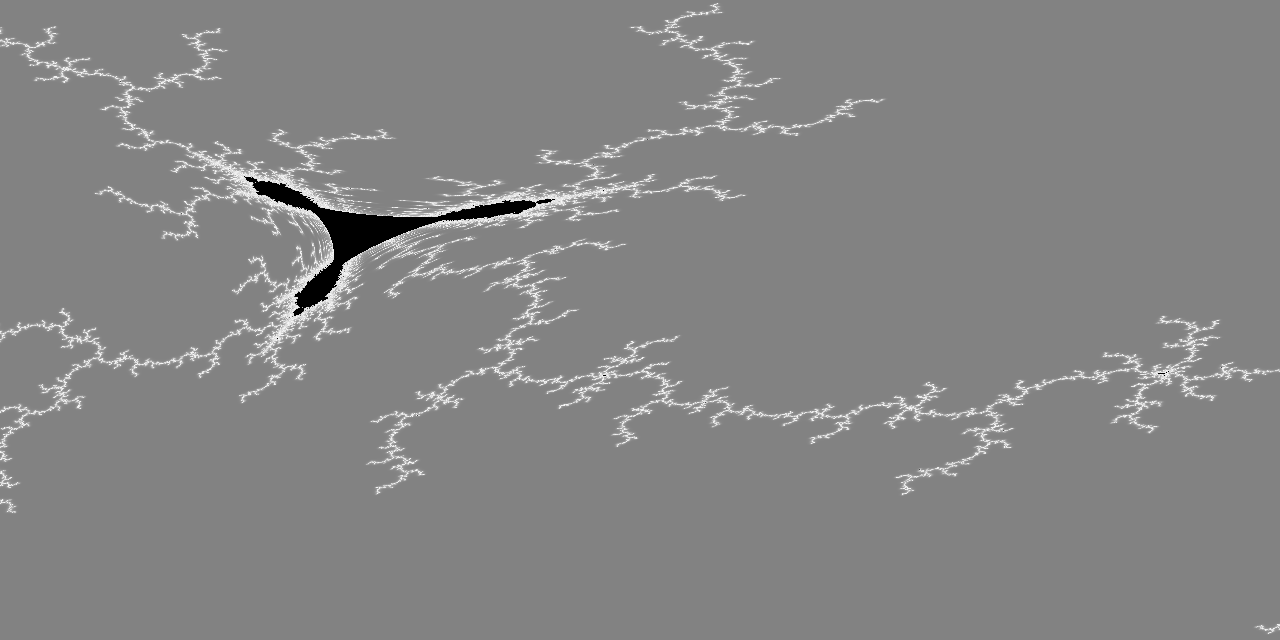}

  \caption{A hyperbolic component of odd period and its umbilical cord.}
  \label{fig:umbilical cord}
\end{figure}
Cusps are contained in the interior of the tricorn,
hence $\cH$ can be accessible only at points in (the middle of) the parabolic arcs.

We are mainly interested in accessibility on co-root arcs,
since there are less decorations that accumulate to them
(precisely speaking, there is no umbilical cord).

\subsection{Fatou coordinates}

Recall that all parameters on the boundary of a given hyperbolic component of odd period is parabolic.
Hence it is important to study parabolic bifurcation of anti-holomorphic dynamics
to discuss the accessibility of a given hyperbolic component of odd period.

Here we briefly recall the notion and basic properties of Fatou coordinates for anti-holomorphic parabolic maps and their perturbations.
Fatou coordinates for an anti-holomorphic map are ones for the holomorphic map $f^2$,
which is studied by Shishikura \cite{MR1765097}.
Holomorphic Fatou coordinates are unique up to (complex) translation, but
we further require Fatou coordinates to respect the anti-holomorphic dynamics, hence they are determined only up to \emph{real} translation.
For more details see \cite{hubbard_multicorns_2014} (see also \cite{MR3502067}).

Let $\gamma \subset \partial \cH$ be a parabolic arc of a hyperbolic component $\cH$ of odd period $p \ge 3$.
For $c \in \gamma$, let $x = x_c$ be the parabolic periodic point for $f_c$ whose immediate basin contains the critical value $c$.
Let $\Fatou_{c,\attr}$ and $\Fatou_{c,\rep}$ be attracting and repelling Fatou coordinates;
i.e., univalent maps defined on domains whose boundaries contain $x_c$
satisfying the ``anti-holomorphic Abel equation''
\begin{equation}
  \label{eqn:Abel}
  \Fatou_{c,*}(f_c(z)) = \overline{\Fatou_{c,*}(z)}+\frac{1}{2}
  \quad (*=\attr,\rep)
\end{equation}
where both sides are defined,
and the range of $\Fatou_{c,\attr}$ (resp.\ $\Fatou_{c,\rep}$)
contains a right (resp.\ left) half plane \cite[Lemma~2.3]{hubbard_multicorns_2014}.
Let us denote by $V_{c,*}$ the domain of definition of $\Fatou_{c,*}$.

Fatou coordinates are unique up to \emph{real} translation.
Hence it follows that the imaginary part $E_{c,*}(z) = \im \Fatou_{c,*}(z)$
is well-defined.
We call $E_{c,*}(z)$ the \emph{Ecalle height} of a point $z$
in the domain of $\Fatou_{c,*}$.

We can extend $\Fatou_{c,\attr}$ holomorphically to the basin of attraction of $x_c$, and $\FatouInv_c := \Fatou_{c,\rep}^{-1}$ can also be extended holomorphically to the whole complex plane.
In particular, the \emph{critical Ecalle height} $E_{c,\attr}(c)$ is well-defined.

The following simple fact is used later:
\begin{lem}
  \label{lem:c=1/4}
  For $c=\frac{1}{4}$, the external ray $R_{1/4}(0)$ is equal to
  $(\frac{1}{2},+\infty)$, and
  \[
    \FatouInv_{1/4}(R_{1/4}(0)) \subset \R.
  \]
\end{lem}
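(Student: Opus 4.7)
The plan hinges on the observation that $f_{1/4}$ commutes with complex conjugation. A direct check gives $f_{1/4}(\bar z) = z^2 + 1/4 = \overline{f_{1/4}(z)}$, so $K_{1/4}$ is symmetric about $\R$, and the normalization $\Boettcher_c(z)/z \to 1$ forces $\Boettcher_{1/4}(\bar z) = \overline{\Boettcher_{1/4}(z)}$. The same symmetry also pins down the repelling Fatou coordinate (which is unique up to real translation): one may arrange $\Fatou_{1/4,\rep}$ to be real on a real arc of the repelling petal attached to the parabolic fixed point $x_{1/4} = 1/2$.

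For the ray identification, on $\R$ the map $f_{1/4}$ coincides with the real parabolic polynomial $g(x) = x^2 + 1/4$, whose only fixed point is $1/2$; one checks that $K_{1/4}\cap\R = [-1/2,1/2]$, so $(1/2,+\infty)$ lies in the basin of infinity of $f_{1/4}$. By the conjugation symmetry, $\Boettcher_{1/4}$ maps $(1/2,+\infty)$ into $\R\setminus\overline{\D}$; the normalization at infinity together with connectedness places the image in $(1,+\infty)$, and monotonicity of the restriction identifies it with all of $(1,+\infty)$. This gives $R_{1/4}(0) = (1/2,+\infty)$.

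For the inclusion $\FatouInv_{1/4}(R_{1/4}(0))\subset\R$, I would rewrite the anti-holomorphic Abel equation~\eqref{eqn:Abel} in terms of $\FatouInv_{1/4} = \Fatou_{1/4,\rep}^{-1}$ as
\[
 \FatouInv_{1/4}(u) \;=\; f_{1/4}\bigl(\FatouInv_{1/4}(\bar u - \tfrac{1}{2})\bigr),
\]
which for real $u$ reduces to $\FatouInv_{1/4}(u) = f_{1/4}\bigl(\FatouInv_{1/4}(u - \tfrac{1}{2})\bigr)$. By the normalization of the first paragraph, $\FatouInv_{1/4}$ is real on some left half-line $(-\infty, -M]$ (the image of a real arc in the repelling petal under the local inverse of $\Fatou_{1/4,\rep}$). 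Since $f_{1/4}(\R)\subset\R$, an induction on the number of half-steps needed to translate a real $u$ into $(-\infty,-M]$ shows $\FatouInv_{1/4}(\R)\subset\R$. Combined with the ray identification this yields the lemma. I do not foresee a serious obstacle: the whole argument is driven by the real symmetry of $f_{1/4}$ together with uniqueness of the B\"ottcher and Fatou normalizations.
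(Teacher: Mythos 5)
Your proof is correct, and since the paper states Lemma~\ref{lem:c=1/4} as a ``simple fact'' without proof, there is no paper argument to compare against. The route you take — the real symmetry $f_{1/4}(\bar z)=\overline{f_{1/4}(z)}$ forcing reality of both the B\"ottcher coordinate and the repelling Fatou coordinate, elementary real dynamics to get $K_{1/4}\cap\R=[-1/2,1/2]$, and the Abel equation to propagate reality of $\FatouInv_{1/4}$ along $\R$ — is the natural one. Two small remarks. First, your ``induction on half-steps'' can be bypassed: once $\FatouInv_{1/4}$, which is entire, is known to be real on a segment of $\R$ (the image of the real repelling petal arc), Schwarz reflection gives $\overline{\FatouInv_{1/4}(\bar w)}\equiv\FatouInv_{1/4}(w)$ on all of $\C$, so $\FatouInv_{1/4}(\R)\subset\R$ immediately; also note the symmetry of $\Fatou_{1/4,\rep}$ is automatic rather than a choice, since the formula you invoke forces the ambiguous real translation constant to vanish. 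Second, it is worth knowing that what the paper actually invokes in Section~\ref{sec:proof} is the stronger identity $\FatouInv_{1/4}(\R)=R_{1/4}(0)$; your argument already delivers this with one more sentence, since $\FatouInv_{1/4}$ maps a far-left real ray into $(1/2,1/2+\varepsilon)$, and the Abel equation together with $f_{1/4}((1/2,\infty))=(1/2,\infty)$ then gives $\FatouInv_{1/4}(\R)\subset(1/2,\infty)$, with equality by the intermediate value theorem. Finally, the step where you claim the image of $(1/2,\infty)$ under $\Boettcher_{1/4}$ is all of $(1,\infty)$ tacitly uses that $K_{1/4}$ is connected (so $|\Boettcher_{1/4}|\to1$ at $\partial K_{1/4}$); this holds because $1/4\in[-2,1/4]\subset\cM^*$, and you should say so.
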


Let $c_0 \in \gamma$ and consider a perturbation $c$ of $c_0$ with $c \not \in \overline{\cH}$.
There still exist attracting and repelling Fatou coordinates $\Fatou_{c,*}:V_{c,*} \to \C$.
They are again unique up to real translation;
so we normalize them so that $\Fatou_{c,*} \to \Fatou_{c_0,*}$ as $c \to c_0$.
Moreover, the forward orbit of any point $z \in \Fatou_{c,\attr}$ eventually escapes from $V_{c,\attr}$ and enters in $V_{c,\rep}$.
This induces an isomorphism between the \emph{Ecalle cylinders}, i.e., an isomorphism from $V_{c,\attr}/f_c$ to $V_{c,\rep}/f_c$. Since Ecalle cylinders are isomorphic to $\C/\Z$ by $\Fatou_{c,*}$, such an isomorphism is in fact a translation;
in other words, we have
\[
  \Fatou_{c,\rep}(f_c^{2n}(z))-\Fatou_{c,\attr}(z) = n + C
  \quad \text{if }z \in V_{c,\attr} \text{ and } f_c^{2n}(z) \in V_{c,\rep}.
\]
for some constant $C$.
The ``anti-holomorphic Abel equation'' \eqref{eqn:Abel} implies
$C \in \R$.
This constant is called the \emph{lifted phase}.

\subsection{Accessible parameters and Fatou coordinates}

Now we relate the parabolic bifurcation theory to the accessibility of hyperbolic components of odd period.

To prove the main theorem,
we use a sufficient condition for a parameter $c_0$ in a parabolic arc
to lie in an undecorated arc, which is the following:
\begin{thm}[A sufficient condition for accessibility]
  \label{thm:accessibility}
  Let $\cH$ be a hyperbolic component of odd period
  and $c_0 \in \partial \cH$ be a non-cusp point.
  If there exists $\varepsilon>0$ such that
  \[
    \{z \in V_{c_0,\rep}|\ |E_{c_0,\rep}(z) - E_{c_0,\attr}(c_0)| < \varepsilon\}
  \]
  is contained in the basin of infinity,
  then there exists a neighborhood $\cU$ of $c_0$ such that
  $\cU \cap \cM^* = \cU \cap \overline{\cH}$.

  In particular, $c_0$ is accessible.
\end{thm}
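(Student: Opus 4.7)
The plan is to show that every parameter $c$ in a small neighborhood $\cU$ of $c_0$ with $c \notin \overline{\cH}$ in fact lies in $\C \setminus \cM^*$; accessibility of $c_0$ will then follow from the local geometry of a non-cusp parabolic arc. The core tool is the standard parabolic perturbation theory: for $c$ near $c_0$ (suitably normalized), the attracting and repelling Fatou coordinates $\Fatou_{c,\attr}$, $\Fatou_{c,\rep}$ are defined and the extended inverse $\FatouInv_c\colon \C \to \C$ converges uniformly on compact sets to $\FatouInv_{c_0}$; likewise the critical Ecalle height $E_{c,\attr}(c)$ varies continuously in $c$ and tends to $E_{c_0,\attr}(c_0)$.

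For $c \in \cU \setminus \overline{\cH}$, the parabolic point is perturbed apart so that the critical orbit eventually leaves $V_{c,\attr}$ through the ``gate'' and re-enters $V_{c,\rep}$ at some first return iterate $z_c = f_c^{2n(c)}(c)$. The anti-holomorphic Abel equation \eqref{eqn:Abel} guarantees that the lifted phase is \emph{real}, so Ecalle heights are preserved by the transition:
\[
 E_{c,\rep}(z_c)\ =\ E_{c,\attr}(c)\ \longrightarrow\ E_{c_0,\attr}(c_0) \qquad (c \to c_0).
\]
Choosing a bounded fundamental domain for the action of $f_c$ on $V_{c,\rep}$ and using the convergence $\FatouInv_c \to \FatouInv_{c_0}$, we can assume $z_c$ lies arbitrarily close to a point $z_0 \in V_{c_0,\rep}$ with $|E_{c_0,\rep}(z_0)-E_{c_0,\attr}(c_0)|<\varepsilon/2$. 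By hypothesis $z_0$ is in the basin of infinity of $f_{c_0}$, so some iterate $f_{c_0}^N(z_0)$ lies past the escape radius; by continuity of iterates in $(c,z)$, the same iterate $f_c^N(z_c)$ also escapes for $c$ close enough to $c_0$. Since $z_c$ is on the forward orbit of the critical value, this forces $c \notin \cM^*$, proving $\cU \cap \cM^* = \cU \cap \overline{\cH}$.

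Finally, a non-cusp $c_0$ lies in the relative interior of a parabolic arc $\gamma \subset \partial \cH$, which is a real-analytic arc bounding $\cH$ on one side; any transverse path leaving $\overline{\cH}$ at $c_0$ stays in $\cU \setminus \overline{\cH}$ and therefore in $\C \setminus \cM^*$, yielding accessibility. The main technical difficulty is Step 2: the escape time $n(c)$ diverges as $c \to c_0$, so one needs uniform control (up to a bounded fundamental domain) of the entry point into the repelling cylinder in the parabolic implosion limit. This is the anti-holomorphic analogue of the Douady--Lavaurs perturbation picture, where the reality of the lifted phase is the indispensable ingredient that prevents the transferred Ecalle height from drifting away from the prescribed target strip.
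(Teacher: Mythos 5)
Your proposal is correct and follows essentially the same route as the paper's proof: shrink $\cU$ so that the Ecalle-height strip remains in the basin of infinity and the critical Ecalle height stays within $\varepsilon/2$ of $E_{c_0,\attr}(c_0)$, then for $c\in\cU\setminus\overline{\cH}$ pass the critical orbit through the gate into $V_{c,\rep}$, use the reality of the lifted phase to conclude the Ecalle height is preserved, and deduce escape. Your version spells out the compactness/continuity argument (bounded fundamental domain, uniform escape time) that the paper leaves implicit in its choice of $\cU$, but the substance is identical.
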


\begin{proof}
  Take a small neighborhood $\cU$ such that for any $c \in \cU$,
  \begin{enumerate}
    \item $\{|E_{c,\rep}(z) - E_{c_0,\attr}(c_0)| < \varepsilon/2\}$ is contained in the basin of infinity for $f_c$, and
    \item $|E_{c,\attr}(c)-E_{c_0,\attr(c_0)}| < \varepsilon/2$.
  \end{enumerate}
  If further $c \not \in \overline{\cH}$,
  there exists some $n>0$ such that $f_c^n(c) \in V_{c,\rep}$.
  Since the lifted phase is real,
  $E_{c,\rep}(f_c^n(c)) = E_{c,\attr}(c)$,
  hence $c$ is in the basin of infinity and $c \not \in \cM^*$.
\end{proof}

\section{Scaling limit}
\label{sec:scaling limit}

To find parameters satisfying the assumption of Theorem~\ref{thm:accessibility},
we first specify a sequence of hyperbolic components and discuss the limiting behavior nearby both in the phase space and the parameter space.

In this section, we generalize a traditional result
on convergence of some scaling limits to the original quadratic family
(see \cite{MR816367}, \cite{MR810493} and \cite{MR1765082}).
For simplicity, we consider the following specific situation:

Let $n$ be an integer and set $N=2n+3$.
Define a sequence of parameters $\{c_n\}$ as follows:
\begin{equation}
  \label{eqn:c_n}
  c_n = \min \{c \in \R \mid Q_c^N(0)=0\}.
\end{equation}
Namely, $c_n$ is the unique real parameter such that
\begin{equation}
  \label{eqn:critical orbit}
  c_n=Q_{c_n}(0) < 0=Q_{c_n}^N(0) < Q_{c_n}^{N-1}(0) < \dots
  < Q_{c_n}^2(0).
\end{equation}
Note that \eqref{eqn:c_n} and \eqref{eqn:critical orbit} also hold for $f_c$
instead of $Q_c$ since everything is real (Figure \ref{fig:c_n}).

\begin{figure}[htbp]
  \begin{center}
    \includegraphics[width=.9\textwidth]{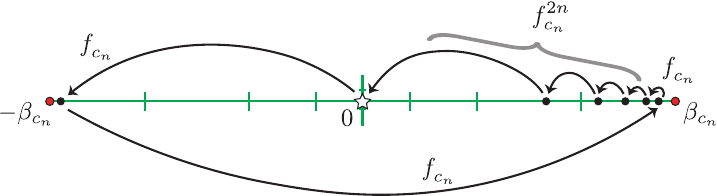}
  \end{center}
  \caption{The critical orbit of $f_{c_n}$ (or $Q_{c_n})$}
  \label{fig:c_n}
\end{figure}

In the following, we consider the case $n$ large and $c \in \C$ is close to $c_n$. In particular, $c$ is close to $\hat{c} := \lim c_n = -2$.
Let $\beta_c$ be the $\beta$-fixed point for $f_c$, i.e., the landing point of $R_c(0)$ and let $\lambda_c = (f_c^2)'(\beta_c)$ be its multiplier.
Let $u=\Koenigs_c(z)$ be a linearizing coordinate (also known as a Koenigs coordinate \cite{MR1508749}) at $\beta_c$ for $f_c$, i.e., a holomorphic map defined near $\beta_c$ such that
\begin{align}
  \label{eqn:linearizing}
  \Koenigs_c(f_c^2(w)) & = \lambda_c \Koenigs_c(w), &
  \Koenigs_c(\beta_c)  & = 0.
\end{align}
We may assume $\Koenigs_c(0)$ is defined;
we normalize $\Koenigs_c$ so that $\Koenigs_c(0)=1$, hence
\[
  u = \Koenigs_c(z) = 1+a_c z + O(z^2) \quad \text{near }z=0
\]
with $a_c \ne 0$.
The Poincar\'e map%
\footnote{The inverse of the Koenigs function was considered by Poincar\'e \cite{JMPA_1890_4_6__313_0}, as a function $\varphi(z)$ such that $\varphi(mz)$ is a rational function of $\varphi(z)$. See also \cite{lattes_sur_1918}, \cite{MR1501186} and \cite{MR2348953}. }
$\KoenigsInv_c := \Koenigs_c^{-1}$ satisfies
\[
  z = \KoenigsInv_c(u) = a_c^{-1}(u-1) + O((u-1)^2)
  \quad \text{near }u=1.
\]

Now we consider the first return map $f_c^N$ near $0$.
We factor $f_c^N = (f_c^2)^n \circ f_c^3$; that is, first $f_c^3$ anti-holomorphically maps a small neighborhood of $0$ into a neighborhood of $\beta_c$, which is easy to estimate because the number of iteration is small,
and then $(f_c^2)^N$ holomorphically maps it to a neighborhood of $0$,
which can be estimated in terms of the linearizing coordinate.

Let us denote
\begin{equation}
  \label{eqn:A_i}
  \begin{aligned}
    \Koenigs_c \circ f_c^3(z) & = \sum_{i=0}^\infty A_i(c) \bar{z}^i                     \\
                              & = A_0(c) +A_2(\hat{c}) \bar{z}^2 (1+O(|z|+|c-\hat{c}|)).
  \end{aligned}
\end{equation}
Then each $A_i(c)$ is a real-analytic function on $c$ ($A_i(c)$ contains terms in both $c$ and $\bar{c}$) defined near $\hat{c}$ and $A_2(\hat{c}) \ne 0$ (indeed, we have $A_2(\hat{c})=\frac{64}{\pi^2}>0$.)
\begin{lem}
  \label{lem:B and B*}
  There exist real-analytic functions $B(c)$ and $B^*(c)$ defined near $\hat{c}$
  such that
  \[
    A_0(c) = (c-\hat{c})B(c) + \overline{(c-\hat{c})}B^*(c).
  \]
  Moreover,
  \begin{align*}
    b_0 & := B(\hat{c}) =  \frac{2^7 \cdot 7}{15\pi^2}, &
    b_0^* := B^*(\hat{c}) = -\frac{2^8}{15\pi^2}.
  \end{align*}
  In particular, $|b_0| \ne |b_0^*|$.
\end{lem}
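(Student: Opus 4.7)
The plan is to verify $A_0(\hat c)=0$, derive the stated decomposition by real-analytic splitting, and then compute the first-order Wirtinger partials $b_0=\partial_c A_0(\hat c)$ and $b_0^*=\partial_{\bar c} A_0(\hat c)$ explicitly. At $\hat c=-2$ the critical orbit is $0\mapsto -2\mapsto 2\mapsto 2$, so $f_{\hat c}^3(0)=2=\beta_{\hat c}$ and hence $A_0(\hat c)=\Koenigs_{\hat c}(\beta_{\hat c})=0$. Since both $\Koenigs_c(z)$ and $f_c^3(0)$ are real-analytic in $(c,\bar c)$, so is $A_0$, and vanishing at $\hat c$ immediately yields the splitting $A_0(c)=(c-\hat c)B(c)+\overline{(c-\hat c)}B^*(c)$ with real-analytic $B,B^*$ (collect the convergent double power series around $\hat c$ by powers of $c-\hat c$); then $b_0=B(\hat c)$ and $b_0^*=B^*(\hat c)$.

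The computation of $b_0,b_0^*$ is streamlined by the identity $\Koenigs_c(\beta_c)\equiv 0$. Expanding $\Koenigs_c$ around $\beta_c$ and using $f_{\hat c}^3(0)=\beta_{\hat c}$, the first-order parts of $A_0$ at $\hat c$ collapse to
\[
\partial_c A_0(\hat c)=\Koenigs_{\hat c}'(\beta_{\hat c})\cdot \partial_c\bigl(f_c^3(0)-\beta_c\bigr)\big|_{\hat c},
\]
and likewise for $\partial_{\bar c}$, so I only need three numbers. From $f_c^3(0)=(c^2+\bar c)^2+c$ the partials at $\hat c$ are $-15$ and $4$; implicit differentiation of the fixed-point equation $\bar\beta_c^{\,2}+c=\beta_c$ together with its conjugate yields $\partial_c\beta_c=-1/(4|\beta_c|^2-1)$ and $\partial_{\bar c}\beta_c=2\bar\beta_c/(1-4|\beta_c|^2)$, equal to $-1/15$ and $-4/15$ at $\hat c$ (using $|\beta_{\hat c}|^2=4$).

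For $\Koenigs_{\hat c}'(\beta_{\hat c})$ I use Chebyshev explicitness: the Poincar\'e function of $Q_{-2}$ at $2$ is $2\cosh(\sqrt u)$, so the linearizer normalized by $\widetilde{\Koenigs}_{\hat c}'(\beta_{\hat c})=1$ is $\widetilde{\Koenigs}_{\hat c}(z)=(\operatorname{arcosh}(z/2))^2$, whence $\widetilde{\Koenigs}_{\hat c}(0)=-\pi^2/4$; the paper's normalization $\Koenigs_{\hat c}(0)=1$ then forces $\Koenigs_{\hat c}=-(4/\pi^2)\widetilde{\Koenigs}_{\hat c}$, hence $\Koenigs_{\hat c}'(\beta_{\hat c})=-4/\pi^2$. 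Assembling, $\partial_c(f_c^3(0)-\beta_c)|_{\hat c}=-224/15$ gives $b_0=(-4/\pi^2)(-224/15)=2^7\cdot 7/(15\pi^2)$, and $\partial_{\bar c}(f_c^3(0)-\beta_c)|_{\hat c}=64/15$ gives $b_0^*=-2^8/(15\pi^2)$; the ratio $|b_0|/|b_0^*|=7/2$ is not $1$. The main delicate point is the Chebyshev computation of $\widetilde{\Koenigs}_{\hat c}(0)$: one might naively expect it to vanish in view of $f_{\hat c}^2(0)=\beta_{\hat c}$, but the analytic continuation of the linearizer through the critical point $z=0$ produces the nonzero value $-\pi^2/4$, in agreement with the functional equation only because the globally extended $\widetilde{\Koenigs}_{\hat c}$ is multi-valued and the branch at $\beta_{\hat c}$ reached by continuation through $0\to -2\to 2$ differs from the trivial one.
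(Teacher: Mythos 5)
Your proof is correct and follows essentially the same route as the paper's: linearize $A_0(c)=\Koenigs_c(f_c^3(0))$ at $\beta_c$, compute the Wirtinger derivatives of $f_c^3(0)-\beta_c$ at $\hat{c}$ (getting $-224/15$ and $64/15$), and extract $\Koenigs_{\hat{c}}'(\beta_{\hat{c}})=-4/\pi^2$ from the Chebyshev formula for the Poincar\'e function $\KoenigsInv_{\hat{c}}(w)=2\cos\bigl(\tfrac{\pi}{2}\sqrt{w}\bigr)$. Your closing remark, that the branch of the globally continued linearizer at $\beta_{\hat{c}}$ reached through the critical point differs from the branch vanishing there, is a genuine subtlety that the paper leaves implicit in its normalization $\Koenigs_c(0)=1$.
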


\begin{proof}
  Let $v_c := f_c^3(0)$ be such that
  $A_0(c) = \Koenigs_c \circ f_c^3(0) = \Koenigs_c(v_c)$.
  Note that $v_c$ is real analytic with $v_c \to v_{\hat{c}} = \beta_{\hat{c}}=2$ as $c \to \hat{c}=-2$.
  Since $\Koenigs_c(\beta_c)=0$ and $\Koenigs_c'(\beta_c) \ne 0$ for any $c$ near $\hat{c}$, we have
  \[
    \Koenigs_c(z) = \Koenigs_c'(\beta_c)(z-\beta_c) + O((z-\beta_c)^2)
  \]
  near $\beta_c$ and thus
  \[
    A_0(c) = \Koenigs_c(v_c) = \Koenigs_c'(\beta_c)(v_c - \beta_c)
    + O((v_c - \beta_c)^2).
  \]
  Since $v_c=(c^2+\bar{c})^2+c$, $\beta_c = f_c(\beta_c) = \overline{\beta_c}^2+c$ and $\beta_c = f_c^2(\beta_c)=(\beta_c^2+\bar{c})^2+c$,
  we have
  \begin{align*}
    \frac{\partial v_c}{\partial c}                                     & = 4c(c^2+\bar{c})+1, &
    \frac{\partial v_c}{\partial \bar{c}}                               & = 2(c^2+\bar{c}),      \\
    \frac{\partial \beta_c}{\partial c}                                 & =
    4\beta_c\frac{\partial \beta_c}{\partial c} \overline{\beta_c} + 1, &
    \frac{\partial \beta_c}{\partial \bar{c}}                           & =
    2(2\beta_c\frac{\partial \beta_c}{\partial \bar{c}}+1) \overline{\beta_c},
  \end{align*}
  Thus
  \begin{align*}
    \frac{\partial \beta_c}{\partial c}       & =
    -\frac{1}{4|\beta_c|^2-1},                &
    \frac{\partial \beta_c}{\partial \bar{c}} & =
    -\frac{2\overline{\beta_c}}{4|\beta_c|^2-1},
  \end{align*}
  and
  \begin{align*}
    \left. \frac{\partial}{\partial c}(v_c - \beta_c)\right|_{c=\hat{c}}
     & = -15 + \frac{1}{15} = -\frac{224}{15}, \\
    \left. \frac{\partial}{\partial \bar{c}}(v_c - \beta_c)\right|_{c=\hat{c}}
     & = 4 + \frac{4}{15} = \frac{64}{15}.
  \end{align*}
  Hence we have
  \[
    v_c - \beta_c = \frac{32}{15} (-7(c-\hat{c}) + 2\overline{(c-\hat{c})}) + O((c-\hat{c})^2).
  \]
  Since $\Koenigs_c'(\beta_c)$ is a real-analytic function of the form
  $\Koenigs_c'(\beta_c) = \Koenigs_{\hat{c}}'(\beta_c) + O(|c-\hat{c})$ near $c=\hat{c}$, we conclude that the functions $B(c)$ and $B^*(c)$ in the statement exist.

  Moreover, since $f_{\hat{c}}^2(z) = Q_{-2}^2(z)$ is the Chebyshev map of degree 4, we have
  \begin{equation}
    \label{eqn:Poincare for Chebyshev}
    \KoenigsInv_{\hat{c}}(w) = 2\cos\left(\frac{\pi}{2}\sqrt{w}\right).
  \end{equation}
  Hence
  \begin{align*}
    A_0(c) & = \Koenigs_{\hat{c}}'(\beta_{\hat{c}})(v_c-\beta_c)
    + O((v_c-\beta_c)^2)+O(c-\hat{c})(v_c-\beta_c)                            \\
           & = \frac{32}{15\KoenigsInv_{\hat{c}}'(0)}
    (-7(c-\hat{c}) + 2\overline{(c-\hat{c})})
    + O((c-\hat{c})^2)                                                        \\
           & = -\frac{128}{15\pi^2} (-7(c-\hat{c}) + 2\overline{(c-\hat{c})})
    + O((c-\hat{c})^2).
  \end{align*}
\end{proof}

By definition, $c_n$ satisfies
\begin{align*}
  \lambda_{c_n}^n A_0(c_n)
   & = \lambda_{c_n}^n \Koenigs_{c_n}  \circ f_{c_n}^3(0)
  = \Koenigs_{c_n}(f_{c_n}^{2n+3}(0))
  \\
   & =1.
\end{align*}
Hence
\begin{align*}
  \lambda_{c_n}^{-n} & = A_0(c_n)
  = (c_n-\hat{c})B(c_n) + \overline{(c_n-\hat{c})}B^*(c_n)                                      \\
                     & = b_0(c_n-\hat{c}) + b_0^*\overline{(c_n-\hat{c})} + O(|c_n-\hat{c}|^2).
\end{align*}

\begin{lem}
  \footnote{Since $b_0$ and $b_0^*$ are real,
    we may omit to take complex conjugates here.
    However, we keep taking complex conjugate
    in order to make the difference between the arguments for holomorphic and anti-holomorphic cases clear.}
  \[
    c_n = \hat{c} + \frac{\overline{b_0}\lambda_{c_n}^{-n} - b_0^* \overline{\lambda_{c_n}^{-n}}}{|b_0|^2 - |b_0^*|^2} + o(\lambda_{c_n}^{-n}).
  \]
  In particular, $c_n -\hat{c} = O(\lambda_{c_n}^{-n})$.
\end{lem}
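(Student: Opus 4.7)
The plan is to treat the displayed formula just above the lemma as a conjugate-linear equation in the unknown $\delta_n := c_n - \hat{c}$, invert it using that $|b_0| \neq |b_0^*|$, and control the error by bootstrapping. Writing $\mu_n := \lambda_{c_n}^{-n}$, the input is
\begin{equation*}
 \mu_n = b_0 \delta_n + b_0^* \overline{\delta_n} + O(|\delta_n|^2).
\end{equation*}
Taking complex conjugates of this equation gives a second real-analytic identity, and together they form a $2 \times 2$ linear system in the unknowns $\delta_n$ and $\overline{\delta_n}$ with coefficient matrix
\begin{equation*}
 M := \begin{pmatrix} b_0 & b_0^* \\ \overline{b_0^*} & \overline{b_0} \end{pmatrix},
 \qquad \det M = |b_0|^2 - |b_0^*|^2.
\end{equation*}

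Next, I would invoke the previous lemma's conclusion $|b_0| \neq |b_0^*|$ to note that $\det M \neq 0$, so that $M$ is invertible. Applying Cramer's rule (equivalently, multiplying by $M^{-1}$) yields
\begin{equation*}
 \delta_n = \frac{\overline{b_0}\, \mu_n - b_0^* \, \overline{\mu_n}}{|b_0|^2 - |b_0^*|^2} + O(|\delta_n|^2),
\end{equation*}
which is exactly the claimed expansion once the error term is shown to be $o(\mu_n)$.

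The remaining step is the standard bootstrap. A first, crude reading of the inverted equation gives $\delta_n = O(|\mu_n|) + O(|\delta_n|^2)$; since $c_n \to \hat{c}$, $\delta_n$ is small, so the quadratic term is absorbed and we obtain $\delta_n = O(|\mu_n|)$. Feeding this back in, the error $O(|\delta_n|^2) = O(|\mu_n|^2)$ is genuinely $o(\mu_n)$ as $n \to \infty$ (since $\mu_n = \lambda_{c_n}^{-n} \to 0$ because $|\lambda_{c_n}| > 1$ for the repelling $\beta$-fixed point), giving both statements of the lemma simultaneously.

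I do not anticipate any serious obstacle here: the content is essentially the inversion of a conjugate-linear map, and the only structural input beyond routine algebra is the nondegeneracy $|b_0|^2 \neq |b_0^*|^2$ supplied by the previous lemma. The one point requiring a small amount of care is the bootstrap, where one must first use the inverted equation to deduce $\delta_n = O(\mu_n)$ before re-substituting to upgrade the error from $O(|\delta_n|^2)$ to $o(\mu_n)$; but this is harmless as long as $|\lambda_{c_n}| > 1$ is recorded, which is immediate from the fact that $\beta_{c_n}$ is a repelling fixed point (and $\lambda_{c_n} \to \lambda_{\hat c} = 16$).
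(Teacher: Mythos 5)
Your proof is correct and takes essentially the same route as the paper: the paper's proof likewise conjugates the relation $\lambda_{c_n}^{-n} = b_0(c_n-\hat{c}) + b_0^*\overline{(c_n-\hat{c})} + O(|c_n-\hat{c}|^2)$ to obtain a $2\times 2$ system in $c_n-\hat{c}$ and its conjugate and then solves it using $|b_0|\neq|b_0^*|$. The only difference is that you spell out the bootstrap needed to pass from $O(|c_n-\hat{c}|^2)$ to $o(\lambda_{c_n}^{-n})$, which the paper leaves implicit.
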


\begin{proof}
  It just follows by solving the following system of equations:
  \[
    \begin{cases}
      \lambda_{c_n}^{-n}
      = b_0(c_n-\hat{c}) + b_0^* \overline{(c_n-\hat{c})} + O(|c_n-\hat{c}|^2), \\
      \overline{\lambda_{c_n}^{-n}}
      = \overline{b_0^*}(c_n-\hat{c}) + \overline{b_0}\overline{(c_n-\hat{c})} + O(|c_n-\hat{c}|^2).
    \end{cases}
  \]
\end{proof}

Let $\Lambda = |\lambda_{\hat{c}}| (=16>1)$.
In this section, we prove the following:
\begin{thm}
  \label{thm:scaling limit}
  Let $\frac{3}{4}<\delta < 1$ and $\frac{1}{2}<\gamma < 1$.
  Then for $|Z|=O(\Lambda^{(1-\delta) n})$ and $t = O(\Lambda^{(1-\gamma)n})$,
  \[
    g_{c_n+\rho_n(t)}(Z) \xrightarrow{n \to \infty}
    \bar{Z}^2 + (|b_0|^2 - |b_0^*|^2)t
  \]
  and the convergence is exponentially fast, where
  \begin{align*}
    g_c(Z)    & = g_{n,c}(z)  := \alpha_n f_c^N(Z/\alpha_n),        &
    \rho_n(t) & := k_n\overline{b_0}t - \overline{k_n}b_0^*\bar{t}, &
    k_n       & := \frac{a_{\hat{c}}}{\alpha_n\lambda_{c_n}^n},
  \end{align*}
  and $\alpha_n$ is the constant satisfying%
  \footnote{We actually use the condition $\frac{\alpha_n}{\overline{\alpha_n}^2}\frac{\lambda_{\hat{c}}^nA_2(\hat{c})}{a_{\hat{c}}}=1$.
    Hence we can just let $\alpha_n = \frac{\lambda_{\hat{c}}^n A_2(\hat{c})}{a_{\hat{c}}}(<0)$ because all constants are real.}
  \begin{align*}
    |\alpha_n| & = \left|\frac{\lambda_{\hat{c}}^n A_2(\hat{c})}{a_{\hat{c}}}\right|, &
    \arg \alpha_n = -\frac{1}{3} \arg\left(\frac{\lambda_{\hat{c}}^n A_2(\hat{c})}{a_{\hat{c}}}\right).
  \end{align*}
\end{thm}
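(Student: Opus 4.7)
The plan is to factor $f_c^N = (f_c^2)^n \circ f_c^3$ and pass through the linearizing coordinate $\Koenigs_c$ at the repelling fixed point $\beta_c$, so that the $n$-fold iterate of $f_c^2$ becomes multiplication by $\lambda_c^n$ while the slow factor $f_c^3$ is captured by the Taylor expansion~\eqref{eqn:A_i}. Writing
\[
g_c^n(Z) = \alpha_n\,\KoenigsInv_c\!\bigl(\lambda_c^n\,\Koenigs_c(f_c^3(Z/\alpha_n))\bigr)
\]
and using $\KoenigsInv_c(u)=a_c^{-1}(u-1)+O((u-1)^2)$ near $u=1$ reduces the theorem to analyzing two main pieces: the constant-in-$Z$ piece $\lambda_c^n A_0(c)-1$, and the leading quadratic piece $\lambda_c^n A_2(\hat c)\,\bar z^{\,2}$.

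For the constant piece, I would combine the defining identity $\lambda_{c_n}^n A_0(c_n)=1$ with Lemma~\ref{lem:B and B*} to get, for $s=c-c_n$ small,
\[
\lambda_c^n A_0(c)-1 = \lambda_{c_n}^n\bigl(b_0\,s+b_0^{*}\,\bar s\bigr) + (\text{smaller}).
\]
A short direct computation, using that $b_0,b_0^{*}\in\R$, then yields the algebraic identity $b_0\,\rho_n(t)+b_0^{*}\,\overline{\rho_n(t)} = k_n(|b_0|^2-|b_0^{*}|^2)\,t$, so substituting $s=\rho_n(t)$ gives $\lambda_c^n A_0(c)-1 \approx (a_{\hat c}/\alpha_n)(|b_0|^2-|b_0^{*}|^2)\,t$. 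For the quadratic piece, $z=Z/\alpha_n$ yields $\lambda_c^n A_2(\hat c)\,\bar z^{\,2} = (\lambda_c^n A_2(\hat c)/\overline{\alpha_n}^{\,2})\,\bar Z^{\,2}$, and the very choice of $|\alpha_n|$ and $\arg\alpha_n$ recorded in the footnote is designed exactly so that $(\alpha_n/\overline{\alpha_n}^{\,2})(\lambda_{\hat c}^n A_2(\hat c)/a_{\hat c})=1$. Assembling the two pieces after applying $\KoenigsInv_c$ and multiplying by $\alpha_n$ gives
\[
g_c^n(Z) \approx \frac{a_{\hat c}}{a_c}\bigl(|b_0|^2-|b_0^{*}|^2\bigr)\,t + \frac{\alpha_n\,\lambda_c^n A_2(\hat c)}{a_c\,\overline{\alpha_n}^{\,2}}\,\bar Z^{\,2},
\]
whose main-order limit as $c\to\hat c$ is $\bar Z^{\,2}+(|b_0|^2-|b_0^{*}|^2)\,t$, as required.

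The main work, and the step I expect to be the principal obstacle, is the quantitative control of all the error terms. One has $|c_n-\hat c|=O(\Lambda^{-n})$ from the preceding lemma, $|z|=|Z|/|\alpha_n|=O(\Lambda^{-\delta n})$ from the hypothesis on $Z$, and $|\rho_n(t)|=O(\Lambda^{-(1+\gamma)n})$ from the definition of $\rho_n$. The contributions to bound are: the $O(|z|+|c-\hat c|)$ factor in~\eqref{eqn:A_i}; the $O((u-1)^2)$ remainder of $\KoenigsInv_c$; the successive replacements $\lambda_c^n\to\lambda_{c_n}^n\to\lambda_{\hat c}^n$ and the analogous $a_c\to a_{\hat c}$, $B(c)\to b_0$, $B^{*}(c)\to b_0^{*}$; and the $O(|s|^2)$ remainder in Lemma~\ref{lem:B and B*}. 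Once the prefactors $\alpha_n/a_c=O(\Lambda^n)$ and $\lambda_c^n=O(\Lambda^n)$ are taken into account, each contribution turns out to be $O(\Lambda^{-\eta n})$ for some $\eta>0$ precisely when $\delta>3/4$ and $\gamma>1/2$, which is why the theorem requires these ranges. The delicate point is that $\alpha_n$ and $\lambda_c^n$ grow exponentially, so innocent-looking error factors can be magnified; the exponents in the hypotheses are tuned exactly so that the growth of the main terms $\bar Z^{\,2}=O(\Lambda^{(2-2\delta)n})$ and $(|b_0|^2-|b_0^{*}|^2)\,t=O(\Lambda^{(1-\gamma)n})$ still dominates each such magnified error.
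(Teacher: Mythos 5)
Your proposal follows essentially the same route as the paper's proof: factor $f_c^N = (f_c^2)^n \circ f_c^3$, pass through the Koenigs coordinate, split expansion~\eqref{eqn:A_i} into the constant piece (controlled via Lemma~\ref{lem:B and B*} and the normalization $\lambda_{c_n}^n A_0(c_n)=1$) and the quadratic piece (eliminated by the choice of $\alpha_n$), substitute $s=\rho_n(t)$ using the algebraic identity you display, and track errors against the exponentially growing prefactors $\alpha_n$ and $\lambda_c^n$, with $\delta>3/4$ absorbing the $O((u-1)^2)$ remainder of $\KoenigsInv_c$ and $\gamma>1/2$ controlling the parameter-direction errors. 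The only cosmetic difference is that the paper substitutes an auxiliary $\tilde{\rho}_n$ built from $B(c_n), B^*(c_n)$ and then passes to $b_0, b_0^*$ at the end, absorbing an $O(\Lambda^{-\gamma n})$ discrepancy, whereas you work with $b_0, b_0^*$ from the start; this changes nothing in the resulting error estimate.
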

Observe that
\begin{align*}
  \alpha_n & = O(\Lambda^n),     &
  k_n      & = O(\Lambda^{-2n}).
\end{align*}

\begin{rem}
  By the theorem, the hyperbolic component $\cH_n$ centered at $c_n$ converges to $\cH_0$, up to scaling. The asymptotic aspect ratio (the limit of the ratio of the vertical and horizontal scaling factor) is computed as follows:

  By Lemma~\ref{lem:B and B*} and the fact $k_n>0$,
  for $t = \zeta + \xi i$,
  \begin{align*}
    \rho_n(t) & = k_n\frac{2^7}{15\pi^2}(7(\zeta + \xi i) - 2(\zeta - \xi i)) \\
              & = \frac{2^7k_n}{15\pi^2}(5\zeta + 9\xi i).
  \end{align*}
  Therefore, the asymptotic aspect ratio is 9:5.
\end{rem}

Let $\gamma$ and $\delta$ as in the theorem, and
consider $c=c_n+s$ with $|\frac{s}{c_n-\hat{c}}| = O(\Lambda^{-\gamma n})$
and $z = O(\Lambda^{-\delta n})$.
Then by \eqref{eqn:A_i},
\begin{equation}
  \label{eqn:Koe f^N}
  \begin{aligned}
    \Koenigs_c \circ f_c^N(z) & = \lambda_c^n \Koenigs_c \circ f_c^3(z)                   \\
                              & = \lambda_c^n A_0(c) + \lambda_c^n A_2(\hat{c}) \bar{z}^2
    (1+O(|z|+|c-\hat{c}|)).
  \end{aligned}
\end{equation}

\begin{lem}
  \label{lem:lambda^n}
  \[
    \lambda_c^n = \lambda_{\hat{c}}^n(1+O(n|c-\hat{c}|)) = \lambda_{c_n}^n(1+O(ns)).
  \]
  In particular, $\lambda_c^n=O(\Lambda^{-n})$ and $c_n -\hat{c} = O(\Lambda^{-n})$.
\end{lem}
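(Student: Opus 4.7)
The plan is to exploit the real-analytic dependence of the multiplier $\lambda_c = 4|\beta_c|^2$ on $c$ near $\hat c$. Writing $\lambda_c = \lambda_{\hat c}(1 + h(c))$ with $h$ real-analytic and $h(\hat c)=0$, one has $h(c) = O(|c - \hat c|)$ on a neighbourhood of $\hat c$. For $c$ so close to $\hat c$ that $|h(c)| < 1/2$,
\[
 \lambda_c^n \;=\; \lambda_{\hat c}^n \exp\!\bigl(n\log(1+h(c))\bigr) \;=\; \lambda_{\hat c}^n \exp\!\bigl(O(n|c-\hat c|)\bigr),
\]
using $|\log(1+h)| \le 2|h|$. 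In the regime where $n|c-\hat c|$ is bounded, the right-hand exponential equals $1 + O(n|c-\hat c|)$, yielding the first equality. The second equality is the same computation with $c_n$ in place of $\hat c$: expand $\lambda_c = \lambda_{c_n}(1 + k_n(c))$ so that $k_n(c) = O(|c-c_n|) = O(|s|)$ uniformly in $n$ (since the $c_n$ remain in a fixed compact neighbourhood of $\hat c$), then raise to the $n$-th power, using the scaling hypothesis $n|s| = O(1)$.

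For the ``in particular'' assertions, a short bootstrap closes the loop. By definition $c_n$ is a real parameter satisfying $Q_{c_n}^N(0)=0$, and $c_n \to \hat c = -2$. Consequently $\lambda_{c_n} \to \lambda_{\hat c}$, so for large $n$ one has $|\lambda_{c_n}| \ge \Lambda/2$; the preceding lemma then yields
\[
 |c_n - \hat c| = O\bigl(|\lambda_{c_n}|^{-n}\bigr) = O\bigl((\Lambda/2)^{-n}\bigr),
\]
and in particular $n|c_n - \hat c| \to 0$. The first equality therefore applies legitimately at $c = c_n$ and gives $\lambda_{c_n}^n = \lambda_{\hat c}^n(1 + o(1))$, so $|\lambda_{c_n}^{-n}| = O(\Lambda^{-n})$; substituting back into the preceding lemma produces the sharp bound $c_n - \hat c = O(\Lambda^{-n})$. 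The growth estimate for $\lambda_c^n$ (equivalently, $\lambda_c^{-n} = O(\Lambda^{-n})$) is then immediate from the first equality together with $|\lambda_{\hat c}| = \Lambda$.

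The main obstacle is precisely the bootstrap: the quantitative first equality requires the a priori boundedness of $n|c - \hat c|$, while the very bound $c_n - \hat c = O(\Lambda^{-n})$ is part of the conclusion. The key observation is that merely a crude lower bound $|\lambda_{c_n}| > 1 + \varepsilon$, which follows at once from $c_n \to \hat c$, already gives exponential decay of $|\lambda_{c_n}|^{-n}$ via the preceding lemma; a single refinement pass then upgrades this to the sharp rate $\Lambda^{-n}$.
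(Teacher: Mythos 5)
Your proof uses the same core mechanism as the paper: write $\lambda_c = \lambda_{\hat c}\bigl(1 + O(|c-\hat{c}|)\bigr)$, take logarithms, and exponentiate, in the regime where $n|c-\hat{c}|$ stays bounded. The paper's own proof is a one-liner that leaves the ``in particular'' assertions unjustified; your bootstrap argument for $c_n-\hat{c}=O(\Lambda^{-n})$, starting from $c_n\to\hat{c}$ and the preceding lemma, is correct and a welcome addition, as is your implicit reading of ``$\lambda_c^n=O(\Lambda^{-n})$'' as a typo for the reciprocal $\lambda_c^{-n}=O(\Lambda^{-n})$.
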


\begin{proof}
  Since $\lambda_c = \lambda_{\hat{c}}+O(|c-\hat{c}|)$, we have
  \begin{align*}
    n \log \lambda_c
     & = n\log \lambda_{\hat{c}} + n\log(1+O(|c-\hat{c}|)) \\
     & = n\log \lambda_{\hat{c}} + O(n|c-\hat{c}|)
  \end{align*}
  The second equality follows similarly.
\end{proof}

\begin{proof}[Proof of Theorem~\ref{thm:scaling limit}]
  By the above lemma,
  \begin{align*}
    \lambda_c^n A_0(c)
     & = \lambda_c^n(c-\hat{c}) B(c) + \lambda_c^n\overline{(c-\hat{c})}B^*(c) \\
     & = \lambda_{c_n}^n(1+O(ns))
    \left\{
    (c_n-\hat{c}+s)B(c_n+s) + \overline{(c_n-\hat{c}+s)}B^*(c_n+s)
    \right\}                                                                   \\
     & = \lambda_{c_n}^n(1+O(ns))                                              \\
     & \phantom{=} \times \left\{
    (c_n-\hat{c}+s)B(c_n)(1+O(s)) + \overline{(c_n-\hat{c}+s)}B^*(c_n)(1+O(s))
    \right\}                                                                   \\
     & = \lambda_{c_n}^n(1+O(ns))                                              \\
     & \phantom{=} \times \left\{
    (c_n-\hat{c})B(c_n) + \overline{(c_n-\hat{c})}B^*(c_n)
    + B(c_n)s + B^*(c_n)\bar{s}
    + O(|c_n-\hat{c}|s)
    \right\}                                                                   \\
     & = \lambda_{c_n}^n A_0(c_n)(1+O(ns))
    + \lambda_{c_n}^n(B(c_n)s + B^*(c_n)\bar{s})                               \\
     & \phantom{=} + \lambda_{c_n}^nO(ns^2)
    + \lambda_{c_n}^n(1+O(ns))O(|c-\hat{c}|s)                                  \\
     & = 1 + \lambda_{c_n}^n(B(c_n)s + B^*(c_n)\bar{s})
    + O(ns) + O(ns\Lambda^{-\gamma n}) + O(s)                                  \\
     & = 1 +\lambda_{c_n}^n(B(c_n)s + B^*(c_n)\bar{s})
    + O(ns).
  \end{align*}
  Hence it follows from \eqref{eqn:Koe f^N} that
  \begin{align*}
    \Koenigs_c \circ f_c^N(z) - 1
     & = \lambda_c^n (A_0(c) + A_2(\hat{c})\bar{z}^2(1+O(|z|+|c-\hat{c}|))) - 1 \\
     & = \lambda_c^n A_2(\hat{c})\bar{z}^2 (1+O(|z|+|c-\hat{c}|))
    + \lambda_{c_n}^n (B(c_n)s + B^*(c_n)\bar{s}+O(ns))                         \\
     & = \lambda_{\hat{c}}^n A_2(\hat{c})\bar{z}^2 (1+O(|z|+n|c-\hat{c}|))
    + \lambda_{c_n}^n (B(c_n)s + B^*(c_n)\bar{s}+O(ns)).
  \end{align*}
  Since we assume $z = O(\Lambda^{-\delta n})$ and $\delta<1$, we have
  \begin{align*}
    \lambda_{\hat{c}}^n \bar{z}^2(|z|+n|c-\hat{c}|)
     & = O(\Lambda^n \Lambda^{-2\delta n}(\Lambda^{-\delta n} + n\Lambda^{-n})) \\
     & = O(\Lambda^{(1-3\delta)n}).
  \end{align*}
  By the assumption, $s=O(|c-\hat{c}|\Lambda^{-\gamma n}) = O(\Lambda^{-(1+\gamma)n})$ and $\gamma<1$, it follows that
  \begin{align*}
    \Koenigs \circ f_c^N(z) - 1
     & = \lambda_{\hat{c}}^n A_2(\hat{c})\bar{z}^2
    + \lambda_{c_n}^n B(c_n) s + \lambda_{c_n}^n B^*(c_n) \bar{s}
    + O(ns) + O(\Lambda^{(1-3\delta)n})                                       \\
     & = O(\Lambda^n\Lambda^{-2\delta n})+ O(\Lambda^n\Lambda^{-(1+\gamma)n})
    +O(n\Lambda^{-(1+\gamma)n}) + O(\Lambda^{(1-3\delta)n})                   \\
     & = O(\Lambda^{(1-2\delta)n}) +O(\Lambda^{-\gamma n}).
  \end{align*}
  Since $a_c = a_{\hat{c}}+O(|c-\hat{c}|)=a_{\hat{c}}+O(\Lambda^{-n})$
  and $a_{\hat{c}} \ne 0$,
  we have
  \begin{align*}
    f_c^N(z) & = \KoenigsInv_c \circ \Koenigs_c \circ f_c^N(z)                         \\
             & = \frac{1}{a_c} \left\{
    \lambda_{\hat{c}}^n A_2(\hat{c}) \bar{z}^2
    + \lambda_{c_n}^n B(c_n)s + \lambda_{c_n}^n B^*(c_n) \bar{s}
    \right\}                                                                           \\
             & \phantom{=} + O(n\Lambda^{-(1+\gamma)n}) +O(\Lambda^{(1-3\delta)n})
    + O(\Lambda^{(2-4\delta)n}) + O(\Lambda^{-2\gamma n})                              \\
             & = \frac{\lambda_{\hat{c}}^n A_2(\hat{c})}{a_{\hat{c}}} \bar{z}^2
    + \frac{\lambda_{c_n}^n}{a_{\hat{c}}}(B(c_n)s + B^*(c_n)\bar{s})                   \\
             & \phantom{=} + O(\Lambda^{-2\gamma n}) + O(\Lambda^{(1-3\delta)n})
    + O(\Lambda^{(2-4\delta)n}) + O(\Lambda^{-2\delta n}) + O(\Lambda^{-(1+\gamma) n}) \\
             & = \frac{\lambda_{\hat{c}}^n A_2(\hat{c})}{a_{\hat{c}}} \bar{z}^2
    + \frac{\lambda_{c_n}^n}{a_{\hat{c}}}(B(c_n)s + B^*(c_n)\bar{s})
    + O(\Lambda^{-2\gamma n}) + O(\Lambda^{(2-4\delta)n}).
  \end{align*}

  Let $\alpha_n$ be as in the theorem and let $Z = \alpha_n z$.
  Then $\alpha_n = O(\Lambda^n)$ and $f_c^N$ is conjugate to
  \begin{align*}
    g_c(Z) & = \alpha_n f_c^N(Z/\alpha_n) \\
           & = \bar{Z}^2 + \alpha_n
    \left\{
    \lambda_{c_n}^n (B(c_n)s + B^*(c_n)\bar{s})
    + O(\Lambda^{-2\gamma n}) + O(\Lambda^{(2-4\delta)n})
    \right\}                              \\
           & = \bar{Z}^2 + \xi_n(s)
    + O(\Lambda^{(1-2\gamma) n}) + O(\Lambda^{(3-4\delta)n})
  \end{align*}
  where
  \[
    \xi_n(s) :=
    \frac{\alpha_n\lambda_{c_n}^n}{a_{\hat{c}}} (B(c_n)s + B^*(c_n)\bar{s}).
  \]
  Now recall that $k_n = \frac{a_{\hat{c}}}{\alpha_n\lambda_{c_n}^n}$.
  Let
  \[
    s = \tilde{\rho}_n(t)
    := k_n \overline{B(c_n)}t - \overline{k_n} B^*(c_n) \bar{t}.
  \]
  Then for $t = O(\Lambda^{(1-\gamma)n})$, we have
  \begin{align*}
    \frac{s}{c-\hat{c}} & = O(\Lambda^{-\gamma n}),
  \end{align*}
  and
  \begin{align*}
    \xi_n(s) & = \frac{\alpha_n\lambda_{c_n}^n}{a_{\hat{c}}}
    (B(c_n)(k_n \overline{B(c_n)}t - \overline{k_n} B^*(c_n) \bar{t})
    + B^*(c_n)(\overline{k_n}B(c_n) \bar{t} - k_n \overline{B^*(c_n)} t)) \\
             & = (|B(c_n)|^2 - |B^*(c_n)|^2)t
    = (|b_0|^2 - |b_0^*|^2)t - O(\Lambda^{-\gamma n}).
  \end{align*}
  Therefore, we have
  \begin{equation}
    \label{eqn:scaling limit}
    g_{c_n+\rho_n(t)}(z)
    - \left\{ \bar{z}^2 + (|b_0|^2 - |b_0^*|^2)t \right\}
    = O(\Lambda^{(3-4\delta)n}) + O(\Lambda^{(1-2\gamma)n}) + O(\Lambda^{-\gamma n}).
  \end{equation}
  Since $\gamma \in (\frac{1}{2},1)$ and $\delta \in (\frac{3}{4},1)$,
  $1-2\gamma$ and $3-4\delta$ are negative.
  Hence $g_{c_n+\rho_n(t)}$ converges exponentially to $\bar{z}^2+(|b_0|^2-|b_0^*|^2)t$.
\end{proof}

\section{Construction of polynomial-like restrictions}
\label{sec:poly-like}

We have proved local uniform convergence of a sequence of rescaled family in the previous section.
However, more detailed estimates are required because we need to regard a rescaled map as a (anti-holomorphic) quadratic-like map with large domain and range, and to discuss its exponential convergence to its straightening (i.e., the quadratic polynomial hybrid equivalent to it).

To do this end, we first construct polynomial-like restrictions concretely in this section.
In the next section, we give an explicit construction of hybrid conjugacies to its straightening, and show the exponential convergence of hybrid conjugacies to the identity in Section~6.

In the following, we consider the case $c = c_n+\rho_n(t)$ with
\[
  t = O(1)\ (=O(\Lambda^{(1-\gamma)n})\text{ for any }\gamma \in (\tfrac{1}{2},1)).
\]
For $f_{-2}$, the external rays of angles $1/3$ and $2/3$ land at the
``$\alpha$-fixed point'' $-1$. Since $-1$ is repelling,
$R_c(1/3)$ and $R_c(2/3)$ land at the same point when $n$ is sufficiently large.

We construct Yoccoz puzzles from these rays.
Let $D^0=D^0_c$ be the puzzle piece of depth $1$ containing $0$ for $f_c$.
Namely, let
\[
  \Gamma_0 = \Boettcher_c^{-1}(\{|z|=2\}) \cup \overline{R_c(1/3) \cup R_c(2/3)}
\]
and let $D^0$ be the closure of the bounded component of $\C \setminus f_c^{-1}(\Gamma_0)$ containing $0$.

Let
\[
  D^1 = D_c^1 := f_c^{-1}(-\Koenigs_c^{-1}(\lambda_c^{-n-1}\Koenigs_c(D_0))).
\]
In other words, $D^1$ is the puzzle piece of depth $N=2n+3$ containing $0$.
Hence we have the following:
\begin{lem}
  The set $D_c^1$ is the component of $f_c^{-N}(D^0)$ containing $0$ and $f_c^N:\Int D^1 \to \Int D^0$ is a quadratic-like mapping.
\end{lem}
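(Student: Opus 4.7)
The plan is to identify $D^1$ with the $f_c^N$-pullback of $D^0$ based at $0$, and then verify the three conditions for a polynomial-like map: a proper degree-$2$ branched cover between Jordan domains with $D^1 \Subset D^0$.

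First, I would analyze the intermediate set $U := -\Koenigs_c^{-1}(\lambda_c^{-n-1}\Koenigs_c(D^0))$, so that $D^1 = f_c^{-1}(U)$. The set $V := \Koenigs_c^{-1}(\lambda_c^{-n-1}\Koenigs_c(D^0))$ is a tiny Jordan domain near $\beta_c$ (of $\Koenigs$-diameter $O(\Lambda^{-n-1})$), and the linearization equation $\Koenigs_c\circ f_c^2 = \lambda_c\Koenigs_c$ gives that $(f_c^2)^{n+1}|_V\colon V \to D^0$ is a conformal isomorphism. Since $f_c^2(-z) = f_c^2(z)$, the same holds on $U = -V$, so $f_c^{N-1}|_U = (f_c^2)^{n+1}|_U \colon U \to D^0$ is a conformal isomorphism of Jordan domains. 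The univalence of $\Koenigs_c$ on $D^0$ itself is standard Koenigs theory at the repelling fixed point $\beta_c$; in the limit $c=\hat c$ one has the explicit formula $\KoenigsInv_{\hat c}(w) = 2\cos(\pi\sqrt{w}/2)$ of \eqref{eqn:Poincare for Chebyshev}, univalent on $\{|w|<4\}$, and this univalence persists uniformly for $c$ close to $\hat c$.

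Second, I would check that $0 \in D^1$, i.e., $c = f_c(0) \in U$. Applying the conformal isomorphism $f_c^{N-1}|_U$ reduces this to $f_c^N(0) \in D^0$, which follows from Theorem~\ref{thm:scaling limit} with $Z=0$ and $t=O(1)$: we obtain $\alpha_n f_c^N(0) = O(1)$, so $f_c^N(0) = O(\Lambda^{-n})$ lies deep inside the fixed-size region $D^0$.

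Third, I would use the local structure of $f_c$ at its unique critical point $0$. Since $c \in U$, the preimage $f_c^{-1}(U)$ is a single Jordan domain (hence equal to the component of $f_c^{-N}(D^0)$ containing $0$), on which $f_c$ is a proper branched double cover onto $U$. Composing with the conformal isomorphism $f_c^{N-1}|_U\colon U \to D^0$ yields a proper branched double cover $f_c^N\colon \Int D^1 \to \Int D^0$. Finally, since $|f_c(z)-c| \sim |z|^2$ near $0$, one has $\diam D^1 = O(\Lambda^{-n/2})$ while $\diam D^0$ is bounded below, so $D^1 \Subset D^0$ for all large $n$. The only nontrivial bookkeeping is verifying univalence of $\Koenigs_c$ on the nested sets we use; this is the main obstacle in a rigorous write-up, but it reduces to the uniform univalence of $\KoenigsInv_c$ on a fixed disk around $0$ for $c$ near $\hat c$, together with the observation that $V$ avoids the discrete critical set of $\KoenigsInv_c$ once $n$ is large enough.
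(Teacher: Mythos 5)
The paper gives no proof of this lemma: after defining $D^1_c := f_c^{-1}(-\Koenigs_c^{-1}(\lambda_c^{-n-1}\Koenigs_c(D^0)))$, the authors remark that this is ``the puzzle piece of depth $N=2n+3$ containing $0$'' and record the lemma as an immediate consequence, leaving the verification to standard Yoccoz puzzle theory. Your proposal fills in precisely what the paper leaves implicit, and it follows the route the authors clearly intend: split $f_c^N = f_c^{N-1}\circ f_c$, show the outer factor $f_c^{N-1}|_U \colon U \to D^0$ is a conformal isomorphism via the Koenigs linearizer, and note the inner factor $f_c\colon D^1 \to U$ is a proper degree-$2$ branched cover because it contains the critical point $0$. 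The key observations are all correct: $f_c(-z)=f_c(z)$ (so every iterate is even, transferring the isomorphism from $V$ to $U=-V$), $\diam D^1 = O(\Lambda^{-n/2})$ for compact containment (the paper records this bound immediately after the lemma), and the role of univalence of $\Koenigs_c$ along the escaping orbit as the technical crux.

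Two small points deserve tightening. The step ``$c\in U$ reduces to $f_c^N(0)\in D^0$'' is phrased as a one-directional application of the conformal isomorphism $f_c^{N-1}|_U$, which only yields $c\in U \Rightarrow f_c^{N-1}(c)\in D^0$, not the converse you actually need; the genuine equivalence comes from the identity $\lambda_c^{n+1}\Koenigs_c(-c) = \Koenigs_c(f_c^N(0))$ (using evenness of $f_c$ and the Abel/linearization relation) together with injectivity of $\Koenigs_c$ on the relevant region, so it should be placed under the same univalence hypothesis you already flag. And that hypothesis is concrete for the Chebyshev limit via \eqref{eqn:Poincare for Chebyshev}: $\KoenigsInv_{\hat c}$ has its first critical point at $w=4$ and is injective on $\{|w|<4\}$, which contains $\Koenigs_{\hat c}(D^0)$ (compare the computation $\Koenigs_{\hat c}(D^0\cap K(f_{\hat c}))=[4/9,16/9]$ used later in the paper), and persistence of univalence for $c$ near $\hat c$ is then automatic. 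With those two remarks made explicit, your argument closes the gap the paper leaves to the reader.
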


Let $D^k_c := (f_c^N|_{D^1_c})^{-k}(D_0)$ for $k>1$.
It is easy to see from the definition that
$\diam D^1 = O(\lambda_c^{-\frac{n}{2}})$.
Therefore, $f_c^N:D^k_c \to D^{k-1}_c$ is close to a composition of a linear map
and $z^2$ for $k \ge 2$. More precisely,

\begin{lem}
  \label{lem:rough approximation}
  Fix $k \ge 1$. Then $\diam D^k_c \asymp \Lambda^{-(1-2^{-k})n}$, and for $z \in D^{k+1}_c$, we have
  \[
    f_c^N(z)
    + \frac{\lambda_{c_n}^{n+1}\Koenigs_{c_n}'(-c_n)}{\Koenigs_{c_n}'(0)}
    \bar{z}^2
    = O(\Lambda^{-n}).
  \]
\end{lem}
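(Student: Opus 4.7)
The plan is to derive both assertions from a Taylor expansion of $f_c^N$ in the linearizing coordinate $\Koenigs_c$ at the $\beta$-fixed point, combined with a short induction on the depth $k$. Using the factorization $f_c^N = (f_c^2)^n \circ f_c^3$, the functional equation $\Koenigs_c \circ (f_c^2)^n = \lambda_c^n \Koenigs_c$, and the holomorphic expansion $f_c^3(z) = f_c^2(c+\bar z^2) = v_c + (f_c^2)'(c)\bar z^2 + O(|z|^4)$ with $v_c := f_c^3(0)$, I get
\[
\Koenigs_c(f_c^N(z)) = \lambda_c^n\Koenigs_c(v_c) + \lambda_c^n\Koenigs_c'(v_c)(f_c^2)'(c)\bar z^2 + O(\lambda_c^n|z|^4).
\]
Since $\lambda_{c_n}^n\Koenigs_{c_n}(v_{c_n}) = 1$ by the defining property of $c_n$, and since $|c-c_n| = O(\Lambda^{-2n})$ for $t=O(1)$, arguments paralleling those in the proof of Theorem~\ref{thm:scaling limit} show that replacing $c$ by $c_n$ in each remaining coefficient costs only an $O(\Lambda^{-n})$ absolute error. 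Setting $C_n := \lambda_{c_n}^n\Koenigs_{c_n}'(v_{c_n})(f_{c_n}^2)'(c_n)$ (of modulus $\asymp \Lambda^n$), the expansion becomes
\[
\Koenigs_c(f_c^N(z)) = 1 + C_n\bar z^2 + O(\Lambda^{-n}) + O(\lambda_c^n|z|^4).
\]

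Next I identify $C_n$ with the coefficient appearing in the lemma. The key algebraic observation is the identity $f_{c_n}^2(-c_n) = v_{c_n}$, which holds because $c_n \in \R$ and $f_c^2(z) = (z^2+\bar c)^2 + c$: both sides equal $(c_n^2+c_n)^2+c_n$. Differentiating the functional equation $\Koenigs_{c_n} \circ f_{c_n}^2 = \lambda_{c_n}\Koenigs_{c_n}$ at $w = -c_n$ then yields $\Koenigs_{c_n}'(v_{c_n})(f_{c_n}^2)'(-c_n) = \lambda_{c_n}\Koenigs_{c_n}'(-c_n)$, while a direct computation from $(f_c^2)'(z) = 4z(z^2+\bar c)$ gives $(f_{c_n}^2)'(-c_n) = -(f_{c_n}^2)'(c_n)$ for real $c_n$. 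Together these give $C_n = -\lambda_{c_n}^{n+1}\Koenigs_{c_n}'(-c_n)$.

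I then run an induction on $k \ge 1$. For $k=1$: the quartic error $\lambda_c^n|z|^4$ is $O(\Lambda^{-n})$ already for $|z| \lesssim \Lambda^{-n/2}$, and $\Koenigs_c(D^0)$ is a bounded neighborhood of $1$, so solving $C_n\bar z^2 \in \Koenigs_c(D^0) - 1$ up to error $O(\Lambda^{-n})$ gives $\diam D^1 \asymp \Lambda^{-n/2}$. Assuming inductively that $\diam D^k \asymp \Lambda^{-(1-2^{-k})n}$ for some $k \ge 1$, for $z \in D^{k+1}$ the quantity $W := \Koenigs_c(f_c^N(z)) - 1$ satisfies $|W| = O(\diam D^k)$, whence $W^2 = O(\Lambda^{-2(1-2^{-k})n}) = O(\Lambda^{-n})$ since $k \ge 1$. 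Inverting via $\KoenigsInv_c$ near $1$, using $\KoenigsInv_c(1) = 0$ (from $\Koenigs_c(0) = 1$) and $\KoenigsInv_c'(1) = 1/\Koenigs_{c_n}'(0) + O(\Lambda^{-n})$, I obtain
\[
f_c^N(z) = \frac{C_n}{\Koenigs_{c_n}'(0)}\bar z^2 + O(\Lambda^{-n}) = -\frac{\lambda_{c_n}^{n+1}\Koenigs_{c_n}'(-c_n)}{\Koenigs_{c_n}'(0)}\bar z^2 + O(\Lambda^{-n}),
\]
which is the claimed approximation. Solving this for $\bar z^2$ in terms of $f_c^N(z) \in D^k$ then yields $\diam D^{k+1} \asymp \sqrt{\diam D^k / |C_n|} \asymp \Lambda^{-(1-2^{-k-1})n}$, completing the induction.

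The main point of the proof is the algebraic identification of $C_n$ through the unexpected identity $f_{c_n}^2(-c_n) = v_{c_n}$, which hinges on $c_n \in \R$ and is the reflection of the symmetry $z \mapsto -z$ for the real quadratic map; the remaining work is careful tracking of error terms, much of which is already codified in the estimates used to prove Theorem~\ref{thm:scaling limit}.
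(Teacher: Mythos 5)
Your argument is correct and essentially reproduces the paper's proof: both factor $f_c^N$ through the linearizing coordinate, Taylor-expand, use the criticality relation at $c_n$ to kill the constant term, control all replacements using $|c-c_n|=O(\Lambda^{-2n})$, and then run a depth induction for the diameter estimate. The only organizational difference is where the symmetry of $f_c^2$ is invoked: the paper uses $f_c^2(-w)=f_c^2(w)$ upfront to write $f_c^N(z)=\KoenigsInv_c(\lambda_c^{n+1}\Koenigs_c(-(\bar z^2+c)))$ and then expands $\Koenigs_c$ at $-c_n$, whereas you expand at $v_c=f_c^2(c)$ first and recover $-\lambda_{c_n}\Koenigs_{c_n}'(-c_n)$ afterwards via the chain rule — the same computation, reordered. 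One small inaccuracy: your ``unexpected identity'' $f_{c_n}^2(-c_n)=v_{c_n}$ does not actually hinge on $c_n\in\R$; the holomorphic part $f_c^2(w)=(w^2+\bar c)^2+c$ is even in $w$ for \emph{every} $c$, so $f_c^2(-c)=f_c^2(c)=v_c$ always holds (and likewise $(f_c^2)'(-c)=-(f_c^2)'(c)$), and this evenness is precisely the fact the paper exploits implicitly.
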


Here, $A \asymp B$ stands for $A=O(B)$ and $B=O(A)$.
Note that all constants can depend on $k$ (and $m$ which appears later),
but uniform on $n$,
since we will fix $m$ and consider $k \le m$, then we let $n$ tends to infinity.

\begin{proof}
  First observe that $c-c_n = \rho_n(t) = O(\Lambda^{-2n})$.
  Take $\frac{1}{2} \le a < 1$ and let
  $z = O(\Lambda^{-an})$.
  Recall that $\lambda_c^m = \lambda_{c_n}^m(1+O(m(c-c_n)))$ by Lemma~\ref{lem:lambda^n}.
  Hence it follows that
  \begin{align*}
    \Koenigs_c(-(\bar{z}^2+c))
     & = \Koenigs_c(-c_n) - \Koenigs_c'(-c_n)(\bar{z}^2+c-c_n)
    + O((\bar{z}^2+c-c_n)^2)                                       \\
     & = \Koenigs_c(-c_n) - \Koenigs_c'(-c_n)(\bar{z}^2+\rho_n(t))
    + O(\Lambda^{-4an}),
  \end{align*}
  and
  \begin{align*}
    \lambda_c^{n+1} \Koenigs_c(-c_n)
     & = \lambda_{c_n}^{n+1} \Koenigs_{c_n}(-c_n)
    + \lambda_{c_n}^{n+1}(\Koenigs_c(-c_n) - \Koenigs_{c_n}(-c_n))     \\
     & \quad + (\lambda_c^{n+1} - \lambda_{c_n}^{n+1})\Koenigs_c(-c_n) \\
     & = \lambda_{c_n}^{n+1} \Koenigs_{c_n}(-c_n)
    + \lambda_{c_n}^{n+1}O(c-c_n)
    + \lambda_{c_n}^{n+1} O((n+1)(c-c_n))O(\beta_{c_n}+c_n)            \\
     & = \lambda_{c_n}^{n+1} \Koenigs_{c_n}(-c_n)
    + O(\Lambda^{-n}) + O(n\Lambda^{-2n})                              \\
     & = \lambda_{c_n}^{n+1} \Koenigs_{c_n}(-c_n)
    +O(\Lambda^{-n}),
  \end{align*}
  so
  \begin{align*}
    \lambda_c^{n+1} \Koenigs_c(-(\bar{z}^2+c))
     & = \lambda_{c_n}^{n+1} \Koenigs_{c_n}(-c_n) + O(\Lambda^{-n})
    - \lambda_{c_n}^{n+1} \Koenigs_{c_n}'(-c_n) (\bar{z}^2+\rho_n(t)) \\
     & \quad + (\lambda_{c_n}^{n+1} \Koenigs_{c_n}'(-c_n) -
    \lambda_c^{n+1} \Koenigs_c'(-c_n))O(\Lambda^{-2an})
    + O(\Lambda^{-(4a-1)n})                                           \\
     & = \lambda_{c_n}^{n+1} \Koenigs_{c_n}(-c_n)
    - \lambda_{c_n}^{n+1} \Koenigs_{c_n}'(-c_n) (\bar{z}^2+\rho_n(t)) \\
     & \quad + O(\Lambda^{-n}) + (O(n\Lambda^{-n})
    + O(\Lambda^{-n}))O(\Lambda^{-2an}) + O(\Lambda^{-(4a-1)n})       \\
     & = \lambda_{c_n}^{n+1} \Koenigs_{c_n}(-c_n)
    - \lambda_{c_n}^{n+1} \Koenigs_{c_n}'(-c_n) (\bar{z}^2+\rho_n(t))
    + O(\Lambda^{-n}),
  \end{align*}
  and since $\KoenigsInv_{c_n}(\lambda_{c_n}^{n+1}\Koenigs_{c_n}(-c_n))
    = f_{c_n}^N(0) = 0$,
  \begin{align*}
    f_c^N(z)
     & = \KoenigsInv_c(\lambda_c^{n+1}\Koenigs_c(-(\bar{z}^2+c)))                       \\
     & = \KoenigsInv_{c_n}(\lambda_c^{n+1}\Koenigs_c(-(\bar{z}^2+c)))
    + O(\Lambda^{-2n})                                                                  \\
     & = \frac{1}{\Koenigs_{c_n}'(0)}
    (-\lambda_{c_n}^{n+1}\Koenigs_{c_n}'(-c_n)(\bar{z}^2+\rho_n(t))
    + O(\Lambda^{-n})) + O(\Lambda^{-2n})                                               \\
     & = -\frac{\lambda_{c_n}^{n+1}\Koenigs_{c_n}'(-c_n)}{\Koenigs_{c_n}'(0)} \bar{z}^2
    + O(\Lambda^{-n}) \quad (=O(\Lambda^{-(2a-1)n})).
  \end{align*}
  Therefore, if $z \in D_c^1$ and $f_c^N(z) = O(\Lambda^{-(2a-1)n})$,
  then $z = O(\Lambda^{-an})$.
  Hence $\diam D_c^k \asymp \Lambda^{-(1-2^{-k})n}$ follows by induction.
\end{proof}

\section{Straightening}
\label{sec:straightening}

Fix $m \ge 0$. Consider a polynomial-like restriction
\[
  f_c^N: D_c^{m+1} \to D_c^m.
\]
The \emph{straightening} is a quadratic anti-holomorphic polynomial
hybrid equivalent to it.

To obtain an exponential convergence to the straightening in a large domain,
here we give a precise construction of a hybrid conjugacy and the straightening,
following Douady and Hubbard \cite{MR816367}.

First we rescale as before, so let
$g_c(z) := \alpha_n f_c^N(\frac{z}{\alpha_n})$ and
$\tilde{D}_c^m := \alpha_n D_c^m$. Then
\[
  g_c:\tilde{D}_c^{m+1} \to \tilde{D}_c^m
\]
is quadratic-like.

Since we consider the case
$|\frac{c-c_n}{c_n+2}| = O(\Lambda^{-2n})/O(\Lambda^{-n}) = O(\Lambda^{-n})$
and $|z| < \diam D_c^{m+1} = O(\Lambda^{-(1-2^{-m-1})n})$,
we can apply Theorem~\ref{thm:scaling limit} and \eqref{eqn:scaling limit}
with $\gamma <1$ sufficiently close to $1$ and $\delta = 1-2^{-m-1}$ ($m \ge 2$).
Namely, we have
\begin{equation}
  \label{eqn:convergence}
  g_c(z) = \bar{z}^2 + s + O(\Lambda^{-(1-2^{-m+1})n})
\end{equation}
where $s = (|b_0|^2 - |b_0^*|^2)t$, and $g_c$ is close to $f_s(z)=\bar{z}^2+s$.
(As mentioned before, $m$ will be fixed, so constants can depend on $m$.)

Next we enlarge the domain of
$g_c:\tilde{D}_c^{m+1} \to \tilde{D}_c^m$.
\begin{lem}
  \label{lem:round disks}
  For sufficiently large $n$,
  there exist $r_m$ and $R_m$ with $0 < r_m<R_m$ such that
  \begin{enumerate}
    \item
          $r_m \asymp R_m \asymp \Lambda^{-(1-2^{-m})n}$, and
    \item
          for any $c=c_n+\rho_n(t)$ with $t=O(1)$,
          $$
            D_c^{m+1} \subset \D(r_m)
            \subset D_c^{m} \subset \D(R_m),
          $$
          where $\D(r):=\{z \in \C \mid |z|<r \}$.
  \end{enumerate}
\end{lem}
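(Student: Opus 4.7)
The strategy is to upgrade the statement to a sandwich bound on each individual puzzle piece. I would prove by induction on $m$ that there exist $0 < r'_m < R'_m$ with $r'_m \asymp R'_m \asymp \Lambda^{-(1-2^{-m})n}$ such that
\[
 \D(r'_m) \subset D_c^m \subset \D(R'_m)
\]
for every $c = c_n + \rho_n(t)$ with $t = O(1)$ and every sufficiently large $n$. The lemma then follows immediately by setting $r_m := r'_m$ and $R_m := R'_m$: since $R'_{m+1} \asymp \Lambda^{-(1-2^{-(m+1)})n}$ is exponentially smaller than $r'_m \asymp \Lambda^{-(1-2^{-m})n}$, for $n$ large we obtain $D_c^{m+1} \subset \D(R'_{m+1}) \subset \D(r'_m) \subset D_c^m \subset \D(R'_m)$.

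The base case $m = 0$ is geometric and soft. The set $D_c^0$ is a Jordan domain bounded by arcs of $R_c(1/3)$, $R_c(2/3)$ and a piece of an equipotential of the B\"ottcher coordinate, all depending continuously on $c$ in a neighborhood of $\hat c = -2$, with $0$ staying in its interior. Compactness yields uniform constants $0 < r'_0 < R'_0$ independent of $c$ near $\hat c$.

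For the inductive step, I would invoke the local model $f_c^N(z) = -K_n \bar z^2 + O(\Lambda^{-n})$ with $K_n := \lambda_{c_n}^{n+1}\Koenigs_{c_n}'(-c_n)/\Koenigs_{c_n}'(0)$ and $|K_n| \asymp \Lambda^n$, which holds uniformly on any disk $\{|z| \leq C\Lambda^{-an}\}$ with $\tfrac12 \leq a < 1$. For the outer bound, any $z \in D_c^{m+1}$ has $f_c^N(z) \in D_c^m \subset \D(R'_m)$, so $|K_n||z|^2 \leq R'_m + O(\Lambda^{-n})$, giving $|z| \leq (2R'_m/|K_n|)^{1/2} =: R'_{m+1} \asymp \Lambda^{-(1-2^{-(m+1)})n}$. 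For the inner bound, put $r'_{m+1} := (r'_m/(2|K_n|))^{1/2}$; any $z \in \D(r'_{m+1})$ maps into $\D(r'_m) \subset D_c^m$ under $f_c^N$, and since $\D(r'_{m+1})$ is connected and contains the critical point $0$, it lies in the component $D_c^{m+1}$ of $(f_c^N)^{-1}(D_c^m)$.

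The one (mild) delicate point to watch is that Lemma \ref{lem:rough approximation} is phrased for $z \in D_c^{m+1}$, while the inner-bound step needs the same approximation on the candidate disk $\D(r'_{m+1})$ \emph{before} the inclusion $\D(r'_{m+1}) \subset D_c^{m+1}$ has been verified. This is harmless, because the proof of Lemma \ref{lem:rough approximation} actually derives the approximation from the size hypothesis $|z| = O(\Lambda^{-an})$ with $\tfrac12 \leq a < 1$ alone, without any appeal to the puzzle structure. All implicit constants depend on $m$ but not on $n$, which suffices since $m$ is fixed while $n \to \infty$.
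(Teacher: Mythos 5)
Your proof is correct and takes essentially the same route as the paper: both establish the base case $m=0$ by soft continuity/compactness of $D_c^0$ near $\hat c$, and then induct on $m$ using the quadratic-map-like local model from Lemma~\ref{lem:rough approximation}. The only cosmetic difference is that you upgrade the claim to a two-sided sandwich $\D(r'_m)\subset D_c^m\subset \D(R'_m)$ and spell out the inductive step, while the paper states the inductive inequality $K^{-1}\Lambda^{-n/2}|f_c^N(z)|^{1/2}\le|z|\le K\Lambda^{-n/2}|f_c^N(z)|^{1/2}$ and leaves the iteration implicit; your observation that the rough approximation depends only on the size hypothesis $|z|=O(\Lambda^{-an})$, not on puzzle membership, is the same point needed implicitly in the paper's version.
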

\begin{proof}
  There are $r_0$ and $R_0$ with $0 <r_0 <R_0$
  independent of $n$ such that $\partial D_c^0
    \subset \{ z \in \C \mid r_0 \le |z| \le R_0 \}$
  for any $c=c_n+\rho_n(t)$ with $t=O(1)$.
  By Lemma \ref{lem:rough approximation},
  there exists a constant $K$ such that
  for sufficiently large $n$
  and for
  $c=c_n+\rho_n(t)$ with $t=O(1)$,
  we have
  $$
    K^{-1}\Lambda^{-n/2} |f_c^N(z)|^{1/2}
    \le |z| \le
    K \Lambda^{-n/2} |f_c^N(z)|^{1/2}
  $$
  if $f_c^N(z) \in \overline{\D(R_0)} \setminus D_c^{m+1}$.
  Now the existence of $r_m$ and $R_m$ simply follows by induction.
\end{proof}

Let $R:= |\alpha_n| R_m \asymp \Lambda^{2^{-m} n}$
such that $\tilde{D}_c^m \subset \D(R)$.
Let
\begin{align*}
  U =\D(R), \qquad
  U' = U_1 := g_c^{-1}(U),
  \quad\text{and}\quad
  U_2' := f_s^{-1}(U).
\end{align*}
Then $g_c:U_1' \to U$ and $f_s:U_2' \to U$ are quadratic-like mappings.
In particular, $g_c:U_1' \to U$ still satisfies (\ref{eqn:convergence}).

Let
\begin{align*}
  \gamma_1 & :\R/\Z \to \partial U_1', &
  \gamma_2 & :\R/\Z \to \partial U_2'
\end{align*}
be such that $g_c(\gamma_1(\theta)) = f_s(\gamma_2(\theta)) = Re^{-4\pi i \theta}$ and $\arg \gamma_j(\theta)$ $(j=1,2)$ are close to $2\pi \theta$.
Let $\mathcal{A}_0 := \{z \in \C \mid \sqrt{R} \le |z| \le R\}$ and
$\mathcal{A}_j := \overline{U} \setminus U_j'$.
Define diffeomorphisms $h_j:\mathcal{A}_0 \to \mathcal{A}_j$ by
\begin{align*}
  h_j(re^{2\pi i \theta})
   & =
  \frac{R-r}{R-\sqrt{R}}\gamma_j(\theta) +
  \frac{r-\sqrt{R}}{R-\sqrt{R}} Re^{2\pi i \theta}                     \\
   & = re^{2\pi i \theta} + \frac{R-r}{R-\sqrt{R}} \gamma_j^1(\theta),
\end{align*}
where $\gamma_j^1(\theta) := \gamma_j(\theta) - \sqrt{R}e^{2\pi i \theta}$.
Let $h := h_2 \circ h_1^{-1} \colon \mathcal{A}_1 \to \mathcal{A}_2$.
Then
\begin{equation}
  \label{eqn:h}
  h(h_1(re^{2\pi i \theta}))
  =
  h_1(re^{2\pi i \theta}) +
  \frac{R-r}{R-\sqrt{R}}(\gamma_2^1(\theta) - \gamma_1^1(\theta)).
\end{equation}

Let $h(z)=z$ on $\{|z| > R\}$.
Then $h_j: \{|z| \ge \sqrt{R}\} \to (\C \setminus U_j')$ is a piecewise
smooth homeomorphism, hence quasiconformal.
Thus $h = h_2 \circ h_1^{-1}: (\C \setminus U_1') \to (\C \setminus U_2')$ is
also a quasiconformal homeomorphism.

Now define an anti-(i.e., orientation reversing) quasiregular mapping
$F_c:\C \to \C$ as follows:
\[
  F_c(z) =
  \begin{cases}
    g_c(z)            & z \in U_1',      \\
    h^{-1}(f_s(h(z))) & z \not \in U_1'.
  \end{cases}
\]
Define an almost complex structure $\sigma$ as follows:
Let $\sigma:= \sigma_0$ (the standard complex structure) on $\C \setminus U$ and on $K(g_c)$. For $z \in U \setminus K(g_c)$, let $n>0$ be the smallest integer satisfying $F_c^n(z) \not \in U$. Then define
\[
  \sigma := (F_c^{n})^*\sigma_0
\]
at $z$.
Then $\sigma$ is $F_c$-invariant. By the measurable Riemann mapping theorem,
there exists a quasiconformal homeomorphism $\eta = \eta_c:\C \to \C$ fixing $0$ and $\infty$ such that $\eta^*\sigma_0 = \sigma$ and $\eta$ is tangent to the identity at $\infty$ (note that $\eta$ is holomorphic near $\infty$).

Then $P := \eta \circ F_c \circ \eta^{-1}$ is an anti-holomorphic map and because of the normalization of $\eta$, hence $P = f_{\tilde{c}}$ for some $\tilde{c}$.
We denote $\tilde{c} = \chi_n(c)$. Note that $s \ne \chi_n(c)$ in general (see Lemma~\ref{lem:convergence 2} below).

\section{Estimate of quasiconformal homeomorphisms}
\label{sec:qc estimate}

Here we estimate how close the hybrid conjugacy $\eta = \eta_c$
defined in the previous section to the identity:

\begin{lem}
  \label{lem:qc estimate}
  The hybrid conjugacy $\eta$ satisfies
  $\eta(z)-z = O(\Lambda^{-(1-2^{-m+1})n})$ on $U$.
\end{lem}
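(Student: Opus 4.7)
The plan is to first quantify the closeness of $\partial U_1'$ and $\partial U_2'$, then bound the Beltrami coefficient of the gluing homeomorphism $h$ in $L^\infty$, and finally invoke a quantitative form of the measurable Riemann mapping theorem to pass to the desired estimate on $\eta$. Throughout, set $\epsilon_n := \Lambda^{-(1-2^{-m+1})n}$.

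By \eqref{eqn:convergence} we have $g_c(z) - f_s(z) = O(\epsilon_n)$ on $U_1'$, and since this error is real-analytic on a scale $\asymp \sqrt{R}$, its $\bar z$-derivative is $O(\epsilon_n/\sqrt R)$. Combining $g_c(\gamma_1) = f_s(\gamma_2) = Re^{-4\pi i\theta}$ gives $\overline{\gamma_1}^2 - \overline{\gamma_2}^2 = O(\epsilon_n)$, and since $|\gamma_1 + \gamma_2| \asymp 2\sqrt R$, one obtains $|\gamma_1(\theta) - \gamma_2(\theta)| = O(\epsilon_n/\sqrt R)$; differentiating in $\theta$ yields $|(\gamma_2^1 - \gamma_1^1)'(\theta)| = O(\epsilon_n/\sqrt R)$ as well. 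Substituting into the explicit formula \eqref{eqn:h} and computing partial derivatives in polar coordinates on $A_1$ --- radial extent $R - \sqrt R \asymp R$ and tangential arc-length scale $\asymp \sqrt R$ at the inner boundary --- gives $|h - \mathrm{id}| = O(\epsilon_n/\sqrt R)$ and $|\nabla(h - \mathrm{id})| = O(\epsilon_n/R)$ on $A_1$, hence $\|\mu_h\|_\infty = O(\epsilon_n/R)$.

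On $A_1$, $F_c = f_s \circ h$ lands in $\C \setminus U$ where $\sigma = \sigma_0$, so $|\sigma| = |\mu_h|$ on $A_1$ (the anti-conformal map $f_s$ preserves magnitudes of Beltrami forms); on the rest of $U \setminus K(g_c)$, $\sigma$ is defined by further pullback via the anti-holomorphic map $g_c|_{U_1'}$, which again preserves pointwise magnitudes. Thus $\|\mu_\eta\|_\infty = O(\epsilon_n/R)$ with $\mathrm{supp}(\mu_\eta) \subset U = \D(R)$. Now apply the quantitative measurable Riemann mapping theorem: if $\mu$ is supported in $\D(R)$ with $\|\mu\|_\infty \le \delta$ sufficiently small, then the unique quasiconformal homeomorphism fixing $0$ and tangent to the identity at $\infty$ satisfies $|\eta(z) - z| = O(\delta R)$ uniformly in $z$. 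This is obtained by rescaling from the $R = 1$ case and using the Neumann series $\eta - \mathrm{id} = T\mu + T(\mu(T\mu)_z) + \cdots$, where $T$ is the Cauchy transform and the Beurling transform is $L^p$-bounded for $p$ close to $2$. With $\delta = O(\epsilon_n/R)$ this yields $|\eta(z) - z| = O(\epsilon_n)$ on $U$, as required.

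The main obstacle is extracting the sharp bound $\|\mu_h\|_\infty = O(\epsilon_n/R)$ rather than the crude $O(1/R)$ one gets from each $h_j$ separately. Indeed, because $|s| = O(1)$, the defect $\gamma_j^1$ of each boundary parametrization from the circle of radius $\sqrt R$ is already of size $1/\sqrt R$, making $\|\mu_{h_j}\|_\infty$ alone of order $1/R$. The gain in $h = h_2 \circ h_1^{-1}$ comes from cancellation: only the difference $\gamma_2^1 - \gamma_1^1 = O(\epsilon_n/\sqrt R)$, controlled by the first step, enters the gradient of $h - \mathrm{id}$, producing the decisive factor $\epsilon_n$.
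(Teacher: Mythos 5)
Your proposal is correct and follows essentially the same route as the paper's proof: estimate $\gamma_2^1-\gamma_1^1$ and its $\theta$-derivative by $O(\ell/\sqrt R)$ using the perturbation $u=g_c-f_s=O(\ell)$ together with a Cauchy/Schwarz derivative bound, deduce $\|\mu_h\|_\infty=O(\ell/R)$ from \eqref{eqn:h} in polar coordinates, note $\|\mu_\eta\|_\infty=\|\mu_h\|_\infty$ by invariance under the anti-holomorphic pullbacks, and then invoke the quantitative Ahlfors--Bers/measurable-Riemann-mapping estimate for a Beltrami coefficient supported in $\D(R)$. The only cosmetic differences are that you factor $\overline{\gamma_1}^2-\overline{\gamma_2}^2$ directly rather than expanding explicit inverse branches $G_c,F_s$, and you phrase the final step as a rescaling to $R=1$ plus a Neumann series rather than citing the $\|\mu\|_p\,|z|^{1-2/p}$ bound.
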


\begin{proof}
  Let $u = u(c,z) := g_c(z)-f_s(z)$, which is anti-holomorphic in variable $z$ and $u = O(\Lambda^{-(1-2^{-m+1})n})$ by \eqref{eqn:convergence}.
  To simplify the notation, we assume $|u| \le \ell = O(\Lambda^{-(1-2^{-m+1})n})$ on $U'$.

  Let $\tau := 2\pi \theta$ with $0 \le \theta \le 1$ and
  $w(\tau) := Re^{-2\tau i}$.
  By replacing $R$ by $R/2$ if necessary, we can find an anti-holomorphic branch $z = G_c(w)$ of $g_c^{-1}$ on the disk
  $\Delta_\tau:= \{w \in \C \mid |w-w(\tau)| < R/2\}$ such that
  \[
    \bar{z} = \overline{G_c(w)} = (w-s-u)^{1/2}
    = \sqrt{w}\left(1-\frac{s+u}{w}\right)^{1/2}
    = \sqrt{w}+O\left(\frac{1}{\sqrt{w}}\right),
  \]
  where we choose the branch of $\sqrt{w}$ such that
  $\sqrt{w(\tau)} = \sqrt{R}\,e^{-\tau i}$.
  Let $V_1(w) := \overline{G_c(w)} - \sqrt{w}$ such that
  \[
    V_1(w(\tau)) =
    \overline{\gamma_1^1\left(\frac{\tau}{2\pi}\right)} = \overline{\gamma_1^1(\theta)}.
  \]
  Since $V_1(w)$ is holomorphic and $V_1(w) = O(1/R^{1/2})$ on $\Delta_\tau$, we have
  \[
    V_1'(w(\tau)) = O(1/R^{3/2}),
  \]
  hence
  \[
    \frac{d}{d \tau} V_1(w(\tau))
    =
    V_1'(w(\tau)) \cdot w'(\tau)
    = O(1/R^{1/2}).
  \]
  Similarly, we define a branch $z = F_s(w)$ of $f_s^{-1}$ on $\Delta_r$ such that
  \[
    \bar{z} = \overline{F_s(w)}
    = \sqrt{w}\left(1-\frac{s}{w}\right)^{1/2}
  \]
  Let $V_2(w) := \overline{F_s(w)} - \sqrt{w}$ such that
  \[
    V_2(w(\tau)) = \overline{\gamma_2^1\left(\frac{\tau}{2\pi}\right)}
    = \overline{\gamma_2^1(\theta)}
  \]
  with derivative
  $\dfrac{d}{d \tau} V_2(w(\tau)) = O(1/R^{1/2})$.
  Hence we have
  \[
    \Gamma(\tau) :=
    \overline{\gamma_2^1(\theta) - \gamma_1^1(\theta)}
    = V_2(w(\tau)) - V_1(w(\tau)).
  \]
  Since
  \begin{align*}
    V_2(w) - V_1(w)
     & = \overline{F_s(w) -G_c(w)}                       \\
     & = \sqrt{w}\left\{\left(1-\frac{s}{w}\right)^{1/2}
    -\left(1-\frac{s+u}{w}\right)^{1/2}\right\}          \\
     & =\sqrt{w} \,O(u/w),
  \end{align*}
  we have
  \[
    \Gamma(\tau) =\sqrt{R} \,O(\ell/R)=O(\ell/R^{1/2}).
  \]
  By the Schwarz lemma,
  \[
    \frac{d}{dw}(V_2(w)-V_1(w)) = O(\ell/R^{3/2})
  \]
  at $w=w(\tau)$, hence it follows that
  \begin{align*}
    \Gamma'(\tau)
     & = \left. \frac{d}{dw}(V_2(w)-V_1(w))\right|_{w=w(\tau)}
    \cdot w'(\tau)                                             \\
     & = O(\ell/R^{3/2}) \cdot O(R) = O(\ell/R^{1/2}).
  \end{align*}
  Let $H(z) := h(z)-z$.
  By \eqref{eqn:h}, it follows that for $z=h_1(re^{\tau i})$,
  \[
    H(z) = \frac{R-r}{R-\sqrt{R}}\Gamma(\tau).
  \]
  Then
  \begin{align*}
    H_r    & = -\frac{\Gamma(\tau)}{R-\sqrt{R}} = O(\ell/R^{3/2}),                                  \\
    H_\tau & = \frac{R-r}{R-\sqrt{R}}\cdot\Gamma'(\tau)
    = O(\ell/R^{1/2}),                                                                              \\
    z_r    & = e^{\tau i} - \frac{\gamma_1^1(\theta)}{R-\sqrt{R}} = O(1),                           \\
    z_\tau & = ire^{\tau i} + \frac{R-r}{R-\sqrt{R}}\frac{d}{d\tau} \overline{V_1(w(\tau))} = O(R),
  \end{align*}
  and similarly $\bar{z}_r = e^{-\tau i} + O(1/R^{3/2})=O(1)$
  and $\bar{z}_r = -ire^{-\tau i}+O(1/R^{1/2})=O(R)$.
  In particular,
  $z_r \bar{z}_\tau - z_\tau \bar{z}_r=-2ir+O(1/R^{1/2})$
  where $r$ ranges from $\sqrt{R}$ to $R$.
  Therefore
  \begin{align*}
    \begin{pmatrix}
      H_z & H_{\bar{z}}
    \end{pmatrix}
     & =
    \begin{pmatrix}
      H_r & H_{\tau}
    \end{pmatrix}
    \begin{pmatrix}
      z_r & z_\tau \\ \bar{z}_r & \bar{z}_\tau
    \end{pmatrix}^{-1} \\
     & = \begin{pmatrix}
           O(\ell/R^{3/2}) & O(\ell/R^{1/2})
         \end{pmatrix}
    \cdot O(1/R^{1/2})
    \begin{pmatrix}
      O(R) & O(R) \\
      O(1) & O(1)
    \end{pmatrix}                          \\
     & = \begin{pmatrix}
           O(\ell/R) & O(\ell/R)
         \end{pmatrix}
  \end{align*}
  on $\mathcal{A}_1$.
  Thus the complex dilatation $\mu_h$ of $h$ satisfies
  \[
    \mu_h = \frac{H_{\bar{z}}}{1+H_z} = O(\ell/R)
  \]
  as well. This implies that
  \begin{equation}
    \label{eqn:dilatation of eta}
    \|\mu_\eta\|_{\infty} = \|\mu_h\|_{\infty} = O(\ell/R)
    (= O(\Lambda^{-(1-2^{-m})n}))
  \end{equation}
  where $\|\cdot\|_p$ is the norm for $L^p(\C)$.
  Since $\mu_\eta$ is supported on $U \setminus K(g_c)$, it is of compact
  support. Hence by \cite[Theorem~4.24]{MR1215481} and the proof of its Corollary~2, the following holds: For given $m>0$ and $p>2$,
  if $n$ is sufficiently large, then we have
  \[
    |\eta(z)-z| \le K_p \|\mu_\eta\|_p |z|^{1-2/p}
  \]
  for any $z \in \C$, where $K_p>0$ is a constant depending only on $p$.
  Since $\mu_\eta$ is supported on $U$, we have
  \begin{align*}
    \|\mu_\eta\|_p
     & \le \|\mu_\eta\|_\infty \cdot (\area U)^{1/p} \\
     & = \|\mu_\eta\|_\infty(\pi R^2)^{1/p}          \\
     & = O(\ell\, R^{-1+2/p}).
  \end{align*}
  Hence if $z \in U$, we obtain
  \begin{align*}
    |\eta(z)-z|
     & \le O(\ell R^{-1+2/p}) \cdot R^{1-2/p} \\
     & = O(\ell)=O(\Lambda^{-(1-2^{-m+1})n}).
  \end{align*}
\end{proof}

\begin{lem}
  \label{lem:convergence 2}
  Let $\tilde{c} \in \cM^*$ and
  assume a sequence $\{b_n = c_n + \rho_n(t_n)\}$
  satisfies $\chi_n(b_n) = \tilde{c}$ for sufficiently large $n$.
  Then  $s_n = (|b_0|^2 - |b_0^*|^2)t_n \to \tilde{c}$ and
  $g_{n,b_n}$ converges locally uniformly to $f_{\tilde{c}}$.
\end{lem}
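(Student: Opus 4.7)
The plan is to pin down the value of $s_n$ by comparing the critical values of $g_{b_n}$ and of its straightening $f_{\tilde c}$ through the hybrid conjugacy $\eta=\eta_{b_n}$, and then deduce the convergence of the maps from \eqref{eqn:convergence}. Since we are in the standing setting of Sections \ref{sec:poly-like}--\ref{sec:qc estimate}, we have $t_n=O(1)$, so the scaling limit and the estimate of the hybrid conjugacy are available.

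First I would exploit the fact that $\eta$ was normalized to fix $0$ and conjugates $F_{b_n}$ to $f_{\chi_n(b_n)}=f_{\tilde c}$. Because $0\in U_1'$, we have $F_{b_n}(0)=g_{b_n}(0)$, so applying $\eta$ to both sides of $F_{b_n}(0)=g_{b_n}(0)$ gives
\[
\eta\bigl(g_{b_n}(0)\bigr)=f_{\tilde c}(\eta(0))=f_{\tilde c}(0)=\tilde c.
\]
Next, equation \eqref{eqn:convergence} at $z=0$ shows $g_{b_n}(0)=s_n+O(\Lambda^{-(1-2^{-m+1})n})$, which is in particular bounded, hence lies in $U=\D(R)$ for $n$ large (as $R\asymp \Lambda^{2^{-m}n}\to\infty$). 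We may therefore apply Lemma \ref{lem:qc estimate} at the point $z=g_{b_n}(0)$ to obtain $\eta(g_{b_n}(0))=g_{b_n}(0)+O(\Lambda^{-(1-2^{-m+1})n})$, and comparing this with the previous display yields
\[
g_{b_n}(0)=\tilde c+O(\Lambda^{-(1-2^{-m+1})n}).
\]

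Combining the two expressions for $g_{b_n}(0)$ gives $s_n=\tilde c+O(\Lambda^{-(1-2^{-m+1})n})$, so $s_n\to\tilde c$ (exponentially fast). Substituting back into \eqref{eqn:convergence} immediately yields
\[
g_{n,b_n}(z)=\bar z^2+s_n+O(\Lambda^{-(1-2^{-m+1})n})\longrightarrow \bar z^2+\tilde c=f_{\tilde c}(z)
\]
locally uniformly on $\C$, which is the desired conclusion. No step is really an obstacle; the only thing to be careful about is to observe that the critical point $0$ is preserved by $\eta$ (by normalization) so that the critical value $g_{b_n}(0)$ is forced to map to the critical value $\tilde c$ of $f_{\tilde c}$, after which the whole statement reduces to a single application of Lemma \ref{lem:qc estimate} to the point $g_{b_n}(0)$.
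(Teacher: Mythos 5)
Your proposal is correct and follows essentially the same route as the paper: compare the critical value $g_{b_n}(0)=s_n+O(\Lambda^{-(1-2^{-m+1})n})$ (from \eqref{eqn:convergence}) with its image $\tilde c$ under the hybrid conjugacy $\eta_{b_n}$, and use Lemma~\ref{lem:qc estimate} to conclude that $s_n\to\tilde c$, then feed this back into \eqref{eqn:convergence}. The only difference is that you spell out two points the paper leaves implicit (why $\eta$ sends the critical value of $g_{b_n}$ to $\tilde c$ via the normalization $\eta(0)=0$, and why the critical value lies in $U$ so that Lemma~\ref{lem:qc estimate} applies), which is a harmless and sensible expansion.
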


\begin{proof}

  Since the critical value of $g_{b_n}$ is $s_n + O(\Lambda^{-(1-2^{-m+1})n})$
  and it is mapped to $\tilde{c}$ by $\eta_{b_n}$,
  it follows by Lemma~\ref{lem:qc estimate} that
  \[
    \tilde{c} - \{s_n + O(\Lambda^{-(1-2^{-m+1})n})\}
    = O(\Lambda^{-(1-2^{-m+1})n}).
  \]
  Hence $s_n \to \tilde{c}$ as $n \to  \infty$ and local uniform convergence
  follows by \eqref{eqn:convergence}.
\end{proof}

\section{Estimate of B\"ottcher coordinates}
\label{sec:Boettcher}

We want to approximate the decorations attached to the filled Julia set of a given polynomial-like restriction by the truncated binary external rays of the straightening.
Hence we need to estimate B\"ottcher coordinates.

Recall that $\Boettcher_c$ is the B\"ottcher coordinate for $f_c$.
For $c = c_n+\rho_n(t)$ with $t=O(1)$,
we define a ``quasiconformal B\"ottcher coordinate''
$\Boettcher_{g_c}:(\overline{U} \setminus K(g_c)) \to \C$ for $g_c$ as follows:
First for $z \in \mathcal{A}_1 = \overline{U} \setminus U'$, let
\[
  \Boettcher_{g_c}(z) = \Boettcher_{\chi_n(c)} \circ \eta_c(z).
\]
Then extend it to $\overline{U'} \setminus K(g_c)$ continuously by the functional equation
\begin{equation}\label{eq:Boettcher_g_c}
  \Boettcher_{g_c}(g_c(z)) = \overline{\Boettcher_{g_c}(z)}^2.
\end{equation}
By construction, the map $\Boettcher_{g_c}:(U' \setminus K(g_c)) \to \C$ is a quasiconformal homeomorphism onto its image.

\begin{lem}
  \label{lem:Boettcher}
  Fix $m>1$. On $\tilde{D}_c^m \setminus \tilde{D}_c^{m+1}$, $\Boettcher_{g_c}$ satisfies
  \[
    |\Boettcher_{g_c}(z)-z| = O(\Lambda^{-2^{-m-1}\,n}).
  \]
\end{lem}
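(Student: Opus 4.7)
My plan is to reduce the estimate to two classical ingredients: the size of $\eta_c-\mathrm{id}$ from Lemma~\ref{lem:qc estimate}, and the asymptotic expansion of the polynomial B\"ottcher coordinate at infinity. The key observation is that the formula $\Boettcher_{g_c}=\Boettcher_{\chi_n(c)}\circ\eta_c$, taken as the definition only on $A_1$, actually holds on all of $U\setminus K(g_c)$.

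To establish this global identification, let $\tilde\phi:=\Boettcher_{\chi_n(c)}\circ\eta_c$. Since $\eta_c$ conjugates $g_c$ to $f_{\chi_n(c)}$ on $U_1'$ and $\Boettcher_{\chi_n(c)}$ conjugates $f_{\chi_n(c)}$ to $w\mapsto\bar w^2$, $\tilde\phi$ satisfies the same functional equation $\tilde\phi\circ g_c=\overline{\tilde\phi}^{\,2}$ on $U_1'$ as $\Boettcher_{g_c}$. Both are continuous on the connected set $U\setminus K(g_c)$ and agree on $A_1$; uniqueness of continuous square-root extension across $\partial U_1'$ therefore forces $\Boettcher_{g_c}=\tilde\phi$ throughout.

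With this identity in hand, for $z\in\tilde D_c^m\setminus\tilde D_c^{m+1}$ I would decompose
\[
\Boettcher_{g_c}(z)-z = \bigl(\Boettcher_{\chi_n(c)}(\eta_c(z))-\eta_c(z)\bigr)+\bigl(\eta_c(z)-z\bigr).
\]
Lemma~\ref{lem:qc estimate} bounds the second summand by $O(\Lambda^{-(1-2^{-m+1})n})$ uniformly on $U$. For the first, matching coefficients in the anti-holomorphic Abel equation $\Boettcher_{\tilde c}\circ f_{\tilde c}=\overline{\Boettcher_{\tilde c}}^{\,2}$ yields the standard expansion $\Boettcher_{\tilde c}(w)=w+\overline{\tilde c}/(2w)+O(|w|^{-3})$ with constants uniform for $\tilde c$ in a bounded set; thus $|\Boettcher_{\chi_n(c)}(\eta_c(z))-\eta_c(z)|=O(1/|\eta_c(z)|)=O(1/|z|)$. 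A uniform lower bound $|z|\gtrsim R^{1/2}$ on $\tilde D_c^m\setminus\tilde D_c^{m+1}$ (where $R\asymp\Lambda^{2^{-m}n}$) then makes this $O(R^{-1/2})=O(\Lambda^{-2^{-m-1}n})$. Since $1-2^{-m+1}>2^{-m-1}$ for $m\ge 1$, the first summand dominates, completing the bound.

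The main technical point is the lower bound $|z|\gtrsim R^{1/2}$ on $\tilde D_c^m\setminus\tilde D_c^{m+1}$; equivalently, that $\tilde D_c^{m+1}$ contains a round disk of radius $\asymp R^{1/2}$ centered at $0$. This follows by applying Lemma~\ref{lem:round disks} one level deeper: $D_c^{m+1}\supset\D(r_{m+1})$ with $r_{m+1}\asymp\Lambda^{-(1-2^{-m-1})n}$, so after rescaling $\tilde D_c^{m+1}\supset\D(|\alpha_n|r_{m+1})$, and $|\alpha_n|r_{m+1}\asymp\Lambda^n\cdot\Lambda^{-(1-2^{-m-1})n}=\Lambda^{2^{-m-1}n}=R^{1/2}$. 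Hence any $z\notin\tilde D_c^{m+1}$ satisfies $|z|\ge cR^{1/2}$ for some $c>0$, and the bound $O(1/|z|)$ translates uniformly into $O(\Lambda^{-2^{-m-1}n})$ across the full annulus.
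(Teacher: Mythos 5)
Your proposal is correct and follows essentially the same route as the paper: decompose $\Boettcher_{g_c}(z)-z$ into $(\Boettcher_{\chi_n(c)}\circ\eta_c - \eta_c) + (\eta_c - \mathrm{id})$, bound the first term by $O(1/|z|)$ via the Laurent expansion of the polynomial B\"ottcher coordinate together with the lower bound $|z|\gtrsim|\alpha_n|r_{m+1}\asymp\Lambda^{2^{-m-1}n}$ from Lemma~\ref{lem:round disks}, and bound the second term by Lemma~\ref{lem:qc estimate}. The only addition is your explicit justification that the identity $\Boettcher_{g_c}=\Boettcher_{\chi_n(c)}\circ\eta_c$ propagates from $A_1$ to all of $U\setminus K(g_c)$ by uniqueness of the continuous extension under the functional equation --- a step the paper uses tacitly; note also that the inequality $1-2^{-m+1}>2^{-m-1}$ you invoke requires $m\ge 2$ (i.e.\ $m>1$ as in the hypothesis), not $m\ge 1$ as written.
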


\begin{proof}
  Note that any $z \in \tilde{D}_c^m \setminus \tilde{D}_c^{m+1}$
  satisfies $|z| \ge |\alpha_n|r_{m+1} \asymp \Lambda^{2^{-m-1}n}$
  by Lemma \ref{lem:round disks},
  and that the functional equation
  (\ref{eq:Boettcher_g_c}) implies
  $\Boettcher_{c}(z)=z+O(1/z)$ near $\infty$
  for any $c \in \C$.
  Hence by Lemma \ref{lem:qc estimate}, we have
  \begin{align*}
    |\Boettcher_{g_c}(z) -z|
     & \le |\Boettcher_{\chi_n(c)}(\eta_c(z))-\eta_c(z)|
    +|\eta_c(z)-z|                                       \\
     & =O(1/\eta_c(z))+|\eta_c(z)-z|                     \\
     & =O(1/z)+|\eta_c(z)-z|                             \\
     & =O(\Lambda^{-2^{-m-1} \,n})
    + O(\Lambda^{-(1-2^{-m+1})\,n})
    =O(\Lambda^{-2^{-m-1} \,n})
  \end{align*}
  since $m >1$.
\end{proof}

\section{Approximating filled Julia sets}
\label{sec:approx J}

Note that our parameter $c$ is close to $-2$, hence the filled Julia set of $f_c$ is close to the interval $K(f_{-2})=[-2,2]$.
In this section, we show that for $c$ close to $c_n$, $K(f_c)$ is exponentially close to $[-2,2]$ as $n \to \infty$.
In particular, in $D_c^{0} \setminus D_c^1$, the filled Julia set should almost look like two intervals on the real line (Corollary~\ref{cor:close to intervals}. See Figure~\ref{fig:per5}).
The decorations attached to the filled Julia set of the renormalization can be obtained by pulling back these ``intervals'' (Lemma~\ref{lem:argument}).

The following theorem is proved by Rivera-Letelier \cite[Theorem~C]{MR1880905} for holomorphic quadratic polynomials:
\begin{thm}[Rivera-Letelier]
  \label{thm:RL}
  Assume $Q_{c_0}$ is semihyperbolic for $c_0 \in \partial \cM$.
  Then there exists $C>0$ such that if $c \in \C$ is close to $c_0$,
  \[
    d_H(J(Q_{c_0}), K(Q_c)) \le C|c-c_0|^{1/2},
  \]
  where $d_H$ is the Hausdorff distance.
\end{thm}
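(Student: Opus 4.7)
The plan is to separate the Hausdorff bound into two one-sided inclusions and attack each with a different tool.

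For the inclusion $K(Q_c) \subset N_{C|c-c_0|^{1/2}}(J(Q_{c_0}))$, the cleanest route is via the Green functions $G_c(z) := \lim_{n\to\infty} 2^{-n}\log^+|Q_c^n(z)|$. Because $Q_c$ depends holomorphically on $c$, a standard harmonic-function argument gives the uniform Lipschitz estimate $|G_c(z) - G_{c_0}(z)| = O(|c-c_0|)$ on any bounded set, so
\[
K(Q_c) = \{G_c = 0\} \subset \{G_{c_0} \le K|c-c_0|\}
\]
for some $K>0$. The problem therefore reduces to the static statement that for semihyperbolic $Q_{c_0}$ the sub-level set $\{G_{c_0}\le\delta\}$ is contained in the $C\sqrt{\delta}$-neighborhood of $J(Q_{c_0})$. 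This in turn is a boundary regularity statement for the inverse B\"ottcher map $\Phi_{c_0}^{-1}\colon\C\setminus\overline{\D}\to\C\setminus K(Q_{c_0})$: the basin of infinity of a semihyperbolic quadratic polynomial is a John domain, and a telescope / shrinking-puzzle-piece argument using non-recurrence of the critical point shows that $\Phi_{c_0}^{-1}$ extends $\tfrac{1}{2}$-H\"older-continuously to the unit circle, with the exponent $\tfrac{1}{2}$ dictated by the local model $z\mapsto z^2$ at the critical point.

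For the reverse inclusion $J(Q_{c_0}) \subset N_{C|c-c_0|^{1/2}}(K(Q_c))$, I would use density of repelling cycles. Semihyperbolicity provides uniform expansion with bounded pullback degree on a neighborhood of $J(Q_{c_0})$, and from this one deduces that repelling periodic points of $Q_{c_0}$ are $\delta$-dense in $J(Q_{c_0})$ at period $n(\delta) = O(\log(1/\delta))$. Each such periodic point $z_0$ of period $n$ continues holomorphically by the implicit function theorem to a repelling periodic point $z(c)\in J(Q_c)\subset K(Q_c)$ of $Q_c$, with
\[
|z(c)-z_0| \le \frac{|c-c_0|\,|\partial_c Q_{c_0}^n(z_0)|}{|(Q_{c_0}^n)'(z_0)-1|} = O(n|c-c_0|),
\]
because exponential expansion of $(Q_{c_0}^n)'(z_0)$ in the denominator cancels the factor $n\Lambda^{n-1}$ coming from the chain rule in the numerator. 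Applied with $\delta = |c-c_0|^{1/2}$ and the triangle inequality, this produces a point of $K(Q_c)$ within $|c-c_0|^{1/2} + O(|c-c_0|\log(1/|c-c_0|))$ of any prescribed point of $J(Q_{c_0})$.

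The main obstacle is the first direction, specifically the sharp $\tfrac{1}{2}$-H\"older boundary behaviour of $\Phi_{c_0}^{-1}$ for an arbitrary semihyperbolic quadratic $Q_{c_0}\in\partial\cM$. The Lipschitz dependence of $G_c$ on $c$ is routine, and the second direction is comparatively standard once the quantitative density of repelling cycles is available. The real work is the telescope construction: one must produce, at every $z\in J(Q_{c_0})$, a nested system of puzzle pieces whose pullbacks under $Q_{c_0}$ meet the critical point only with controlled frequency (possible precisely because the critical point is non-recurrent), so that distortion of deep inverse branches is bounded and the $z\mapsto z^2$ bottleneck at the critical point is confirmed to be the unique source of the exponent $\tfrac{1}{2}$.
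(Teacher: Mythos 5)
The paper does not actually give a proof of this theorem: it is a quotation of Rivera-Letelier's Theorem~C, and the paper's ``outline of proof'' is only a remark on why Rivera-Letelier's argument (Proposition~4.2 of \cite{MR1880905}) carries over to the biquadratic family $(z^2+a)^2+b$, using the $\lambda$-lemma to handle the two-dimensional parameter space. Rivera-Letelier's mechanism is a holomorphic motion of a compact set surrounding $J(Q_{c_0})$ together with a direct pullback argument near the critical point; the exponent $1/2$ appears as $1/d_{\max}$ where $d_{\max}=2$ is the local degree at the critical point. Your proposal is therefore a genuinely different route: you decompose the Hausdorff estimate into two one-sided inclusions and attack them with the Green function and with the density of repelling cycles respectively, rather than with a holomorphic motion.

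Your second direction ($J(Q_{c_0})\subset N_{C|c-c_0|^{1/2}}(K(Q_c))$) is essentially sound. Repelling cycles are $\delta$-dense at period $O(\log(1/\delta))$ because semihyperbolicity gives exponential shrinking of pullbacks along $J$, each cycle continues by the implicit function theorem since there are no parabolic cycles, and the perturbation bound you write (even with the crude factor $n$, which can in fact be replaced by a geometric sum and hence a constant) combined with $\delta=|c-c_0|^{1/2}$ gives the required inclusion. One detail worth spelling out is that the radius of the disk in $c$ on which each period-$n$ cycle continues does not shrink faster than a fixed power of $|c-c_0|$; this again uses the uniform expansion furnished by semihyperbolicity.

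The first direction contains the real gaps, and I would push back on your characterization of one step as ``standard.'' The claim $|G_c(z)-G_{c_0}(z)|=O(|c-c_0|)$ uniformly on bounded sets is not a routine harmonic-function argument: the difference $G_c-G_{c_0}$ is a $\delta$-subharmonic function that changes sign across $K(Q_c)\cup K(Q_{c_0})$, so the maximum principle does not close the estimate directly, and the naive orbit-comparison argument (iterate until escape, compare $Q_c^n$ with $Q_{c_0}^n$) produces only a H\"older modulus whose exponent depends on the expansion rate along $J(Q_{c_0})$. In fact the overall exponent $1/2$ in the theorem is, in your scheme, a product of two exponents: the modulus of continuity of $G_c$ in $c$, and the modulus relating $\{G_{c_0}\le\delta\}$ to a metric neighborhood of $J(Q_{c_0})$. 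There is no reason these two exponents should separately be $1$ and $1/2$; they must be controlled together, and the ``telescope / shrinking puzzle-piece'' argument you postpone to the end is exactly where both would have to be established. You correctly identify this as the main obstacle, but the Lipschitz claim as stated is part of that obstacle, not a separate routine ingredient. Rivera-Letelier's holomorphic-motion proof bypasses this pair of potential-theoretic estimates entirely, and the paper's remark about the $\lambda$-lemma is precisely what lets that proof run over the two-parameter biquadratic family instead of the one-parameter quadratic one.
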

Indeed, we can apply this theorem for $f_c(z) = \bar{z}^2+c$ with $c_0 = \hat{c} = -2$.
Namely. there exists $C>0$ such that
\[
  K(f_c) \subset \{|\im z| < C|c-\hat{c}|^{1/2}\}.
\]
\begin{proof}[Outline of proof of Theorem~\ref{thm:RL}]
  Since $f_{\hat{c}}^2 = Q_{\hat{c}}^2$,
  the dynamical properties in \cite[\S2, 3]{MR1880905} hold for $f_{\hat{c}}$.

  Then we can apply the proof of \cite[Proposition~4.2]{MR1880905} to
  the biquadratic family $(z^2+a)^2+b$.
  Note that we need the $\lambda$-lemma \cite{MR732343} in order to extend a holomorphic motion only to the closure of the domain of definition,
  hence we can apply the proof to a two-dimensional parameter space.
\end{proof}

Let $c=c_n+\rho_n(t)$ with $t=O(1)$ as before.
Recall that $\KoenigsInv_c = \Koenigs_c^{-1}$ can be extended to an entire function, normalized so that $\Koenigs_c(0)=1$.
Let $\tilde{K}_c := \KoenigsInv_c^{-1}(K(f_c))$.
Recall that for $c=\hat{c}=-2$.
In this section, we write $-2$ rather than $\hat{c}$ because the argument here is specific for the case $\hat{c}=-2$ and does not work for a Misiurewicz parameter $\hat{c}$ in general.
Then we have
\[
  \KoenigsInv_{-2}(w) = 2\cos\left(\frac{\pi}{2}\sqrt{w}\right).
\]
Hence it follows that $\tilde{K}_{-2} = [0,\infty]$, $\KoenigsInv_{-2}([0,4]) = [-2,2] = K_{-2}$ and,
since $D_{-2}^0 \cap K_{-2} = [-1,1]$ , it follows that
\[
  \Koenigs_{-2}(D_{-2}^0 \cap K(f_{-2}))
  = \left[\frac{4}{9}, \frac{16}{9}\right].
\]
By Theorem~\ref{thm:RL}, $\Koenigs_c(D_c^0 \cap K(f_c))$
is a compact set contained in a $O(\Lambda^{-\frac{1}{2}n})$-neighborhood
of $\tilde{K}_{-2}^0 = [\frac{4}{9},\frac{16}{9}]$.
Thus we can take compact sets $\tilde{L}_c \subset \tilde{K}_c^0 \subset \tilde{K}_c$ contained in a $O(\Lambda^{-\frac{1}{2}n})$-neighborhood of $[0,4]$ such that
\begin{align*}
  \KoenigsInv_c(\tilde{K}_c^0) & = K(f_c), &
  \tilde{L}_c = \tilde{K}_c^0 \setminus \Koenigs_c(\Int D_c^0).
\end{align*}
(See Figures~\ref{fig:per5} and \ref{fig:per5-2}.)
Then we have the following:

\begin{figure}
  \centering
  \includegraphics[width=10cm]{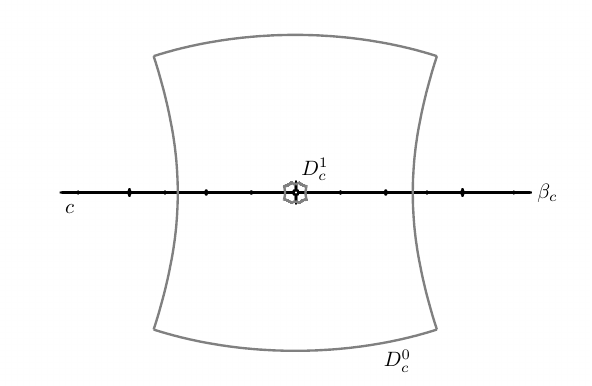}
  \caption{The filled Julia set of $f_c$, $D_c^0$ and $D_c^1$.}
  \label{fig:per5}
\end{figure}

\begin{figure}
  \centering
  \raisebox{-2cm}{\includegraphics[width=5cm]{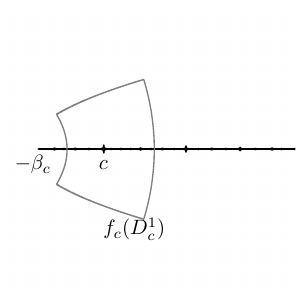}}
  $\xrightarrow{\lambda_c^{n+1}\Koenigs_c(-z)}$
  \raisebox{-2cm}{\includegraphics[width=5cm]{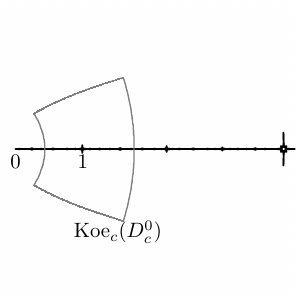}}
  \caption{Near the critical value and the linearizing coordinate.}
  \label{fig:per5-2}
\end{figure}

\begin{lem}
  For $z \in \tilde{L}_c$,
  \[
    \arg(1-z) = l\pi + O(\Lambda^{-\frac{1}{2}n})
  \]
  for some $l \in \Z$.
\end{lem}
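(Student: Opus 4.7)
The plan is to combine two facts about $\tilde L_c$: (i) it lies in a very thin horizontal slab around the real interval $[0,4]$, and (ii) it keeps a uniform (in $n$) positive distance from the point $1$. Fact (i) is already recorded in the paragraph preceding the lemma: $\tilde L_c \subset \tilde K_c^0$ is contained in an $O(\Lambda^{-n/2})$-neighborhood of $[0,4]$, so every $z \in \tilde L_c$ satisfies $|\im z| = O(\Lambda^{-n/2})$. Once (ii) is established, the conclusion follows by elementary trigonometry.

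For (ii), I use the definition $\tilde L_c = \tilde K_c^0 \setminus \Koenigs_c(\Int D_c^0)$: it suffices to exhibit a disk $\D(1,\delta_0) \subset \Koenigs_c(\Int D_c^0)$ with $\delta_0 > 0$ independent of $n$. By Lemma~\ref{lem:round disks}, $D_c^0 \supset \D(r_0)$ for some uniform $r_0 > 0$. Near $z = 0$, $\Koenigs_c(z) = 1 + a_c z + O(z^2)$ with $a_c = a_{\hat c} + O(\Lambda^{-n})$ and $a_{\hat c} = -2/\pi \neq 0$ (computed from the explicit formula $\KoenigsInv_{\hat c}(w) = 2\cos(\tfrac{\pi}{2}\sqrt{w})$ in~\eqref{eqn:Poincare for Chebyshev}), with all error estimates uniform for $c = c_n + \rho_n(t)$, $t = O(1)$. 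Hence $\Koenigs_c$ is univalent on some uniform subdisk $\D(r_0') \subset \D(r_0)$ and its image contains $\D(1, \delta_0)$ for a uniform $\delta_0 > 0$; consequently $|z - 1| \ge \delta_0$ for every $z \in \tilde L_c$.

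Combining (i) and (ii), write $z = x + iy \in \tilde L_c$, so $|y| \le C\Lambda^{-n/2}$ and $|z - 1|^2 \ge \delta_0^2$. Then $(1-x)^2 \ge \delta_0^2 - C^2\Lambda^{-n} \ge \delta_0^2/2$ for $n$ sufficiently large, and
\[
\arg(1 - z) = \arg\bigl((1-x) - iy\bigr) = l\pi + \arctan\!\left(\frac{-y}{1-x}\right) = l\pi + O(\Lambda^{-n/2}),
\]
with $l = 0$ when $x < 1$ and $l = 1$ when $x > 1$. The only point that needs genuine care is the $n$-uniformity in (ii): this rests on continuous dependence of $\Koenigs_c$ on $c$ near $\hat c = -2$, together with the uniform lower bound on $|a_c|$ and on the size of $D_c^0$ supplied by Lemma~\ref{lem:round disks}.
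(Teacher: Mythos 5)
Your argument is correct and follows essentially the same route as the paper's one-line proof: the paper simply records that $\tilde{L}_c$ lies in an $O(\Lambda^{-n/2})$-neighborhood of $[0,4]\setminus(\tfrac{4}{9},\tfrac{16}{9})$, which already packages your facts (i) and (ii) together. The only genuine added value in your write-up is that you explicitly justify (ii) — that $\tilde{L}_c$ stays a uniform distance from $1$ — by exhibiting a fixed disk $\D(1,\delta_0)\subset\Koenigs_c(\Int D_c^0)$ via the uniform lower bound on $|a_c|$ and Lemma~\ref{lem:round disks}; the paper leaves this to the reader as a consequence of the preceding discussion that $\Koenigs_c(D_c^0\cap K(f_c))$ is close to $[\tfrac{4}{9},\tfrac{16}{9}]$.
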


\begin{proof}
  This simply follows from the fact that $\tilde{L}_c$ is contained in a $O(\Lambda^{-\frac{1}{2}n})$-neighborhood of $[0,4] \setminus (\frac{4}{9},\frac{16}{9})$.
\end{proof}

Now we estimate the argument of points outside $D_c^1$.
Let $L_c := K(f_c) \setminus f_c(D_c^1)$.
\begin{lem}
  If $z \in L_c$, then
  \[
    \arg(c-z) = l\pi + O(\Lambda^{-\frac{1}{2}n})
  \]
  for some $l \in \Z$.
\end{lem}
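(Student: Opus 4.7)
Plan: My approach is to split $L_c$ according to whether $z$ lies in $D_c^0$ and treat each part with the tools already developed. I would first handle $z \in L_c \cap D_c^0$ directly from Rivera--Letelier, then reduce the outer case $z \in L_c \setminus D_c^0$ to the preceding lemma via the Poincaré coordinate, and finally push through the subtle sub-case where $z$ approaches $c$ using the critical behavior of $\KoenigsInv_{\hat c}$.

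For $z \in L_c \cap D_c^0$ I would observe that $f_c(D_c^1)$, being an $O(\Lambda^{-n})$-neighborhood of $-\beta_c$, is disjoint from $D_c^0$ for large $n$, so this part equals $K(f_c) \cap D_c^0$. By Theorem~\ref{thm:RL}, every such $z$ lies within $O(\Lambda^{-n/2})$ of $K(f_{\hat c}) \cap D_{\hat c}^0 = [-1, 1]$. Combined with $c = \hat c + O(\Lambda^{-n}) = -2 + O(\Lambda^{-n})$, this gives $|\im(c - z)| = O(\Lambda^{-n/2})$ and $|\re(c - z)| \ge 1/2$, whence $\arg(c - z) = \pi + O(\Lambda^{-n/2})$.

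For $z \in L_c \setminus D_c^0$ I would write $z = \KoenigsInv_c(u)$ for some $u \in \tilde L_c$. By the preceding lemma $u$ lies in an $O(\Lambda^{-n/2})$-neighborhood of $[0, 4/9] \cup [16/9, 4]$, so writing $u = u_0 + iv$ with $u_0$ real and $|v| = O(\Lambda^{-n/2})$, the image $z$ is close to $\KoenigsInv_{\hat c}(u_0) \in [1, 2] \cup [-2, -1]$. When $u_0 \in [0, 4/9]$, $c - z$ lies close to $[-4, -3]$, uniformly bounded away from $0$, and the same imaginary-part argument gives $\arg(c - z) = \pi + O(\Lambda^{-n/2})$. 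When $u_0 \in [16/9, 4]$, $c - z$ may approach $0$, and I would use \eqref{eqn:Poincare for Chebyshev} to expand around the critical point:
\[
 \KoenigsInv_c(u) = \hat c + \frac{\pi^2}{64}(u - 4)^2 + O((u - 4)^3) + O(\Lambda^{-n}),
\]
so that with $w := u - 4$,
\[
 c - z = -\frac{\pi^2}{64}w^2 + O(w^3) + O(\Lambda^{-n}).
\]
The hypothesis $z \notin f_c(D_c^1)$, together with $f_c(D_c^1) \supset \D_c(r_1^2)$ and $r_1^2 \asymp \Lambda^{-n}$, forces $|c - z| \ge \Omega(\Lambda^{-n})$ and hence $|w| \ge \Omega(\Lambda^{-n/2})$. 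Combining this with $|\im u| = O(\Lambda^{-n/2})$ from the preceding lemma and the constraint $\re(w^2) \ge 0$ coming from $z \in K(f_c)$ lying close to the real interval $[-2, 2]$, one controls $\arg w$ up to a multiple of $\pi/2$ within $O(\Lambda^{-n/2})$, and squaring recovers $\arg(c - z) = l\pi + O(\Lambda^{-n/2})$.

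The hard part will be this final sub-case: unlike the preceding lemma, $c - z$ is no longer uniformly bounded away from $0$, so the absolute Rivera--Letelier bound $|\im(c - z)| = O(\Lambda^{-n/2})$ does not by itself control the argument. One must instead exploit the quadratic vanishing of $\KoenigsInv_c$ at $u = 4$ together with the precise comparability $\diam f_c(D_c^1) \asymp \Lambda^{-n} \asymp |w|^2$, balancing these to recover the sharp relative bound on the phase.
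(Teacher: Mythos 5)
Your decomposition and the easy cases (in particular $L_c \cap D_c^0$ and the branch with $u_0 \in [0,4/9]$) are fine, and you correctly isolate the hard sub-case, but your resolution of that sub-case has a genuine gap. There you have $w = u - 4$ with $|\im w| = O(\Lambda^{-n/2})$ (extracted from the preceding lemma near $u = 4$) and, granting your lower bound, $|w| \gtrsim \Lambda^{-n/2}$. These two facts together only give $|\sin \arg w| = O(1)$: the upper and lower scales coincide, so there is no nontrivial phase constraint. The further constraint $\re(w^2) \ge -O(\Lambda^{-n/2})$, coming from $\re z \ge -2 - O(\Lambda^{-n/2})$, is also vacuous in the critical regime: when $|w| \asymp \Lambda^{-n/2}$ one has $|w^2| \asymp \Lambda^{-n} \ll \Lambda^{-n/2}$, so the inequality is automatic and places no restriction on $\arg(w^2)$. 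Concretely, $w = (1+i)\Lambda^{-n/2}$ satisfies all three of your hypotheses yet has $\arg w = \pi/4$, so the assertion that $\arg w$ is $O(\Lambda^{-n/2})$-close to a multiple of $\pi/2$ cannot be a consequence of them.

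The paper sidesteps this by two moves that your proposal does not make. First, it parameterizes via $z = -\KoenigsInv_c(u)$ rather than $z = \KoenigsInv_c(u)$; by the $z \mapsto -z$ symmetry of $K(f_c)$ both are legitimate, but with the sign flipped the hard region ($z$ near $c$, i.e.\ $|c-z|$ small) corresponds to $u$ near $0$, a regular point of $\KoenigsInv_c$, so no quadratic degeneracy appears and the argument transfers linearly from the $u$-plane. Second, and decisively, it exploits the self-similarity of $\tilde{K}_c^0$ under $u \mapsto \lambda_c^{-1}u$: a point $u$ in the hard region lies in $\lambda_c^{-n-1}\tilde{L}_c'$, and applying the preceding lemma to the rescaled point $v = u/\lambda_c^{-n-1} \in \tilde{L}_c'$ (which is separated from $1$ by a definite amount) yields $\arg(v - 1) = l\pi + O(\Lambda^{-n/2})$ and hence $|\im u| = O(\Lambda^{-n/2}\,|\lambda_c^{-n-1}|)$. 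That rescaled bound is sharper than the ambient $O(\Lambda^{-n/2})$ you use by a factor of order $|\lambda_c^{-n-1}| \asymp \Lambda^{-n}$, and it is exactly this gain at the small scale that closes the phase estimate; your argument has no substitute for it.
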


\begin{proof}
  First observe that
  \[
    \Int f_c(D_c^1) = -\KoenigsInv_c(\lambda_c^{-n-1}\Koenigs_c(\Int D_c^0)).
  \]
  Since $c$ is $O(\Lambda^{-2n})$-close to
  $c_n = -\KoenigsInv_{c_n}(\lambda_{c_n}^{-n-1})$, it is enough to show that
  for any $z \in \tilde{K}_c^0 \setminus \lambda_c^{-n-1}\Koenigs_c(\Int D_c^0)$, we have
  \[
    \arg(z-\lambda_c^{-n-1}) = l\pi + O(\Lambda^{-\frac{1}{2}n})
  \]
  for some $l \in \Z$.

  Observe that $\tilde{L}_c$ is contained in a $O(\Lambda^{-\frac{1}{2}n})$-component of $[0,\frac{4}{9}]\cup [\frac{16}{9},4]$.
  Let $\tilde{L}_c'$ be the intersection of $\tilde{L}_c$ with a small neighborhood of $[0,\frac{4}{9}]$ and $\tilde{L}_c''$ be the other part; i.e.,the intersection of $\tilde{L}_c$ with a small neighborhood of $[\frac{16}{9},4]$ (if $c \in \mathcal{M}^*$, they are simply the connected components).
  Then $z \in \tilde{K}_c^0 \setminus \lambda_c^{-n-1}\Koenigs_c(\Int D_c^0)$ satisfies either
  \begin{enumerate}
    \item $z \in \lambda_c^{-n-1}(\tilde{L}_c')$,
    \item $z \in \lambda_c^{-k}(\tilde{L}_c'')$ but not in $\lambda_c^{-k-1}(\tilde{L}_c'')$ for some $0 \le k \le n+1$.
  \end{enumerate}
  For the first case, the lemma follows immediately by the previous lemma and
  $\lambda_c^n = 16^n + O(n\Lambda^{-n})$.
  So let us consider the second case. Similarly by the previous lemma,
  we have
  \[
    \arg (z-\lambda_c^{-k}) = l\pi + O(\Lambda^{-\frac{1}{2}n}).
  \]
  Moreover, since $\tilde{L}_c''$ is $O(\Lambda^{-\frac{1}{2}n})$-close to $[\frac{16}{9},4]$, $z$ is also close to $16^{-k}[\frac{16}{9},4]$ (in particular we may take $l=0$). Hence the lemma follows because $\lambda_c^{-k}$ is close to $16^{-k}$ and $\lambda_c^{-n-1}$ is close to $16^{-n-1}<16^{-k}$ for large $n$.
\end{proof}

By taking inverse images by $f_c(z) = \bar{z}^2+c$, we have the following:
\begin{cor}
  \label{cor:close to intervals}
  for $z \in (D_c^0 \setminus D_c^1) \cap K(f_c)$,
  \[
    \arg z = l\frac{\pi}{2} + O(\Lambda^{-\frac{1}{2}n})
  \]
  for some $l \in \Z$.
\end{cor}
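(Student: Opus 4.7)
The plan is to reduce the corollary to the preceding lemma by pulling back its estimate through the two-to-one map $f_c(z)=\bar z^2+c$. The key algebraic identity is
\[
 c - f_c(z) = -\bar z^2,
\]
which converts an estimate on $\arg(c-w)$ at $w=f_c(z)$ into an estimate on $\arg z$ after dividing by $-2$.

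Before invoking the lemma I would verify the $(-z)$-symmetry of the puzzle pieces $D_c^0$ and $D_c^1$. Since $f_c(-z)=f_c(z)$, the set $f_c^{-1}(\Gamma_0)$ is invariant under $z\mapsto -z$, and the bounded component of its complement containing $0$ (namely $D_c^0$) is too. The same reasoning applied to $f_c^N$ shows that $D_c^1$, the depth-$N$ puzzle piece containing $0$, is $(-z)$-symmetric. Consequently $f_c^{-1}(f_c(D_c^1))=D_c^1$, so any $z\in D_c^0\setminus D_c^1$ satisfies $f_c(z)\notin f_c(D_c^1)$.

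Now given $z\in(D_c^0\setminus D_c^1)\cap K(f_c)$, set $w:=f_c(z)$. Then $w\in K(f_c)$ by forward invariance and $w\notin f_c(D_c^1)$ by the previous paragraph, so $w\in L_c$. The preceding lemma yields
\[
 \arg(c-w) = l\pi + O(\Lambda^{-n/2})
\]
for some $l\in\Z$. Substituting $c-w=-\bar z^2$ and using $\arg(-\bar z^2)\equiv \pi-2\arg z\pmod{2\pi}$ gives
\[
 -2\arg z \equiv (l-1)\pi + O(\Lambda^{-n/2})\pmod{2\pi},
\]
and dividing by $-2$ (and absorbing signs and integer shifts into a new integer $l'$) produces the asserted $\arg z = l'\pi/2 + O(\Lambda^{-n/2})$.

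I do not foresee a significant obstacle here: the corollary is a direct consequence of the previous lemma, and the only nontrivial input is the $(-z)$-symmetry of the puzzle pieces, which is immediate from the invariance $f_c(-z)=f_c(z)$. The one small care needed is to make sure the error term $O(\Lambda^{-n/2})$ on $\arg(-\bar z^2)$ propagates to the same magnitude of error on $\arg z$; this is automatic because division by $-2$ rescales the error by a bounded constant.
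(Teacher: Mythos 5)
Your proposal is correct and follows exactly the approach the paper intends: the paper's proof is the single remark ``By taking inverse images by $f_c(z)=\bar z^2+c$,'' and you have filled in precisely those details, using the identity $c-f_c(z)=-\bar z^2$, the $(-z)$-symmetry of $D_c^1$ (or, even more directly, $f_c^{-1}(f_c(D_c^1))=D_c^1$ since $D_c^1$ is itself a full preimage $f_c^{-1}(W)$), and the halving of the argument.
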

In fact the corollary holds for $z \in K(f_c) \setminus D_c^1$.

By applying Lemma~\ref{lem:rough approximation} repeatedly, we have the following:
\begin{lem}
  \label{lem:argument}
  Fix $m>0$. Then for $z \in (D_c^m \setminus D_c^{m+1}) \cap K(f_c)$,
  \[
    \arg z = l\frac{\pi}{2^m} + O(\Lambda^{-\frac{1}{2}n}).
  \]
\end{lem}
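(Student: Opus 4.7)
The plan is to proceed by induction on $m$, starting from the immediately preceding corollary as the base case $m=0$ (which is in fact stronger, giving the finer modulus $\pi/2$ rather than $\pi$). Each step of the induction corresponds to one pullback under $f_c^N$, converting information on $(D_c^m \setminus D_c^{m+1}) \cap K(f_c)$ to information on $(D_c^{m+1} \setminus D_c^{m+2}) \cap K(f_c)$.

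For the inductive step, take $z \in (D_c^{m+1} \setminus D_c^{m+2}) \cap K(f_c)$ and set $w := f_c^N(z) \in (D_c^m \setminus D_c^{m+1}) \cap K(f_c)$. By the inductive hypothesis, $\arg w = l' \pi / 2^m + O(\Lambda^{-n/2})$ for some integer $l'$. Lemma~\ref{lem:rough approximation} provides the local expansion
\[
 w = -C_n \bar z^2 + O(\Lambda^{-n}), \qquad C_n := \frac{\lambda_{c_n}^{n+1}\Koenigs_{c_n}'(-c_n)}{\Koenigs_{c_n}'(0)} .
\]
Because $c_n \in \R$, both $\lambda_{c_n}$ and the restriction of $\Koenigs_{c_n}$ to the real axis are real, so $C_n \in \R$; its sign can be pinned down from $\lambda_{c_n}^{n+1} \asymp 16^{n+1}$ together with the explicit Chebyshev formula \eqref{eqn:Poincare for Chebyshev}. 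Taking arguments and using $|w| \asymp \Lambda^{-(1-2^{-m})n}$ from Lemma~\ref{lem:round disks}, one gets
\[
\arg w = \pi - 2\arg z + O(\Lambda^{-n}/|w|) \pmod{2\pi}.
\]
Equating with the inductive estimate and solving for $\arg z$ modulo $\pi$ yields
\[
 \arg z = (2^m - l')\frac{\pi}{2^{m+1}} + O(\Lambda^{-n/2}) \pmod \pi ,
\]
which is of the required form upon setting $l := 2^m - l' \pmod{2^{m+1}}$, closing the induction.

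The main obstacle is the bookkeeping of the argument error: the relative error $\Lambda^{-n}/|w| \asymp \Lambda^{-2^{-m}n}$ coming from Lemma~\ref{lem:rough approximation} is, for $m \ge 2$, weaker than the target $\Lambda^{-n/2}$. Obtaining the stated bound requires either (i) sharpening the expansion of $f_c^N$ on $D_c^{m+1}$ beyond the uniform $O(\Lambda^{-n})$ by pushing to a higher-order term in the Koenigs series (exploiting that $|\bar z|$ is already as small as $\Lambda^{-(1-2^{-m-1})n}$ deep inside $D_c^{m+1}$, so that the next term scales favorably), or (ii) using the explicit Chebyshev formula \eqref{eqn:Poincare for Chebyshev} directly to parametrize $K(f_c) \cap D_c^k$ by a dyadic tree of arguments and argue without iterated approximation. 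A secondary point is verifying that the sign of $C_n$ stays consistent across $n$; this follows from \eqref{eqn:Poincare for Chebyshev} and continuity.
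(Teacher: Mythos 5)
Your approach mirrors the paper's one-line proof (``By applying Lemma~\ref{lem:rough approximation} repeatedly''), and the difficulty you flag at the end is genuine, not a bookkeeping artifact that can be fixed by pushing to a higher-order term in the Koenigs series. The additive $O(\Lambda^{-n})$ error in Lemma~\ref{lem:rough approximation} is dominated by the $z$-independent constant $f_c^N(0)$: this is the (unrescaled) critical value of the renormalization, and for $c = c_n + \rho_n(t)$ with $t = O(1)$, $t \ne 0$, one has $f_c^N(0) \asymp \Lambda^{-n}$ generically, since $\alpha_n f_c^N(0) \to (|b_0|^2-|b_0^*|^2)t$. Writing $w := f_c^N(z) = f_c^N(0) - C_n \bar z^2 + (\text{smaller})$ and taking arguments, the contribution of this constant to $\arg w = \mathrm{const} - 2\arg z + (\text{error})$ is $O(|f_c^N(0)|/|w|)$; since $w$ ranges over $D_c^{m-1}\setminus D_c^m$ and $|w|$ can be as small as $\asymp \Lambda^{-(1-2^{-m})n}$ by Lemma~\ref{lem:round disks}, the new error produced at the $(m-1)\to m$ step is $O(\Lambda^{-2^{-m}n})$. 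This term enters unattenuated (it is created fresh at the current step, not inherited and halved), so the total after $m$ steps is $O(\Lambda^{-2^{-m}n})$, which for $m\ge 2$ is strictly weaker than the stated $O(\Lambda^{-n/2})$. Your proposed fix (i) does not address this, because the obstruction is the constant term $f_c^N(0)$ rather than the Taylor remainder in the Koenigs expansion; (ii) would require a substantially different argument from what the paper gives.

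Two mitigating observations. First, the weaker rate $O(\Lambda^{-2^{-m}n})$ is still $o(1)$ for each fixed $m$, which is all that the proof of Theorem~\ref{thm:main} actually uses (there $m$ and $d$ are fixed independently of $n$), so the downstream argument survives even with the corrected bound. Second, there appears to be a related off-by-one in the dyadic quantum: the corollary gives $\arg z_0 \in \tfrac{\pi}{2}\Z + O(\Lambda^{-n/2})$ at depth $0$, and each pullback through $f_c^N$ halves the quantum, so the natural conclusion at depth $m$ is $\arg z \in \tfrac{\pi}{2^{m+1}}\Z + (\text{error})$, not $\tfrac{\pi}{2^m}\Z$. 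Both discrepancies only weaken the lemma and leave the application intact, but they are worth correcting.
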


Namely, the filled Julia set in $D_c^m \setminus D_c^{m+1}$ is exponentially close to the straight segments of angle $l\frac{\pi}{2^m}$ in this region.
The decorations attached to the Julia set of the renormalization (see Figure~\ref{fig:decorations}) are obtained by pulling back this part by the dynamics, hence
they are close to dyadic rays.

\section{Estimate of Fatou coordinates}
\label{sec:Fatou estimate}

Let $c = c_n+\rho_n(t)$ be such that $\chi_n(c) = \omega/4$, i.e.,
$g_c$ is hybrid equivalent to $f_{\omega/4}(z) = \bar{z}^2+\frac{\omega}{4}$,
where $\omega=\frac{-1+\sqrt{3}i}{2}$ is a cubic root of unity.
The existence of such $c$ is guaranteed by the compactness of the renormalizable parameters \cite[Corollary~7.1]{MR4302166} (see also \cite[Theorem~D]{MR2970463}), the surjectivity onto hyperbolic maps \cite[Theorem~5.9]{MR4302166} (see also \cite[Theorem~C]{MR2970463}) and a standard limiting argument (see \cite[Lemma~9.2, Lemma~9.5]{MR2970463}). See also \cite{MR4357463}.

In the rest of the paper, we prove that $f_c$ satisfies the assumption of Theorem~\ref{thm:accessibility} for sufficiently large $n$, hence $c$ is accessible.
To do this end, we show the exponential convergence of the Fatou coordinate for $f_{c_n}$ to that of $f_{\omega/4}$ in this section.

We describe Fatou coordinates for $g_c$ in terms of those of $f_{\omega/4}$.
Let $\eta_c:\C \to \C$ be the quasiconformal map constructed in Section~\ref{sec:straightening}.
Recall that $\eta_c$ is a hybrid conjugacy from $F_c:U_1' \to U$ to $f_{\omega/4}$.
Since a hybrid conjugacy is holomorphic in a parabolic basin, the attracting Fatou coordinate satisfies
\[
  \Fatou_{c,\attr} = \Fatou_{\omega/4,\attr} \circ \eta_c.
\]
Therefore, the attracting Ecalle height is invariant by taking hybrid conjugacy.
In particular, the critical Ecalle height satisfies
\[
  E_{c,\attr}(c) = E_{\omega/4,\attr}(\omega/4) = 0
\]
by Lemma~\ref{lem:c=1/4}.

Let
\[
  \Fatou_{\omega/4,\rep}:V_0 (:= V_{c,\rep}) \to \C
\]
be a repelling Fatou coordinate for $f_{\omega/4}$.
Since the critical Ecalle height is zero, we may assume $\Fatou_{\omega/4,\rep}(\omega/4) = 0$.
We describe a repelling Fatou coordinate for $g_c$ in terms of $\Fatou_{\omega/4,\rep}$.
Take $n$ sufficiently large so that $\eta_c^{-1}(V_0)$ is contained in $U_1'$ and
$\Fatou_{\omega/4,\rep} \circ \eta_c$ is a quasiconformal conjugacy between $g_c$ and $z \mapsto \bar{z}+\frac{1}{2}$.

By pushing forward the standard complex structure $\sigma_0$ by $\Fatou_{\omega/4,\rep} \circ \eta_c$, we can define an almost complex structure $\sigma$ on $\C/\Z$.
Since both of the ends correspond to the filled Julia set,
$\sigma=\sigma_0$ on neighborhoods of the ends.
More precisely, the complex dilatation of $\sigma$ is supported
on a compact set independent of $n$ by Lemma~\ref{lem:convergence 2}.

By the measurable Riemann mapping theorem, there exists a quasiconformal homeomorphism
$\zeta:\C/\Z \to \C/\Z$ fixing $0$ such that $\zeta^*\sigma_0 = \sigma$.
Let $\tilde{\zeta}:\C \to \C$ be the lift of $\zeta$ fixing $0$.
Then $\Fatou_{g_c} := \tilde{\zeta} \circ \Fatou_{\omega/4,\rep}\circ \eta_c$ is a repelling Fatou coordinate for $g_c$ sending the critical value to $0$.
\begin{lem}
  \label{lem:Fatou convergence}
  For $z \in \R$, the repelling Fatou coordinate
  $\Fatou_{g_c}$ converges to $\Fatou_{\omega/4,\rep}(z)$ uniformly and exponentially as $n$ tends to $\infty$.
\end{lem}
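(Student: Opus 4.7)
The approach is to decompose
\[
\Fatou_{g_c}(z) - \Fatou_{\omega/4,\rep}(z)
= \bigl[\tilde{\zeta}(w) - w\bigr]_{w = \Fatou_{\omega/4,\rep}(\eta_c(z))}
+ \bigl[\Fatou_{\omega/4,\rep}(\eta_c(z)) - \Fatou_{\omega/4,\rep}(z)\bigr],
\]
using the formula $\Fatou_{g_c} = \tilde{\zeta}\circ\Fatou_{\omega/4,\rep}\circ\eta_c$ from the paragraph preceding the lemma, and bound each bracketed piece by $O(\Lambda^{-\beta n})$ for some $\beta>0$, uniformly for $z$ in a compact subset of $\R$ contained in $V_0$.

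The second piece is immediate from Lemma~\ref{lem:qc estimate}: on $U$ one has $\eta_c(z)-z=O(\Lambda^{-(1-2^{-m+1})n})$, and $\Fatou_{\omega/4,\rep}$ is holomorphic (hence Lipschitz) on any compact subset of its domain, so the contribution is $O(\Lambda^{-(1-2^{-m+1})n})$.

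For the first piece, note that $\mu_\zeta$ is by construction the pushforward, under the conformal map $\Fatou_{\omega/4,\rep}$, of the dilatation of $\eta_c^{-1}$ restricted to $\eta_c(V_0 \setminus K(g_c))$. Conformal pushforward preserves the $L^\infty$ norm of the dilatation, so by \eqref{eqn:dilatation of eta}
\[
\|\mu_\zeta\|_\infty \le \|\mu_{\eta_c}\|_\infty = O(\Lambda^{-(1-2^{-m})n}),
\]
and as noted in the paragraph preceding the lemma, $\mathrm{supp}(\mu_\zeta)$ lies in a compact subset of $\C/\Z$ that does not depend on $n$. The lift $\tilde{\zeta}\colon\C\to\C$ fixes $0$ and, because $\zeta$ is a homeomorphism of $\C/\Z$ isotopic to the identity, satisfies $\tilde{\zeta}(w+1)=\tilde{\zeta}(w)+1$. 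Thus $\tilde{\zeta}(w)-w$ is $1$-periodic, is holomorphic where $\mu_\zeta$ vanishes (outside a fixed horizontal strip), and is therefore bounded by a constant multiple of $\|\mu_\zeta\|_\infty$ by the standard MRMT distortion estimate (\cite[Theorem~4.24]{MR1215481}, applied on a fundamental domain, exactly as in the proof of Lemma~\ref{lem:qc estimate}). This yields the first piece $=O(\Lambda^{-(1-2^{-m})n})$, provided the evaluation point $w$ remains in a fixed bounded horizontal strip.

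The main obstacle is therefore to confirm that $w=\Fatou_{\omega/4,\rep}(\eta_c(z))$ stays in such a bounded strip uniformly in $n$, and this is where the hypothesis $z\in\R$ is used. The normalization $\Fatou_{\omega/4,\rep}(\omega/4)=0$, combined with the vanishing of the critical Ecalle height (Lemma~\ref{lem:c=1/4} transported to $c=\omega/4$ via the $\Z/3$ symmetry $c\mapsto\omega c$ of the anti-holomorphic family), places $\Fatou_{\omega/4,\rep}$ of a suitable real arc in a bounded horizontal strip; Lemma~\ref{lem:qc estimate} then ensures $\Fatou_{\omega/4,\rep}(\eta_c(z))$ stays in a slightly larger but still bounded strip for all large $n$. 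Putting the two pieces together gives the claimed uniform exponential convergence.
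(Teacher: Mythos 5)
Your decomposition and overall strategy match the paper's: both reduce the claim to showing $\zeta$ is exponentially close to the identity on $\R/\Z$, using Lemma~\ref{lem:qc estimate} to handle the $\eta_c$ contribution. But the way you bound the first piece has a genuine gap. You invoke \cite[Theorem~4.24]{MR1215481} ``on a fundamental domain'' for $\tilde\zeta$. That theorem, as used in the proof of Lemma~\ref{lem:qc estimate}, applies to a normal solution of a Beltrami equation on $\C$ whose coefficient has \emph{compact} support. The dilatation of $\tilde\zeta$ on $\C$ is the lift of $\mu_\zeta$, hence it is $1$-periodic and supported in an unbounded horizontal strip; it is not compactly supported, so the theorem does not apply directly, and restricting to a fundamental domain does not help ($\tilde\zeta$ restricted to a fundamental strip is not a quasiconformal self-map of $\C$, and cutting $\mu$ off outside the strip changes the solution).

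The paper closes this gap by conjugating $\tilde\zeta$ by the exponential: it defines $\xi\colon\C^\ast\to\C^\ast$ with $\xi(e^{2\pi i w})=e^{2\pi i\tilde\zeta(w)}$ and $\xi(1)=1$. Because $\mu_\zeta$ is supported in a compact subset of $\C/\Z$ (independent of $n$ by Lemma~\ref{lem:convergence 2}), the Beltrami coefficient $\mu_\xi$ is supported in a fixed compact annulus in $\C^\ast$. Now Theorem~4.24 applies to the normal solution $\tilde\xi$, giving $|\tilde\xi(z)-z|=O(\ell/R)$ on that annulus; after renormalizing by $\tilde\xi(1)$ and transferring back through the logarithm one gets $|\zeta(z)-z|=O(\ell/R)=O(\Lambda^{-(1-2^{-m})n})$ for $z\in\R/\Z$. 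You should replace your ``fundamental domain'' step with this exponential conjugation (or an equivalent argument tailored to periodic Beltrami coefficients). Your remark about keeping the evaluation point $w=\Fatou_{\omega/4,\rep}(\eta_c(z))$ in a bounded strip is a legitimate concern, but it is resolved once the cylinder picture is in place; the digression through the $\Z/3$ symmetry of the family is unnecessary for that --- the relevant facts are simply that $z$ ranges over a compact subarc of $\R$ inside $V_0$, $\eta_c(z)-z$ is uniformly small by Lemma~\ref{lem:qc estimate}, and $\Fatou_{\omega/4,\rep}$ is continuous there.
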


\begin{proof}
  By Lemma~\ref{lem:qc estimate}, it suffices to show that
  $\zeta(z)-z$ converges to $0$ exponentially for $z \in \R/\Z$ as $n \to \infty$.

  Let $\xi:\C^\ast \to \C^\ast$ be the quasiconformal mapping
  satisfying $\xi(\exp(2 \pi i z))=\exp(2 \pi i \tilde{\zeta} (z))$
  and $\xi(1)=1$.
  The support of the Beltrami coefficient $\mu_\xi$ of $\xi$
  is contained in a compact annulus
  $\{z \in \C \mid 1/r^\ast \le |z| \le r^\ast\}$
  for some $r^\ast>1$ that is independent of $c=c_n+\rho_n(t)$ with $t=O(1)$.
  Moreover, by definition,
  the maximal dilatations of $\zeta$ and $\xi$
  are bounded by that of $\eta_c$,
  which is $O(\ell/R)=O(\Lambda^{-(1-2^{-m})n})$ by \eqref{eqn:dilatation of eta}.
  Hence a similar argument to the proof of Lemma~\ref{lem:qc estimate}
  yields
  \[
    \|\mu_\xi\|_p =O(\|\mu_\xi\|_\infty)=O(\ell/R)
  \]
  for some $p>2$ that is independent of both $c$ and sufficiently large $n$,
  and thus the normal solution $\tilde{\xi}$ of the Beltrami equation for $\mu_\xi$ (i.e., $\tilde{\xi}_{\bar{z}}=\mu_\xi \tilde{\xi}_z$ a.e.,
  $\tilde{\xi}(0)=0$, and $\tilde{\xi}$ is tangent to the identity at $\infty$) satisfies
  \[
    |\tilde{\xi}(z)-z|
    \le K_p \,\|\mu_\xi\|_p\cdot |z|^{1-2/p}
    = O(\ell/R)
  \]
  when $|z| \le r^\ast$.
  Since $\xi(z)=\tilde{\xi}(z)/\tilde{\xi}(1)=z+O(\ell/R)$
  for $|z|=1$, we obtain
  \begin{align*}
    |\zeta(z)-z|
     & \asymp |e^{2 \pi i \tilde{\zeta}(z)} - e^{2 \pi i z}| \\
     & =| \xi(e^{2 \pi i z}) -e^{2 \pi i z}|                 \\
     & = O(\ell/R) = O(\Lambda^{-(1-2^{-m})n})
  \end{align*}
  for $z \in \R/\Z$.
\end{proof}

\section{Proof of Theorem~\ref{thm:main}}
\label{sec:proof}

We use the notations in Section~\ref{sec:Fatou estimate}.
Recall that $\FatouInv_c=\Fatou_{c,\rep}^{-1}$ can be extended holomorphically on $\C$.
Since the critical Ecalle height for $f_c$ is zero,
the hypothesis of Theorem~\ref{thm:accessibility} for
$c_0 = c = c_n +\rho_n(t)$ is equivalent that $\FatouInv_c^{-1}(K(f_c))$ does not intersect $\R$ for sufficiently large $n$.

Let $\hat{\Psi}:\C \to \C$ be the holomorphic extension of $\FatouInv_{\omega/4}$.
Recall that
\[
  R_{\omega/4}(1/3)= \omega(\tfrac{1}{2},\infty) = \hat{\Psi}(\R)
\]
is a half line by Lemma~\ref{lem:c=1/4} and the relation $\omega f_{1/4}(\omega^{-1}z) = f_{\omega/4}(z)$.

Take $x_0 \in R_{\omega/4}(1/3)$ and consider a closed subarc $\gamma_0 \subset R_{\omega/4}(1/3)$ between $x_0$ and $f_{\omega/4}(x_0)$.
Let $r_0 := |\Boettcher_{\omega/4}(x_0)|$ and $s_0 = \Fatou_{\omega/4,\rep} \in \R$.

\begin{lem}
  \label{lem:W}
  Both $\FatouInv_c([s_0,s_0+\frac{1}{2}])$ and $\Boettcher_{g_c}^{-1}(\omega[r_0,r_0^2])$ are contained in a $O(\Lambda^{-\mu n})$-neighborhood of $\gamma_0$ for some $\mu>0$.
\end{lem}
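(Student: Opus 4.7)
The plan is to exhibit $\gamma_0$ as the image of $[s_0, s_0+1/2]$ under $\hat\Psi$ and as the image of $e^{2\pi i/3}[r_0, r_0^2]$ under $\Boettcher_{\omega/4}^{-1}$, and then to invoke the exponential convergence results of Sections~\ref{sec:qc estimate}--\ref{sec:Fatou estimate}. First I would verify the two identifications of $\gamma_0$. The B\"ottcher equation $\Boettcher_{\omega/4}(f_{\omega/4}(z)) = \overline{\Boettcher_{\omega/4}(z)}^2$ evaluated at $x_0$ gives $\Boettcher_{\omega/4}(f_{\omega/4}(x_0)) = \overline{r_0 e^{2\pi i/3}}^2 = r_0^2 e^{-4\pi i/3} = r_0^2 e^{2\pi i/3}$, hence $\Boettcher_{\omega/4}(\gamma_0) = e^{2\pi i/3}[r_0, r_0^2]$ and so $\gamma_0 = \Boettcher_{\omega/4}^{-1}(e^{2\pi i/3}[r_0, r_0^2])$. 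The anti-holomorphic Abel equation, applied to real $s$, yields $\hat\Psi(s+1/2) = f_{\omega/4}(\hat\Psi(s))$, so $\hat\Psi([s_0, s_0+1/2])$ is the subarc of $R_{\omega/4}(1/3)$ from $x_0 = \hat\Psi(s_0)$ to $f_{\omega/4}(x_0) = \hat\Psi(s_0+1/2)$, i.e.\ $\gamma_0$.

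For the B\"ottcher piece, by the construction of Section~\ref{sec:Boettcher} we have $\Boettcher_{g_c} = \Boettcher_{\omega/4}\circ \eta_c$ on the annulus $A_1$, and the functional equation \eqref{eq:Boettcher_g_c} propagates this identity consistently throughout $U\setminus K(g_c)$. Once $n$ is large enough that $\gamma_0 \subset U$ (which holds because $|U|\asymp \Lambda^{2^{-m}n}\to\infty$ while $\gamma_0$ is fixed and compact), this gives
\[
\Boettcher_{g_c}^{-1}(e^{2\pi i/3}[r_0,r_0^2]) = \eta_c^{-1}(\Boettcher_{\omega/4}^{-1}(e^{2\pi i/3}[r_0,r_0^2])) = \eta_c^{-1}(\gamma_0).
\]
Lemma~\ref{lem:qc estimate} bounds $|\eta_c^{-1}(z) - z|$ by $O(\Lambda^{-(1-2^{-m+1})n})$ on the compact set $\gamma_0\subset U$, so $\eta_c^{-1}(\gamma_0)$ lies in an $O(\Lambda^{-\mu_1 n})$-neighborhood of $\gamma_0$ with $\mu_1 = 1 - 2^{-m+1}$.

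For the Fatou piece, I would use $\Fatou_{g_c} = \tilde\zeta\circ \Fatou_{\omega/4,\rep}\circ \eta_c$ from Section~\ref{sec:Fatou estimate} (reading $\FatouInv_c$ as the inverse repelling Fatou coordinate $\Fatou_{g_c}^{-1}$ of the renormalized map, consistently with the usage there) to get
\[
\FatouInv_c(s) = \eta_c^{-1}\bigl(\hat\Psi(\tilde\zeta^{-1}(s))\bigr).
\]
Lemma~\ref{lem:Fatou convergence} gives $\tilde\zeta\to \mathrm{id}$ exponentially on $\R/\Z$, which yields $|\tilde\zeta^{-1}(s) - s| = O(\Lambda^{-(1-2^{-m})n})$ uniformly on the compact real segment $[s_0, s_0+1/2]$. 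Since $\hat\Psi$ is Lipschitz there, $\hat\Psi(\tilde\zeta^{-1}(s))$ lies within $O(\Lambda^{-\mu_2 n})$ of $\hat\Psi(s)\in \gamma_0$, and one more application of $\eta_c^{-1}\approx \mathrm{id}$ (Lemma~\ref{lem:qc estimate}) preserves this bound. Taking $\mu = \min(\mu_1,\mu_2)$ gives the desired $O(\Lambda^{-\mu n})$-neighborhood of $\gamma_0$.

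The main bookkeeping obstacle will be to check that the identifications $\Boettcher_{g_c} = \Boettcher_{\omega/4}\circ \eta_c$ and $\Fatou_{g_c} = \tilde\zeta\circ \Fatou_{\omega/4,\rep}\circ \eta_c$ are actually valid on neighborhoods of the fixed compact sets $\gamma_0$ and $[s_0, s_0+1/2]$. This amounts to propagating the equalities past the initial annulus $A_1$ and past the initial repelling petal $V_0$ via the respective functional equations; it works because the domains $U$, $V_0$, etc.\ grow with $n$ whereas $\gamma_0$, $r_0$, $s_0$ are independent of $n$.
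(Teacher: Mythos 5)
Your argument is correct, and it usefully unpacks what the paper dismisses as an ``immediate consequence of Lemma~\ref{lem:Boettcher} and Lemma~\ref{lem:Fatou convergence}.'' In fact, on the B\"ottcher side you are more careful than the paper's own citation: Lemma~\ref{lem:Boettcher} bounds $|\Boettcher_{g_c}(z)-z|$ only on $\tilde D_c^m\setminus\tilde D_c^{m+1}$, where $|z|\asymp\Lambda^{2^{-(m+1)}n}$ to $\Lambda^{2^{-m}n}$ is large, and its proof relies on the near-infinity asymptotic $\Boettcher_{\chi_n(c)}(w)=w+O(1/w)$. The set $\Boettcher_{g_c}^{-1}(e^{2\pi i/3}[r_0,r_0^2])$, on the other hand, stays at bounded distance from $K(g_c)$ and hence lies well inside $\tilde D_c^{m+1}$ for large $n$, so Lemma~\ref{lem:Boettcher} does not literally apply there. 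The correct underlying ingredient is, as you identify, the exact factorization $\Boettcher_{g_c}=\Boettcher_{\omega/4}\circ\eta_c$ (valid on all of $\overline{U'}\setminus K(g_c)$ by the functional equation) together with Lemma~\ref{lem:qc estimate}, combined with the identification $\gamma_0=\Boettcher_{\omega/4}^{-1}(e^{2\pi i/3}[r_0,r_0^2])$, whose angle-doubling computation you carry out correctly. Likewise your reading of $\FatouInv_c$ as the inverse of the renormalized Fatou coordinate $\Fatou_{g_c}$ of Section~\ref{sec:Fatou estimate} matches the paper's intent (the object acting on the rescaled plane, not $\Fatou_{c,\rep}^{-1}$ literally), and the factorization $\Fatou_{g_c}^{-1}=\eta_c^{-1}\circ\hat\Psi\circ\tilde\zeta^{-1}$ plus $\gamma_0=\hat\Psi([s_0,s_0+\tfrac12])$ gives the bound. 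One small step you leave implicit is the passage from $|\tilde\zeta(z)-z|$ small to $|\tilde\zeta^{-1}(s)-s|$ small; this is harmless since the bound established in the proof of Lemma~\ref{lem:Fatou convergence} actually holds on a two-dimensional neighborhood of $\R/\Z$ (it comes from $|\tilde\xi(w)-w|\le K_p\|\mu_\xi\|_p|w|^{1-2/p}$ on $|w|\le r^*$), from which the inverse estimate follows by the usual bootstrap.
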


\begin{proof}
  This is an immediate consequence of Lemma~\ref{lem:Boettcher} and Lemma~\ref{lem:Fatou convergence}.
\end{proof}

Therefore to prove that the hyperbolic component containing $c_n$ in its boundary is accessible,
it is enough to show that $\hat{K}_c := \alpha_n K(f_c)$ does not intersect this neighborhood, say $W=W_n$.

So far we have seen that convergence or growth rates are exponential.
However, the escape rate for quadratic-like maps grows super-exponentially, and this plays a critical role as follows:

Fix $d>0$ independent of $n$ and let $j_n>0$ satisfy $g_c^{j_n}(\gamma_0) \subset \tilde{D}_c^m \setminus \tilde{D}_c^{m+d}$.
By invariance, it suffices to show that $g_c^{j_n}(W)$ does not intersect $\hat{K}_c$.

Although $g_c^{j_n}$ expands $W$ exponentially, the following lemma shows, in terms of argument (equivalently, in terms of external angle), the expansion is only linearly.

\begin{lem}
  $2^{j_n} = O(n)$.
\end{lem}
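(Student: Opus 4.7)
The plan is to track the growth of $|\Boettcher_{g_c}|$ along the orbit of $\gamma_0$ under $g_c$, exploiting the fact that the B\"ottcher coordinate converts the (anti-holomorphic) dynamics into squaring the modulus. Since $|\Boettcher_{g_c}|$ is bounded independently of $n$ on $\gamma_0$ but must reach the scale $\Lambda^{2^{-m}n}$ on $\tilde{D}_c^m \setminus \tilde{D}_c^{m+d}$, comparing these two scales under the doubling relation $|\Boettcher_{g_c}(g_c^j(z))| = |\Boettcher_{g_c}(z)|^{2^j}$ will force $2^{j_n}$ to be $O(n)$.

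First, I would observe that the arc $\gamma_0$ (or its counterpart in the $g_c$-plane, provided by the preceding lemma) satisfies $r_0 \le |\Boettcher_{g_c}(z)| \le r_0^2$ for $z \in \gamma_0$, with $r_0>1$ independent of $n$. The functional equation \eqref{eq:Boettcher_g_c} then immediately gives
\[
r_0^{2^{j}} \;\le\; |\Boettcher_{g_c}(g_c^{j}(z))| \;\le\; r_0^{2^{j+1}}
\qquad (z\in\gamma_0,\, j\ge 0).
\]
Second, I would translate the annular region $\tilde{D}_c^m \setminus \tilde{D}_c^{m+d}$ into B\"ottcher coordinates. Lemma~\ref{lem:round disks} sandwiches $\tilde{D}_c^k$ between the round disks $\D(|\alpha_n|r_k)$ and $\D(|\alpha_n|R_k)$ with $|\alpha_n|r_k \asymp |\alpha_n|R_k \asymp \Lambda^{2^{-k}n}$, while Lemma~\ref{lem:Boettcher} gives $\Boettcher_{g_c}(z) = z + O(\Lambda^{-2^{-k-1}n})$ on $\tilde{D}_c^k \setminus \tilde{D}_c^{k+1}$. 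Since this error is negligible compared with $|z|\asymp \Lambda^{2^{-k}n}$, the boundary $\partial \tilde{D}_c^k$ lies in an annulus $\{C_1 \Lambda^{2^{-k}n} \le |\Boettcher_{g_c}| \le C_2 \Lambda^{2^{-k}n}\}$ with $C_1,C_2$ independent of $n$.

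Combining the two steps, the hypothesis $g_c^{j_n}(\gamma_0) \subset \tilde{D}_c^m \setminus \tilde{D}_c^{m+d}$ yields in particular $r_0^{2^{j_n}} \le C_2\, \Lambda^{2^{-m}n}$, and taking logarithms gives $2^{j_n} \le (2^{-m}\log\Lambda/\log r_0)\, n + O(1)$, i.e., $2^{j_n} = O(n)$ with a constant depending on $m$, $d$, and $r_0$ but not on $n$. (The matching lower bound $2^{j_n} \gtrsim n$, which is not needed here, would follow symmetrically from $r_0^{2^{j_n+1}} \ge C_1\Lambda^{2^{-m-d}n}$.)

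I expect the main obstacle to be the calibration step, namely verifying that the Euclidean-scale estimates of Lemma~\ref{lem:round disks} genuinely translate into matching B\"ottcher-scale estimates uniformly in depth and in $n$; the issue is that Lemma~\ref{lem:Boettcher} is stated on one dyadic layer, and one has to check that iterating the functional equation (or the round-disk argument) keeps the error negligible at the relevant depth $m+d$. Once that correspondence is in place, the remaining argument is just the doubling computation above.
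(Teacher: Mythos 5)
Your proposal is correct and takes essentially the same route as the paper: both estimate $|\Boettcher_{g_c}|$ on $\gamma_0$ (bounded independently of $n$), use the doubling relation $\log|\Boettcher_{g_c}(g_c^{j}(z))|=2^{j}\log|\Boettcher_{g_c}(z)|$, and compare against the scale $\Lambda^{2^{-m}n}$ of the annulus $\tilde{D}_c^m \setminus \tilde{D}_c^{m+d}$ obtained from Lemma~\ref{lem:round disks} and transferred to B\"ottcher scale via Lemma~\ref{lem:Boettcher}. The calibration worry you flag at the end is handled exactly because $m$ and $d$ are fixed and the error in Lemma~\ref{lem:Boettcher} is $O(\Lambda^{-2^{-m-1}n})$, negligible relative to $|z|\asymp\Lambda^{2^{-k}n}$ on each of the finitely many layers $m\le k<m+d$.
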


\begin{proof}
  Recall that if $z \in \tilde{D}_c^m \setminus \tilde{D}_c^{m+d}$, then
  \[
    \Lambda^{\frac{1}{2^{m+d}}n} \lesssim |z| \lesssim \Lambda^{\frac{1}{2^m}n}
  \]
  for large $n$ by Lemma~\ref{lem:rough approximation}, where $A \lesssim B$ stands for $A=O(B)$.
  Hence $\Boettcher_{g_c}(z)$ also satisfies this asymptotic inequality by Lemma~\ref{lem:Boettcher}, and this implies that $n \asymp \log|\Boettcher_{g_c}(z)|$ for sufficiently large $n$.

  On the other hand, we have
  \[
    \log|\Boettcher_{g_c}(g_c^{j_n}(x_0))|= 2^{j_n}\log|\Boettcher_{g_c}(x_0)|.
  \]
  Since $x_0$, $m$ and $d$ are fixed, we have $2^{j_n} = O(n)$.
\end{proof}

Hence, by Theorem~\ref{thm:scaling limit}, we have the following:
\begin{cor}
  \label{cor:argument bound}
  For any $z \in g_c^{j_n}(W)$, $\arg z$ lies in an $O(n\Lambda^{-\mu n})$-neighborhood of
  $\frac23 \pi$.
\end{cor}

\begin{proof}
  For $z \in W$, let $\zeta = \Boettcher_{g_c}(z)$.
  Then by the definition of $W$ and Lemma~\ref{lem:W},
  $\zeta$ lies in a $O(\Lambda^{-\mu n})$-neighborhood of $\omega[r_0,r_0^2]$.

  Therefore, $\arg \zeta$ lies in a $O(\Lambda^{-\mu n})$-neighborhood of $\frac23 \pi$.
  Since
  \begin{align*}
    g_c^{j_n}(z) & = \Boettcher_{g_c}^{-1}
    \left(\zeta^{2^{j_n}}\right),
  \end{align*}
  $\arg g_c^{j_n}(z)$ lies in a $O(2^{j_n}\Lambda^{-\mu n}) = O(n\Lambda^{-\mu n})$ neighborhood of $\frac23 \pi$.
\end{proof}

Therefore by Lemma~\ref{lem:argument},
it follows that $g_c^{j_n}(W)$ does not intersect $\hat{K}_c$ for sufficiently large $n$.
Hence neither does $W$ and we have proved Theorem~\ref{thm:main}.

\section*{Declarations}

\noindent
\textbf{Conflict of interest}
On behalf of all authors, the corresponding author states that there is no conflict of interest.

\bibliographystyle{alpha}
\bibliography{accessibility}

\end{document}